\documentclass[11pt, a4paper]{article}
\usepackage{amsmath,amsthm,stackengine}
\usepackage{amsfonts}
\usepackage{mathrsfs}
\usepackage{mathtools}
\usepackage{hyperref}
\usepackage{amssymb}
\usepackage{epsfig}
\usepackage{dsfont}
\usepackage{stmaryrd}
\usepackage{mathtools}
\usepackage[cal=cm]{mathalfa}
\usepackage{float}
\restylefloat{table}
\restylefloat{figure}
\usepackage{graphicx}
\usepackage{bm}
\usepackage[nottoc]{tocbibind}
\usepackage[top=1in,bottom=1in,left=1.25in,right=1.25in]{geometry}
\usepackage{enumitem}
\usepackage{pgfplots}
\usepackage{pgfplotstable}
\usepackage{lipsum}
\usepackage{siunitx}
\usepackage[belowskip=-20pt]{caption} 
\usepackage{tikz}
\usetikzlibrary{arrows,shapes.geometric}
\usetikzlibrary{patterns}
\usepackage{pst-eucl}
\usepackage{multido,fp}
\pgfplotsset{width=7cm,compat=1.8}
\usepackage{caption}
\usepackage{subcaption}
\usepackage{fancyhdr}
\usepackage{bbold}
\hypersetup{hidelinks}
\usepackage[numbers]{natbib} 

\def\Xint#1{\mathchoice
	{\XXint\displaystyle\textstyle{#1}}%
	{\XXint\textstyle\scriptstyle{#1}}%
	{\XXint\scriptstyle\scriptscriptstyle{#1}}%
	{\XXint\scriptscriptstyle\scriptscriptstyle{#1}}%
	\!\int}
\def\XXint#1#2#3{{\setbox0=\hbox{$#1{#2#3}{\int}$ }
		\vcenter{\hbox{$#2#3$ }}\kern-.6\wd0}}

\def\dashint{\Xint-}
\newtheorem{proposition}{Proposition}[section]
\newtheorem{theorem}[proposition]{Theorem}

\newtheorem{example}[proposition]{Example}
\newtheorem*{lemma*}{Lemma}

\newtheorem*{theorem*}{Theorem}
\theoremstyle{definition}
\newtheorem{definition}[proposition]{Definition}

\newtheorem{remark}{Remark}
\numberwithin{equation}{section}

\newcommand{\T}{\mathcal{T}}
\newcommand{\e}{\mathcal{E}}
\newcommand{\tT}{\widehat{\mathcal{T}}}
\newcommand{\eE}{\widehat{\mathcal{E}}}
\newcommand{\p}{\mathcal{P}}
\newcommand{\M}{\mathcal{M}}
\newcommand{\A}{\mathcal{A}}
\newcommand{\cB}{\mathcal{B}}
\newcommand{\cL}{\mathcal{L}}
\newcommand{\pid}{\Pi^\nabla}
\newcommand{\cK}{\mathcal{K}}
\newcommand{\pw}{\mathrm{pw}}
\newcommand{\ba}{\textbf{A}}
\newcommand{\bb}{\textbf{b}}
\newcommand{\bs}{\boldsymbol{\sigma}}
\newcommand{\dv}{\mathrm{div}}
\newcommand{\B}{\mathrm{B}}
\newcommand{\J}{\mathrm{J}}
\newcommand{\s}{\mathrm{s}}
\newcommand{\cS}{\mathrm{S}}
\newcommand{\apx}{\mathrm{apx}}
\newcommand{\pf}{\mathrm{PF}}
\newcommand{\cb}{\mathrm{b}}
\newcommand{\I}{\mathrm{I}}
\newcommand{\F}{\mathrm{F}}
\newcommand{\h}{\mathrm{H}}
\newcommand{\stab}{\mathrm{stab}}
\title{Conforming VEM for general second-order elliptic problems  with rough data on polygonal meshes and its application to a Poisson inverse source problem}
\author{Rekha Khot\thanks{School of Mathematics, Monash University, Clayton, VIC, 3800. Email: Rekha.Khot@monash.edu}\footnotemark[1], \;  Neela Nataraj\thanks{Department of Mathematics, Indian Institute of Technology, Powai, Mumbai, 400076 Email: neela@math.iitb.ac.in, niteshnirania@gmail.com}\footnotemark[2],\; and Nitesh Verma\footnotemark[2]}
\date{}	
\counterwithin{figure}{section}
\counterwithin{table}{section}

\begin{document}
	\maketitle
	\begin{abstract}
This paper focuses  on the analysis of conforming virtual element methods  for general second-order linear elliptic problems with rough source terms and applies it to a Poisson inverse source problem with rough measurements. For the forward problem, when the source term belongs to $H^{-1}(\Omega)$, the  right-hand side for the discrete approximation defined through polynomial projections is not meaningful even for standard conforming virtual element method. The modified discrete scheme in this paper  introduces a novel companion operator in the context of conforming virtual element method and allows data in $H^{-1}(\Omega)$.
This paper has {\it three} main contributions.  The {\it first} contribution is the  design of a conforming companion operator $J$ from the {\it conforming virtual element space}  to the Sobolev space $V:=H^1_0(\Omega)$, a modified virtual element scheme, and  the \textit{a priori} error estimate for the Poisson problem  in the best-approximation form without data oscillations. 
The {\it second} contribution is the extension of the \textit{a priori} analysis to general second-order elliptic problems with source term in $V^*$. The {\it third} contribution is an application of the companion operator in a Poisson inverse source problem when the measurements belong to $V^*$. The Tikhonov's regularization technique regularizes the ill-posed inverse problem, and the conforming virtual element method approximates the regularized problem given a finite measurement data. The inverse problem is also discretised using the conforming virtual element method and error estimates are established. Numerical tests on different polygonal meshes  for  general second-order problems, and for a Poisson inverse source problem with finite measurement data verify the theoretical results.  
\end{abstract}

	\section{Introduction}
	 The finite element method (FEM) is  the most widely used numerical method to solve boundary value problems governed by partial differential equations in applied science and engineering models. Although the standard definition of \textit{finite element in the sense of Ciarlet} \cite{ciarlet1978finite} allows elements having more general shapes, triangles/quadrilaterals in 2D and the higher-dimensional equivalents  are more popular in the literature.   The last decade has witnessed a significant advancement in the development of  discretisation methods that allow polygonal meshes. There are various  such methods like  
polygonal finite element methods (PFEM) \cite{talischi2010polygonal},  virtual element methods (VEM) \cite{beirao2013basic}, hybrid high-order methods (HHO) (\cite{di2019hybrid}, \cite[Chapter~39]{ guermond2021finite}, and references therein) discontinuous Galerkin method (DGFEM) \cite{cangiani2014hp},  hybridisable discontinuous Galerkin method (HDG) \cite{cockburn2009unified}, and so on. 
\par
	 Virtual element method (VEM)  can be regarded as a generalization of the finite element method (FEM) to arbitrary element-geometry. It is one of the well-received polygonal methods, and the main advantages are the mesh flexibility and a common framework for higher approximation orders. Like FEM is a variational formulation of finite difference method, VEM is a variational analogue of the mimetic finite difference method \cite{da2014mimetic}.  The virtual element space is locally a set of solutions to some problem dependent partial differential equation, and  contains polynomials as
well as possibly non-polynomial functions. The key idea of VEM lies in the fact that it
does not demand the explicit construction of complicated shape functions (hence the name virtual) and
the knowledge of degrees of freedom along with suitable projections of virtual element functions onto polynomial
subspace is sufficient to analyse and implement the method.  Since the explicit computation of virtual element functions is not feasible,  the  bilinear forms  comprise of    consistency  and stabilization terms. The consistency
term is built on approximating  discrete functions by computable projections  on the polynomial subspace and the stabilization term is to ensure the
stability of the discrete bilinear form.   VEM has been applied to a wide range of model problems (\cite{MR3507277, beirao2013basic, MR3671497} to name a few) in the last decade.
\par  The conforming VEM for a Poisson model  problem $-\Delta u = f$ that seeks $u\in H^1_0(\Omega)$ is investigated in the very first paper on VEM \cite{beirao2013basic}, and applied to general second-order linear elliptic problems in \cite{MR3460621,MR3671497} for $f\in L^2(\Omega)$. The  companion operators also referred to as {\it smoothers} are introduced in the literature in the context of nonconforming FEM and VEM approximations, and/or when the source is rough \cite{MR3194809,MR2142191,MR4235819,MR3907927}. Ern \emph{et al.} apply HHO method   for the Poisson equation with loads in $H^{-1}(\Omega)$ \cite{MR4167044} and the analysis is based on  a smoother  designed using averaging and bubble functions.  An enrichment  operator from nonconforming to conforming virtual element spaces is constructed in \cite{huang2021medius} for Poisson and biharmonic problems, and a slight variation  of this operator is provided in \cite{adak2022morley} for fourth-order problems. Since conforming VEM functions are also \emph{not computable} through degrees of freedom, such operators  can only be used for the purpose of analysis and not to approximate the rough data.   Carstensen \emph{et al.} design a computable companion operator  in  \cite{carstensen2022nonconforming, MR4444835}  for   the nonconforming VEM that applies to second and fourth-order problems. 
 
\par Even in the conforming case, {\it the virtual element} spaces involve locally polynomials as well as non-polynomial functions and  the discrete problem is defined through computable polynomial projections. Though the discrete space is contained in the continuous space, and the method is referred to as \textit{conforming}  from this perspective,  the discrete problem in VEM involves discontinuous polynomial projections. Hence conforming VEMs are more challenging than conforming FEMs, especially when the source term belongs to $V^*$.  
This motivates us to design a companion operator for the conforming VEM, for the  first time in the literature to the best of our knowledge, to handle  {\it rough source terms  in the discrete scheme}. This paper first discusses conforming VEM for the Poisson problem and then extends it for  general second-order elliptic problems. The difficulties arising from non-symmetric and non-coercive bilinear forms  are addressed successfully.  This analysis is of independent interest to other research problems. 

\par

The inverse source problems  are majorly found in the practical examples like electromagnetic theory and crack determination. There are different types of  inverse problems \cite{MR3524926} like inverse source problems \cite{MR3245124}, parameter identification  problems \cite{MR4399830,nair2017linear} etc., and in this paper we aim to determine the source function  from a finite density field of measurements following the model problem in \cite{MR3245124}.  In general, the main drawback is the ill-posed behavior of these problems, e.g.,  for displacement $u_n = 1/n \sin(n \pi x) \sin(n \pi y)$ on $[0,1]^2$ and for the force  $f_n=2n \pi^2 \sin(n \pi x)\sin(n \pi y)$, $u_n \in H^1(\Omega)$ converges to zero whereas $f_n \notin L^2(\Omega)$ diverges as $n \to \infty$. It is well-known that this can be overcome with the regularization techniques.
\par Huhtala \emph{et al.} \cite{MR3245124} analyse  conforming FEM for a Poisson inverse source problem for measurements in $V^*$, {but the numerical experiments consider only $L^2$ measurements}.  Nair \emph{et al.} \cite{nair2021conforming} investigate conforming and nonconforming FEM for the biharmonic problem with   $L^2$ measurement functionals. The analysis therein for nonconforming FEM doesn't cover the important situation when the source term or the measurements belong to $V^*$. 
\par The analysis of the inverse problem heavily relies on that of the corresponding forward problem and to the best of our knowledge, {\it VEM for inverse problems} has not been studied in the literature.   
Moreover, the measurement functionals belong to $V^*$ and the techniques to analyze VEM heavily depend on a novel conforming companion operator. This motivates us to discuss this problem as an application of the  analysis of  the forward problem with rough right-hand sides. {We numerically investigate  the point measurement example, which is also novel in the  conforming FEM   for a Poisson inverse source problem \cite{MR3245124}.}   The ideas developed in this paper can be extended to the nonconforming FEM \cite{nair2021conforming} for the case of rough measurement functionals. The main contributions of this paper are stated below.
\begin{itemize}
    \item The first part of the paper
    \begin{itemize}
    \item constructs a novel computable companion  from the conforming virtual element space $V_h^k$ to $V=H^1_0(\Omega)$ for the general degree $k\in \mathbb{N}$, 
    \item offers a modified VEM  that introduces companion in the discrete source approximation in comparison to the standard VEM that uses polynomial projection,
    \item analyses the conforming VEM for Poisson problem with rough data in $V^*$ and extends it to general second-order linear elliptic problems,
    \item proves energy and $L^2$ error estimates in the best-approximation form without data oscillations for the choice of a smoother.
    \end{itemize}
    \item The second part of the paper
    
    \begin{itemize}
   \item deals with a Poisson inverse source problem that seeks  $f$ 
   given rough measurements $h_i\in V^*$ of the solution $u$ of the forward problem,
    \item   approximates the regularized solution utilizing the conforming VEM and proves  error estimates.
    \end{itemize}
\end{itemize}
Illustrative  numerical experiments  confirm the theoretical convergence rates for both forward and inverse problems. 

\par The paper is organized as follows: Section~2  introduces  the Poisson problem with a  rough source term. This section presents the admissible meshes and a construction of the companion operator for the conforming VEM, introduces  the standard and modified  discrete schemes, and discusses the error analysis for the forward problem. Section~3 analyses the conforming VEM for general second-order linear elliptic problems with rough data. The error estimates in the energy and $L^2$ norms are presented. Section~4 deals with a Poisson inverse source problem and the regularized solution is approximated using the conforming VEM. The numerical experiments  in Section~5 for both forward and inverse problems show  empirical convergence  rates. 
\par Standard notation on Lebesgue and Sobolev spaces  and norms applies throughout this paper, e.g.,  $\|
	\cdot\|_{s,\cal{D}}$ (resp. seminorm $|\cdot|_{s,\cal{D}}$) for $s\geq0$ denotes norm on the Sobolev space  $H^s(\mathcal{D}):=H^s(\text{int}(\cal{D}))$  of order $s\in\mathbb{R}$ defined in the interior $\text{int}(\mathcal{D})$ of a  domain $\mathcal{D}$, while $(\cdot,\cdot)_{L^2({\cal D})}$ and $\|\cdot\|_{L^2({\cal D})}$  denote the $L^2$ scalar product and $L^2$ norm in  ${\cal D}$. The Euclidean norm of a vector in $\mathbb{R}^N$ for $N\in\mathbb{N}$ is denoted by $\|\cdot\|$. Given a barycenter $x_{\mathcal{D}}$ and diameter $h_{\mathcal{D}}$ of a  domain $\mathcal{D}$, define the set of scaled monomials $\M_k(\mathcal{D})$ of degree less than equal to $k$ and $\M_k^*(\mathcal{D})$ of degree equal to $k$ by
$$\M_k(\mathcal{D})= \Big\lbrace \Big(\frac{x-x_{\mathcal{D}}}{h_{\mathcal{D}}} \Big)^\beta: |\beta| \le k \Big\rbrace, \text{ and } \mathcal{M}^*_k(\mathcal{D})= \Big\lbrace \Big(\frac{x-x_{\mathcal{D}}}{h_{\mathcal{D}}} \Big)^\beta: |\beta| = k \Big\rbrace.$$ Let $\p_k({\cal D})$ denote  the set of polynomials of degree at most $k\in\mathbb{N}_0$ defined on a domain ${\cal D }$ and  $\p_k({\T_h})$ denote the set of piecewise polynomials on an admissible partition $\T_h\in\mathbb{T}$ (defined in Subsection~2.2). The piecewise seminorm and norm  in $H^s(\T_h)$ for $s\in\mathbb{R}$   read
	$|\cdot|_{s,\text{pw}}:=\big(\sum_{P\in\T_h}| \cdot|_{s,P}^2\big)^{1/2}$ and $\|\cdot\|_{s,\text{pw}}:=\big(\sum_{P\in\T_h}\| \cdot\|_{s,P}^2\big)^{1/2}$. The generic constants are denoted in the sequence $C_1,C_2,\dots$ and the constants that depend on standard inequalities are specifically  defined, e.g., the constant $C_\pf$ comes from  Poincar\'e-Friedrichs inequality. 
	\section{Virtual element method for Poisson problem with rough source}
 Let $u\in V:=H^1_0(\Omega)$ solve the Poisson equation $-\Delta u =f$  for a given source field $f\in V^*$ on a polygonal subdomain $\Omega\subset\mathbb{R}^2$ with a boundary $\partial \Omega$. The solution operator $\cK:V^*\to V$ defines the Riesz representation $\cK f\in V$ for a given $f\in V^*$ with
	\begin{align}
	    a(\cK f,v) = f(v)\quad\text{for all}\;v\in V\label{eq:weak-fwd}
	\end{align}
	for an inner product $a(\cdot,\cdot):=(\nabla\cdot,\nabla\cdot)_{L^2(\Omega)}$ on $V\times V$ (with piecewise version denoted by $a_\pw$ throughout the paper).
	\par This section has five subsections.  Subsection~2.1 states  two conditions \ref{M1}-\ref{M2} for  admissible polygonal meshes. Subsection~2.2  introduces the virtual element spaces $V_h^k$ and Subsection~2.3 designs the companion operator $J:V_h^k\to V$ and establishes its properties.  The discrete problem is presented in Subsection~2.4 and the error estimate in energy norm is proved in the best-approximation form in Subsection~2.5.

	\subsection{Polygonal meshes}
	 The virtual element method allows fairly general polygonal meshes. Let $\mathbb{T}$ be a family of decomposition of $\overline{\Omega}$ into polygonal subdomains  satisfying the two mesh conditions \ref{M1}-\ref{M2} with a universal positive constant $\rho$ \cite{beirao2013basic}.
\begin{enumerate}[label={(\bfseries M\arabic*)}] \item\label{M1} \textbf{Admissibility}.  Any two distinct polygonal subdomains $P$ and $P'$ in $\T_h\in\mathbb{T}$ are disjoint or share   a  finite number of edges  and vertices. 
\begin{figure}[H]
		\centering
		\includegraphics[width=0.3\linewidth]{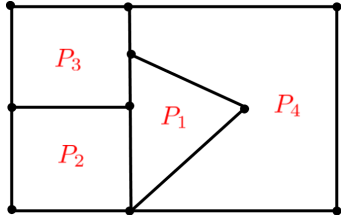}               
	\caption{Decomposition of a rectangular domain into polygonal subdomains $P_1,\dots,P_4$.}
	\label{fig:P}
	\end{figure}
Note that   one of the advantages of polygonal meshes is that hanging nodes are seamlessly incorporated in the mesh and treated as just another vertex of a polygonal subdomain.  Figure~\ref{fig:P} displays an example of a rectangular domain divided into  non-uniform polygonal subdomains and  observe that $P_1$ looks like a triangle but is considered as a quadrilateral, $P_2$ is a rectangle, $P_3$ is a pentagon (looks like a rectangle), and $P_4$ is a hexagon (nonconvex element).
\item \label{M2} \textbf{Mesh regularity}.   Every polygonal subdomain $P$ of diameter $h_P$ is star-shaped with respect to every point of a ball of radius greater than equal to $\rho h_P$ and every edge $E$ of $P$ has a length $h_E$ greater than equal to $\rho h_P$.

\end{enumerate}
Here and throughout this paper,  $h_{\T_h}|_P:=h_P:=\text{diam}(P)$ denotes the piecewise constant mesh-size  $h_{\T_h}\in\p_0(\T_h)$  and $h:=\max_{P\in\T}h_P$ denotes the maximum diameter over all $P\in\T_h\in\mathbb{T}$. Let $\mathcal{V}(P)$ (resp. $\mathcal{V}$) denote the set of vertices  of $P$ (resp. of $\T_h$) and let $\e(P)$ (resp. $\e$) denote the set of edges  of $P$ (resp. of $\T_h$). Denote the interior and boundary edges of $\T_h$ by $\e(\Omega)$ and $\e(\partial\Omega)$.  Let $|\mathcal{V}|$ (resp. $|\e|$) denote the number of vertices (resp. edges) of $\T_h$, and $N_P$ denote the number of vertices of a polygonal subdomain $P$.  
\subsection{Virtual element spaces}\label{sec:vspace}
For any $v\in H^1(P)$, its $H^1$ elliptic projection operator $\pid_k:H^1(P)\to\p_k(P)$ is denoted as $\pid_k v\in\p_k(P)$ and is defined by
\begin{align}
    (\nabla \pid_k v,\nabla \chi_k)_{L^2(P)} = (\nabla v,\nabla\chi_k)_{L^2(P)}\quad\text{for all}\;\chi_k\in\p_k(P)\label{pid}
\end{align}
with an additional condition (to fix the constant) 
\begin{align}
    \frac{1}{N_P}\sum_{j=1}^{N_P}\pid_k v(z_j)& = \frac{1}{N_P}\sum_{j=1}^{N_P} v(z_j) \quad\text{for}\; k=1\;\text{and}\;z_j\in\mathcal{V}(P),\\\dashint_P \pid_k v\,dx &= \dashint_P v\,dx\quad\text{for}\; k\geq 2.
\end{align}
It follows easily from  \eqref{pid} that $\pid_k$ is stable with respect to $H^1$ norm. That is, 
\begin{align}|\pid_k v|_{1,\pw}\leq | v|_{1,\Omega}. \label{pid_stab}\end{align}
Let $\Pi_k$  denote the $L^2$ projection on $\p_k(\T_h)$ for $k\in\mathbb{N}_0$. In other words, for any $v\in L^2(\Omega)$,
\begin{align}(\Pi_kv,\chi_k)_{L^2(\Omega)}=(v,\chi_k)_{L^2(\Omega)}\quad\text{ for all}\quad \chi_k\in \p_k(\T_h).\label{def:L2}\end{align}  An immediate consequence is $L^2$ stability of $\Pi_k$, that is,
\begin{align}
    \|\Pi_k v\|_{L^2(\Omega)}\leq \|v\|_{L^2(\Omega)}.\label{est:L2}
\end{align}
\begin{proposition}[polynomial projection \cite{brenner2008mathematical}] \label{prop:est-polyPi}
	For a sufficiently smooth function $v\in  H^{s}(P)$, for  $1 \le s \le k+1$, and $P \in \T_h$, there exists a positive constant $C_{\mathrm{apx}}$ (that depends exclusively  on $\rho$ from \ref{M2}) such that
	\begin{align*}
	\|v - \pid_k v \|_{d,P} + \|v - \Pi_k v \|_{d,P} \leq C_{\mathrm{apx}} h_P^{s-d}|v|_{s,P}\quad\text{for}\quad 0\leq d\leq s.
	\end{align*} 
\end{proposition}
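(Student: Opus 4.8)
The plan is to reduce the statement to a Bramble--Hilbert/Deny--Lions estimate on the star-shaped element $P$ and then transfer it to the two projections via their defining relations. By \ref{M2}, $P$ is star-shaped with respect to a ball $B$ of radius $\ge\rho h_P$, so the averaged Taylor polynomial $Q^k v\in\p_k(P)$ of $v$ over $B$ (Sobolev integral representation, cf.\ \cite{brenner2008mathematical}) is well defined, and rescaling the Bramble--Hilbert lemma from the reference configuration $h_P^{-1}P$, whose shape is governed only by $\rho$, yields
$$
|v-Q^k v|_{j,P}\le C_1\,h_P^{s-j}\,|v|_{s,P},\qquad 0\le j\le s\le k+1,
$$
with $C_1=C_1(\rho,k)$. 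Everything else is a comparison of $\pid_k v$ and $\Pi_k v$ with $Q^k v$, using two ingredients whose constants also depend only on $\rho$ and $k$: the inverse inequality $\|p\|_{j,P}\le C\,h_P^{-j}\|p\|_{0,P}$ on $\p_k(P)$ (valid on shape-regular polygons, using the edge condition in \ref{M2}), and the Poincar\'e--Friedrichs inequality $\|w-\dashint_P w\|_{0,P}\le C_\pf h_P|w|_{1,P}$ on $P$.

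For $\Pi_k$ one uses $L^2$-optimality from \eqref{def:L2}: $\|v-\Pi_k v\|_{0,P}\le\|v-Q^k v\|_{0,P}\le C_1 h_P^{s}|v|_{s,P}$ settles $d=0$, and for $1\le d\le s$ the splitting $v-\Pi_k v=(v-Q^k v)+(Q^k v-\Pi_k v)$, the inverse inequality applied to the polynomial $Q^k v-\Pi_k v$, and once more $L^2$-optimality give $\|Q^k v-\Pi_k v\|_{d,P}\le C\,h_P^{-d}\big(\|Q^k v-v\|_{0,P}+\|v-\Pi_k v\|_{0,P}\big)\le C\,h_P^{s-d}|v|_{s,P}$; summing the seminorms $0\le j\le d$ gives the bound for $\Pi_k$.

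For $\pid_k$ one exploits that $\pid_k$ reproduces $\p_k(P)$ and is $H^1$-seminorm stable, the local form of \eqref{pid_stab} being immediate from \eqref{pid}. Writing $w:=v-Q^k v$, one has $v-\pid_k v=w-\pid_k w$, hence $|v-\pid_k v|_{1,P}\le 2|w|_{1,P}\le 2C_1 h_P^{s-1}|v|_{s,P}$; for $d=0$ the normalization fixes $\dashint_P\pid_k w=\dashint_P w$ when $k\ge2$ (the vertex average when $k=1$), so Poincar\'e--Friedrichs and Cauchy--Schwarz yield $\|\pid_k w\|_{0,P}\le\|w\|_{0,P}+C_\pf h_P|\pid_k w|_{1,P}\le C\,h_P^{s}|v|_{s,P}$, whence $\|v-\pid_k v\|_{0,P}\le\|w\|_{0,P}+\|\pid_k w\|_{0,P}\le C\,h_P^{s}|v|_{s,P}$; for $2\le j\le s$ the inverse inequality on $\pid_k w\in\p_k(P)$ together with the two previous bounds gives $|v-\pid_k v|_{j,P}\le|w|_{j,P}+\|\pid_k w\|_{j,P}\le C\,h_P^{s-j}|v|_{s,P}$. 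Collecting $0\le j\le d$ finishes the argument.

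The point requiring care is not any single estimate but the uniformity: the Bramble--Hilbert constant, the inverse-inequality constant, and $C_\pf$ must all be traced back to $\rho$ (and the fixed $k$), which is precisely what the star-shapedness and minimal-edge-length conditions in \ref{M2} provide after the $h_P^{-1}$ rescaling. A secondary, purely bookkeeping, issue is the $k=1$ normalization: the vertex average is meaningful for the functions to which $\pid_1$ is later applied, and a discrete Poincar\'e inequality on $\p_1(P)$ --- again with a $\rho$-dependent constant by \ref{M2} --- replaces the integral one in the $d=0$ step.
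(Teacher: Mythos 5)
Your argument is correct and is essentially the standard proof that the paper delegates to the cited reference \cite{brenner2008mathematical}: Bramble--Hilbert via the averaged Taylor polynomial on the star-shaped element, combined with $L^2$- resp.\ $H^1$-optimality of $\Pi_k$ and $\pid_k$, polynomial inverse estimates, and the Poincar\'e--Friedrichs inequality, with all constants traced back to $\rho$ from \ref{M2}. Your closing caveat about the $k=1$ vertex-average normalization is exactly what the paper's phrase ``sufficiently smooth'' is meant to absorb, so nothing is missing.
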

 The local  conforming virtual element space is defined as a set of solutions to  the Poisson equation with Dirichlet boundary condition, and an enhanced space is constructed with an additional orthogonality condition in the original conforming virtual element space so that the $L^2$ projection $\Pi_k$ is computable cf. \cite{MR3073346}. In particular, the local enhanced virtual element space $V_h^k(P)$ is
\begin{align}
    V_h^k(P):=\begin{rcases}
    \begin{dcases}
    &v_h\in H^1(P): \Delta v_h\in\p_{k}(P), \; v_h|_E\in\p_k(E)\quad\text{for all}\; E\in\e(P),\\& v_h\in C^0(\partial P),\; (v_h-\pid_kv_h,\chi)_{L^2(P)}=0\;\text{for all}\;\chi\in\mathcal{M}^*_{k-1}(P)\cup\mathcal{M}^*_{k}(P)\label{local_sp}
    \end{dcases}
    \end{rcases}.
\end{align}
\begin{remark}[lowest-order case $k=1$]
The local virtual element space \eqref{local_sp} for $k=1$ reduces to
\begin{align*}
    V_h^1(P):=\begin{rcases}
    \begin{dcases}
    &v_h\in H^1(P): \Delta v_h\in\p_{1}(P), \; v_h|_E\in\p_1(E)\quad\text{for all}\; E\in\e(P),\\& v_h\in C^0(\partial P),\;(v_h-\pid_1v_h,\chi)_{L^2(P)}=0\quad\text{for all}\;\chi\in\mathcal{M}_{1}(P)
    \end{dcases}
    \end{rcases}.
\end{align*}
The orthogonality condition in $V_h^1(P)$ shows that $\pid_1=\Pi_1$. The freedom of $\Delta v_h$ to be any linear polynomial in $P$ is suppressed by the orthogonality condition, and hence the dimension of the space is $N_P$. If $P$ is a triangle, then it coincides with the Lagrange finite element space.\end{remark}
\noindent The local degrees of freedom (dofs) for $v_h\in V_h^k(P)$ are
\begin{itemize}
	\item values of $v_h$ at the vertices of $P$,
	\vspace{-0.25cm}
	\item values of $v_h$ at the $k-1$ interior Gauss-Lobatto points on each edge $E \in \partial P$,
	\vspace{-0.25cm}
	\item  
	$\dashint_{P} v_h \;\chi_{k-2}\; dx \quad \text{for all} \; \chi_{k-2} \in \M_{k-2}(P).$
\end{itemize} 
\begin{figure}[H]
	\centering
	\begin{subfigure}{.33\textwidth}
		\centering
		\includegraphics[width=0.7\linewidth]{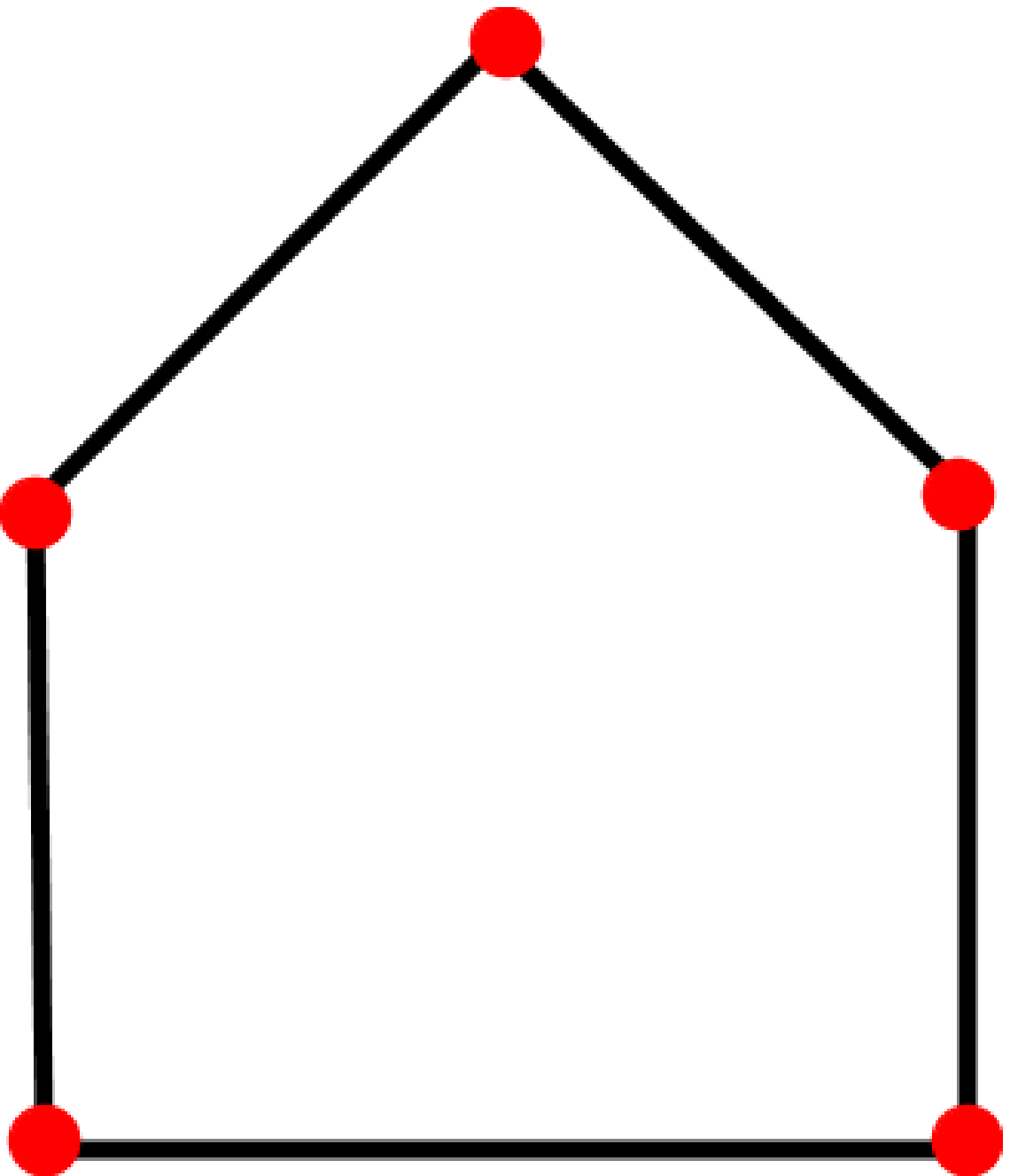}
	\end{subfigure}%
	\begin{subfigure}{.33\textwidth}
		\centering
		\includegraphics[width=0.7\linewidth]{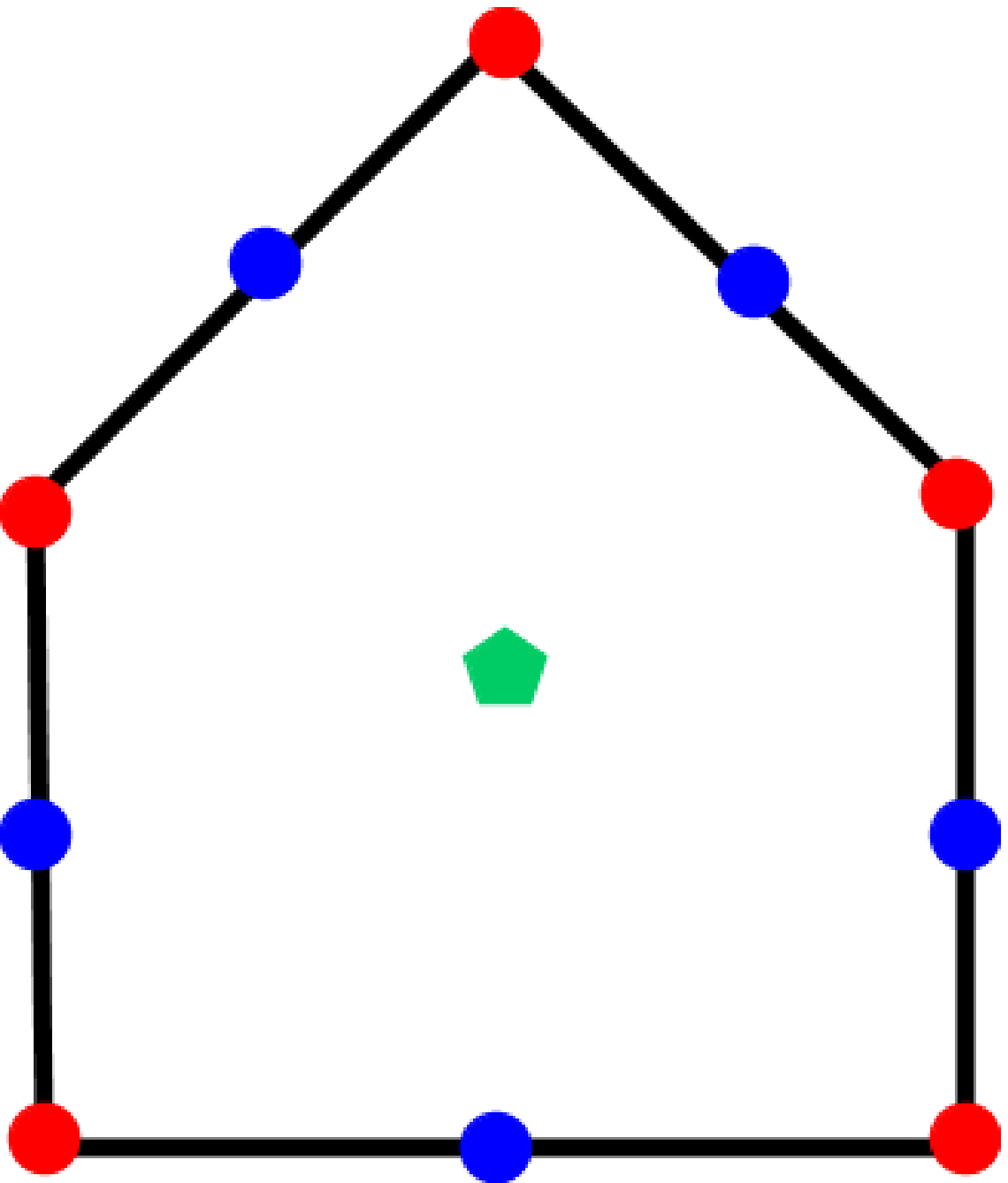}
	\end{subfigure}%
	\begin{subfigure}{.33\textwidth}
		\centering
		\includegraphics[width=0.7\linewidth]{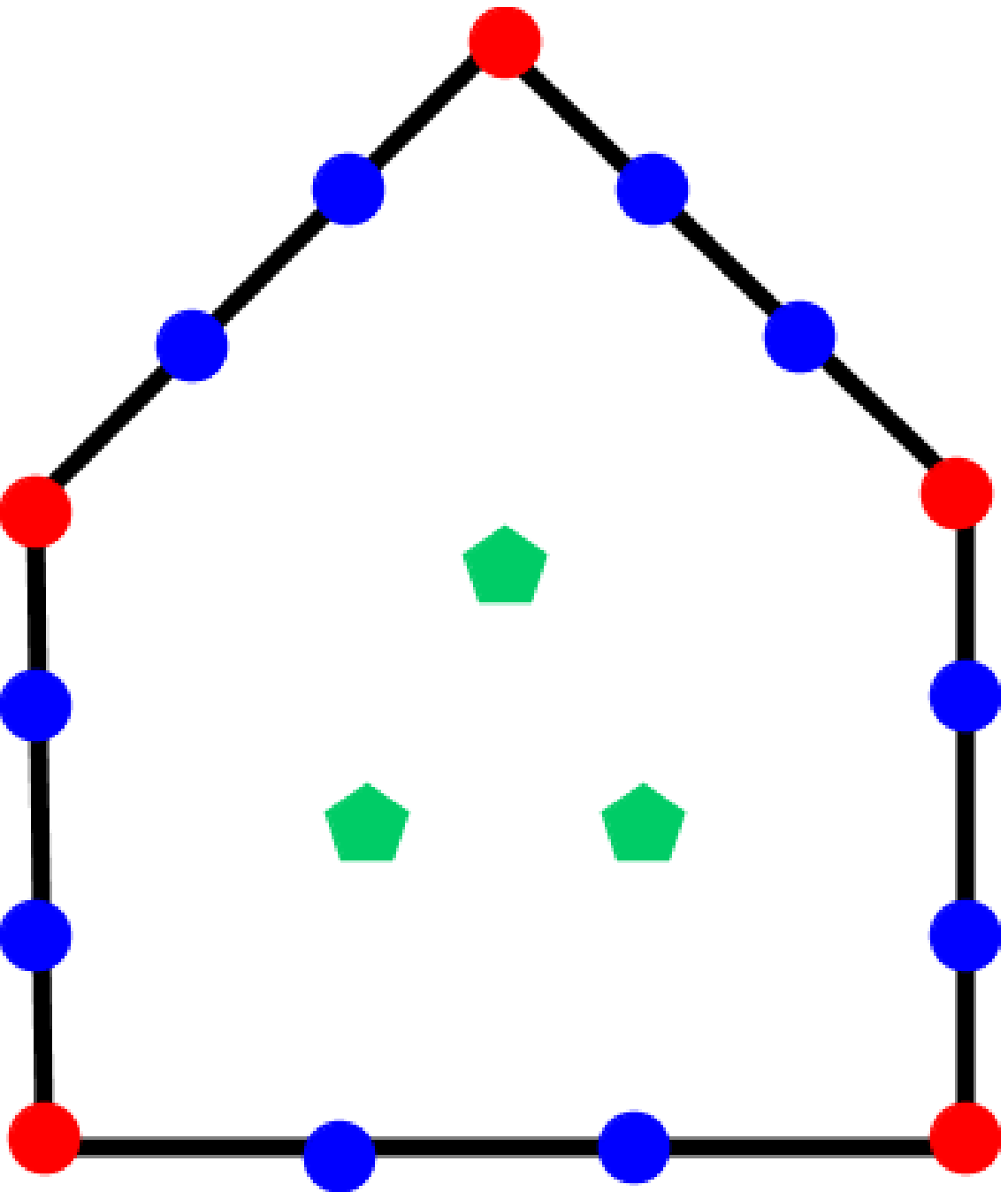}
	\end{subfigure}
	\caption{The degrees of freedom for $k=1, 2,$ and 3.}
	\label{fig1}
\end{figure}
We enumerate the above mentioned dofs that are linear functionals as $\text{dof}_1,\dots,\text{dof}_{N_{\text{dof}}}$ for $N_{\text{dof}}:=kN_P+\frac{k(k-1)}{2}$ and  the triplet $(P, V_h^k(P), (\text{dof}_1,\dots,\text{dof}_{N_{\text{dof}}}))$ forms a finite element in the sense of Ciarlet \cite{MR3073346}.   The global virtual element space is defined as
\begin{align*}
    V_h^k :=\{v_h\in H^1_0(\Omega): \forall P\in\T_h \quad v_h\in V_h^k(P)\}.
\end{align*}
\begin{definition}[Interpolation operator]
    Let $\varphi_1,\dots,\varphi_{N_{\text{dof}}}$ be the nodal basis functions of $V_h^k$.  Given $v\in V$, define its interpolation $I_hv\in V_h^k$ by
    \[I_hv:=\sum_{j=1}^{N_{\text{dof}}}\text{dof}_j(v)\varphi_j.\]
\end{definition}
\noindent For any $\chi_k\in\p_k(P)$ and any $P\in\T_h$, this and  an integration by parts show
\begin{align*}
    (\nabla(v-I_hv),\nabla\chi_k)_{L^2(P)}= (v-I_hv,\Delta\chi_k)_{L^2(P)} +(v-I_hv,\partial_\textbf{n}\chi_k)_{L^2(\partial P)}=0.
\end{align*}
In other words,  
\begin{align}
\nabla(v-I_hv)\perp(\p_{k-1}(\T_h))^2\quad\text{in}\;(L^2(\Omega))^2.\label{int:ortho}
\end{align}
\begin{proposition}[Interpolation estimates \cite{MR3671497}] \label{prop:est-inter-forward}
	For every $v\in V \cap H^{s}(\Omega)$, the interpolant $I_hv \in V_h^k$ of $v$ satisfies
	\begin{align*}
	\| v - I_hv\|_{d,\Omega} \le C_\I h^{s-d} |v|_{s,\Omega} \qquad \text{for } 0\leq d\leq s\;\text{and}\; 1\leq s \le k+1.
	\end{align*}
\end{proposition}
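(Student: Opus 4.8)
The statement is the standard virtual–element interpolation estimate, and the proof follows the classical \emph{localise – preserve polynomials – Bramble–Hilbert} scheme, adapted to the nonpolynomial space $V_h^k(P)$. First I would localise: since $\|\cdot\|_{d,\Omega}$ and $|\cdot|_{s,\Omega}$ split elementwise and $(I_hv)|_P$ depends only on the degrees of freedom attached to $P$, it suffices to prove the local bound $\|v-I_h^Pv\|_{d,P}\le C\,h_P^{s-d}|v|_{s,P}$ for each $P\in\T_h$, with $C$ depending only on $k,d,s$ and the regularity constant $\rho$ from \ref{M2}; here $I_h^P:=(I_h\,\cdot\,)|_P$. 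The first ingredient is that $I_h^P$ reproduces polynomials: one checks $\p_k(P)\subseteq V_h^k(P)$ — for $\chi\in\p_k(P)$ one has $\Delta\chi\in\p_{k-2}(P)\subseteq\p_k(P)$, $\chi|_E\in\p_k(E)$, $\chi\in C^0(\partial P)$, and, since $\pid_k$ reproduces $\p_k(P)$, the enhancement condition $(\chi-\pid_k\chi,\cdot)_{L^2(P)}=0$ holds trivially — and, as $(P,V_h^k(P),(\text{dof}_j))$ is unisolvent, $I_h^P$ is the identity on $V_h^k(P)$; hence $I_h^P\chi=\chi$ for all $\chi\in\p_k(P)$.

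The decisive step is a scaled stability bound for $I_h^P$ on $H^s(P)$ with $s>1$. The standard virtual–element norm equivalence on $V_h^k(P)$ — a consequence of the defining properties of $V_h^k(P)$ together with scaled trace and inverse inequalities that are uniform under \ref{M2} — gives, for $z_h\in V_h^k(P)$,
\[
\|z_h\|_{0,P}^2+h_P^2|z_h|_{1,P}^2\;\lesssim\; h_P^2\sum_i|z_h(\xi_i)|^2+|P|\sum_\alpha\Big|\dashint_P z_h\,m_\alpha\,dx\Big|^2,
\]
where $\xi_i$ ranges over the vertex and edge Gauss–Lobatto nodes and $m_\alpha$ over $\M_{k-2}(P)$. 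Applying this to $z_h=I_h^Pw$ and using $\text{dof}_j(I_h^Pw)=\text{dof}_j(w)$, the point values are controlled by $\|w\|_{L^\infty(\overline P)}$, finite precisely because $H^s(P)\hookrightarrow C(\overline P)$ for $s>1$, and after scaling by the scaled $H^s(P)$-norm of $w$, while the moment dofs are bounded by $\|w\|_{0,P}$ via Cauchy–Schwarz. This yields $\|I_h^Pw\|_{d,P}\lesssim\sum_{t}h_P^{t-d}\|w\|_{t,P}$ with the appropriate range of $t$ and powers of $h_P$, uniformly in $\T_h\in\mathbb T$.

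To conclude, let $w_\pi:=\pid_k v\in\p_k(P)$ (or an averaged Taylor polynomial), so that Proposition~\ref{prop:est-polyPi} gives $\|v-w_\pi\|_{t,P}\le C_\apx h_P^{s-t}|v|_{s,P}$ for $0\le t\le s$. Using $I_h^Pw_\pi=w_\pi$,
\[
\|v-I_h^Pv\|_{d,P}\le\|v-w_\pi\|_{d,P}+\|I_h^P(v-w_\pi)\|_{d,P},
\]
and bounding the first term by Proposition~\ref{prop:est-polyPi} and the second by the stability estimate of the previous paragraph applied to $w=v-w_\pi$ (invoking Proposition~\ref{prop:est-polyPi} once more, plus a scaled trace inequality to pass from $H^t(P)$ to the nodal values on $\partial P$), all the $h_P$-powers collapse to $h_P^{s-d}$ and one obtains $\|v-I_h^Pv\|_{d,P}\le C\,h_P^{s-d}|v|_{s,P}$. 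Squaring, summing over $P\in\T_h$, and using $h_P\le h$ together with $s\ge d$ gives $\|v-I_hv\|_{d,\Omega}\le C_\I h^{s-d}|v|_{s,\Omega}$.

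The only genuinely virtual-element-specific — and hence hardest — ingredient is the norm equivalence on $V_h^k(P)$ used in the stability step: this is where one exploits that elements of $V_h^k(P)$ solve $\Delta v_h\in\p_k(P)$ with piecewise-polynomial Dirichlet data, and that \ref{M2} makes the constants in the underlying scaled trace, Poincar\'e and inverse inequalities depend only on $\rho$ and not on the number of edges of $P$. A minor caveat is the endpoint $s=1$: point evaluations are not continuous on $H^1(P)$, so for $s=1$ one must replace the nodal interpolant by a Cl\'ement/Scott–Zhang-type quasi-interpolant built from local $L^2$-projections, for which the same scheme applies verbatim; for $s>1$ the argument above is complete.
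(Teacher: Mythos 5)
The paper does not actually prove this proposition: it is imported verbatim from the cited reference, so your argument can only be measured against the standard literature proof, which it essentially reconstructs — localisation, the inclusion $\p_k(P)\subseteq V_h^k(P)$ together with unisolvence so that the dof-interpolant reproduces polynomials, a scaled stability bound for $I_h$ on each $P$, and the Bramble--Hilbert step via Proposition~\ref{prop:est-polyPi}. This route is sound, and the caveats you raise are the right ones; just be aware of where the weight sits. The norm equivalence you invoke between the scaled degrees of freedom and the $L^2$/$H^1$ norms on $V_h^k(P)$ is itself a nontrivial theorem (Beir\~ao da Veiga--Lovadina--Russo, Brenner--Sung), valid with constants depending only on $\rho$ from \ref{M2} (which in particular bounds the number of edges of $P$), so your proof, like the paper, ultimately outsources the hardest analysis rather than performing it. Two further remarks. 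First, your argument delivers $d=0,1$, which is all the paper ever uses; for $d\geq 2$ the estimate is not meaningful for the global interpolant anyway, so the loose phrase ``appropriate range of $t$'' is harmless. Second, your point about $s=1$ is correct — the nodal values in the paper's definition of $I_h$ are not defined on all of $H^1(P)$ — but note that substituting a Cl\'ement/Scott--Zhang operator proves a statement about a different operator than the one whose orthogonality \eqref{int:ortho} is used later in the paper; in practice the proposition is only applied with $s>1$, where your argument is complete. Finally, a small slip in wording: the nodal values of $v-w_\pi$ on $\partial P$ are controlled by the scaled $L^\infty$ (Sobolev) embedding, which is precisely where $s>1$ enters, not by a trace inequality.
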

\begin{proposition}[Poincar\'e-Friedrichs inequality \cite{brenner2008mathematical}]\label{PF}
  Let $P\in\T_h$ be a polygonal domain with vertices $z_1,\dots,z_{N_P}$. For any $v\in H^1(P)$ with $\frac{1}{N_P}\sum_{j=1}^{N_P}v(z_j)=0$ or $\int_P v\,dx=0$, there exists a positive constant $C_\pf$ that exclusively depend on $\rho$ from \ref{M2} with   
  \[\|v\|_{L^2(P)}\leq C_\pf h_P|v|_{1,P}.\]
\end{proposition}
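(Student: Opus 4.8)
The plan is to reduce both normalisations to the Poincar\'e--Wirtinger inequality $\|v-\overline v_P\|_{L^2(P)}\le C(\rho)\,h_P|v|_{1,P}$ with the integral mean $\overline v_P:=\dashint_P v\,dx$, and then to establish the latter by the averaged-Taylor-polynomial technique, which exploits the uniform star-shapedness in \ref{M2}. In the case $\int_P v\,dx=0$ the reduction is immediate, since then $\overline v_P=0$ and the assertion is exactly the Poincar\'e--Wirtinger inequality. In the vertex-mean case, write $F(w):=\frac1{N_P}\sum_{j=1}^{N_P}w(z_j)$ and observe $F(1)=1$ and $F(v)=0$, whence $\overline v_P=-F(v-\overline v_P)$. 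Applying a bound of the form $|F(w)|\le C(\rho)\big(h_P^{-1}\|w\|_{L^2(P)}+|w|_{1,P}\big)$ to $w=v-\overline v_P$, using $|v-\overline v_P|_{1,P}=|v|_{1,P}$ and the already-reduced Poincar\'e--Wirtinger bound for $\|v-\overline v_P\|_{L^2(P)}$, gives $|\overline v_P|\le C(\rho)|v|_{1,P}$, and then $\|v\|_{L^2(P)}\le\|v-\overline v_P\|_{L^2(P)}+|P|^{1/2}|\overline v_P|\le C_\pf(\rho)\,h_P|v|_{1,P}$ because $|P|\le\pi h_P^2$. The bound on $F$ itself follows from the multiplicative trace inequality on each edge $E$ of $P$ (for which $h_E\ge\rho h_P$ by \ref{M2}) combined with control of the point values of $v|_E$ by $\|v|_E\|_{L^2(E)}$; the latter is legitimate for the virtual element functions to which the proposition is applied, whose restrictions to $\partial P$ are continuous piecewise polynomials of fixed degree.

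For the Poincar\'e--Wirtinger estimate, let $B=B(x_0,r)\subset P$ with $r\ge\rho h_P$ be the ball from \ref{M2} with respect to which $P$ is star-shaped, and set $\overline v_B:=\dashint_B v\,dx$. Since $\overline v_P-\overline v_B=\frac1{|P|}\int_P(v-\overline v_B)\,dx$ implies $|P|^{1/2}|\overline v_P-\overline v_B|\le\|v-\overline v_B\|_{L^2(P)}$, it suffices to estimate $\|v-\overline v_B\|_{L^2(P)}$. For $v\in C^\infty(\overline P)$ (the general case then follows by density), $x\in P$ and $y\in B$, the segment $[y,x]$ lies in $P$, so
\[
v(x)-v(y)=\int_0^1\nabla v\big(y+t(x-y)\big)\cdot(x-y)\,dt .
\]
Averaging over $y\in B$, substituting $z=y+t(x-y)$ for fixed $t$ (with Jacobian $(1-t)^{-2}$), and using Fubini together with the elementary bound $|x-y|\le h_P$, the standard manipulation yields
\[
|v(x)-\overline v_B|\le C(\rho)\int_P\frac{|\nabla v(z)|}{|x-z|}\,dz ,
\]
where the dependence on $\rho$ enters only through $|B|^{-1}\le(\pi\rho^2h_P^2)^{-1}$. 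The right-hand side is a truncated Riesz potential of $|\nabla v|$, so Young's convolution inequality applied with the kernel $w\mapsto|w|^{-1}$ supported in $\{|w|\le h_P\}$, whose $L^1(\mathbb{R}^2)$-norm equals $2\pi h_P$, gives $\|v-\overline v_B\|_{L^2(P)}\le C(\rho)\,h_P|v|_{1,P}$. Combined with the reduction of the first paragraph this proves the claim with $C_\pf=C_\pf(\rho)$.

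The main obstacle is to keep the constant dependent \emph{only} on $\rho$: neither the number of edges of $P$, nor its aspect ratio, nor its possible non-convexity may enter. This is precisely what \ref{M2} buys. A ball of radius at least $\rho h_P$ both legitimises the fundamental-theorem-of-calculus identity along every segment $[y,x]$ with $y\in B$, $x\in P$, and keeps $|B|^{-1}$ and the Jacobian $(1-t)^{-2}$ under control, so that the Riesz-potential estimate is uniform over the whole family $\mathbb{T}$; a naive scaling argument to a unit-diameter reference polygon would instead require compactness of that family of shapes and would not deliver a single constant. A secondary, more technical point is the meaning of the vertex functional on $H^1(P)$, since $H^1(P)\not\hookrightarrow C(\overline P)$ in two dimensions; the vertex-mean statement is to be understood for functions whose trace on $\partial P$ is continuous, which covers every use of the proposition in this paper.
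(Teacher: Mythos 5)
The paper does not actually prove this proposition: it is quoted as a known result with a citation to Brenner--Scott, and its two normalisations are simply used later (the vertex-mean case for $k=1$, the integral-mean case for $k\ge 2$ and for functions vanishing on $\partial P$). Your argument is, in substance, the standard proof behind that citation -- star-shapedness with respect to a ball of radius $\ge \rho h_P$ from \ref{M2}, the averaged fundamental-theorem-of-calculus identity, the resulting Riesz-potential bound $|v(x)-\overline v_B|\le C(\rho)\int_P |\nabla v(z)|\,|x-z|^{-1}\,dz$, and Young's inequality with the truncated kernel of $L^1$-norm $2\pi h_P$ -- and the computation is correct, including the role of the Jacobian $(1-t)^{-2}$ and of $|B|^{-1}\le(\pi\rho^2h_P^2)^{-1}$ in keeping the constant $\rho$-uniform. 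The only part that goes beyond the textbook is the vertex-mean normalisation, and there you correctly identify the real issue: point values are not defined on all of $H^1(P)$, so the statement must be read for functions with continuous (piecewise polynomial) traces on $\partial P$, which is exactly how the paper uses it (e.g.\ for $v_h-\Pi^\nabla_1 v_h$ with $v_h$ in the virtual space). Your reduction via the functional $F(w)=\frac1{N_P}\sum_j w(z_j)$ and the bound $|F(w)|\le C\big(h_P^{-1}\|w\|_{L^2(P)}+|w|_{1,P}\big)$ is sound for such functions; note only that the edge inverse estimate makes this constant depend on the trace degree $k$ as well as on $\rho$, a dependence the paper suppresses throughout, so this is a fair reading rather than a gap. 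In short: the proposal is a correct, self-contained reconstruction of the argument the paper delegates to the literature, plus a sensible repair of the vertex-average case that the cited reference does not state verbatim.
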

\subsection{Companion operator}
 It is clear from the definition of $V_h^k(P)$ in \eqref{local_sp} that determining an explicit expression for the discrete functions $v_h\in V_h^k(P)$ is not feasible and hence we need  computable quantities to define the discrete problem.  Hence the standard VEM utilizes $\pid_kv_h$, but $f(\pid_kv_h)$ is well-defined for $f\in L^2(\Omega)$ and  not in general for $f\in V^*=H^{-1}(\Omega)$. This section presents a computable companion operator $J:V_h^k\to V$ and enables  to  introduce $J$ in the discrete problem when $f\in V^*$. 
 \begin{figure}[H]
	\centering
	\begin{subfigure}{.5\textwidth}
		\centering
		\includegraphics[width=0.7\linewidth]{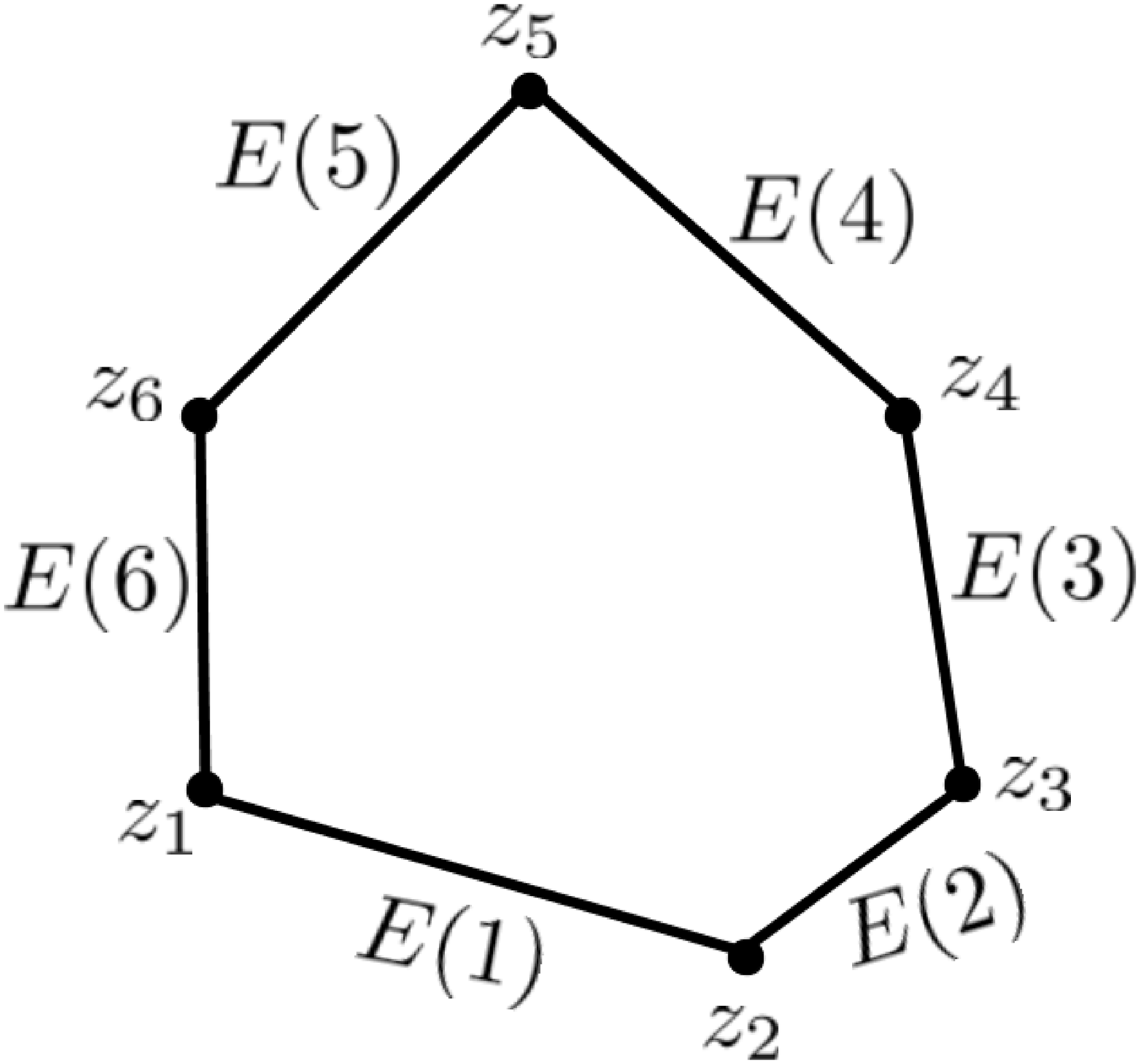}
	\end{subfigure}%
	\begin{subfigure}{.5\textwidth}
		\centering
		\includegraphics[width=0.7\linewidth]{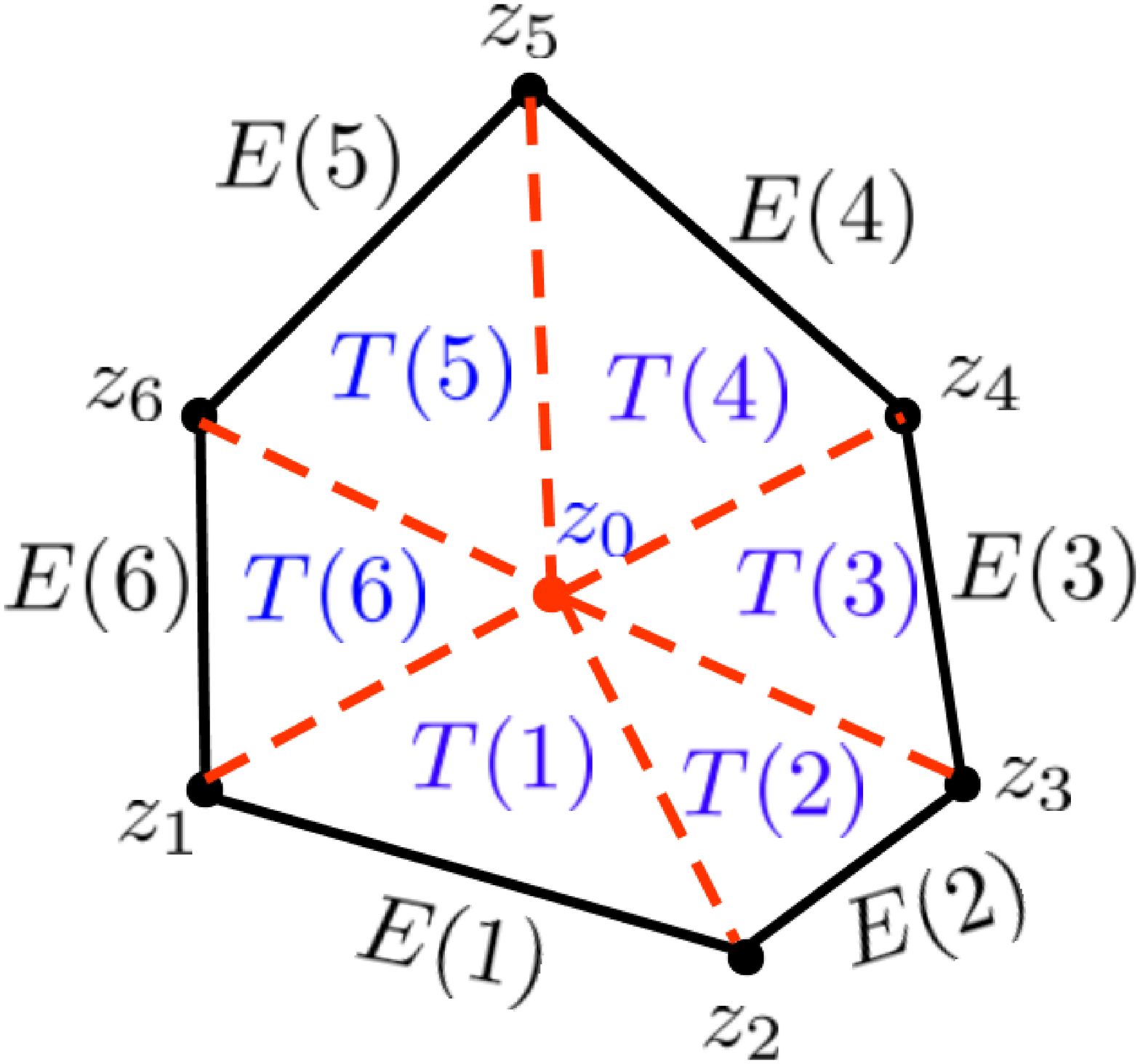}
	\end{subfigure}
		\caption{Polygon $P$ (left) and its sub-triangulation $\tT_h(P):=T(1)\cup\dots T(6)$ (right).}
	\label{fig:TP}
\end{figure}
 \par Enumerate the vertices $\mathcal{V}(P):=\{z_1,\dots,z_{N_P}\}$ and edges $\e(P):=\{E(1),\dots,E(N_P)\}$ consecutively, that is, $E(j)=\text{conv}\{z_j,z_{j+1}\}$ for $j=1,\dots,N_P$ with $z_{N_P+1}:=z_1$ and enumerate $z_1,\dots,z_{N_P}$ counterclockwise along the boundary $\partial P$.  \ref{M2} implies that  each polygonal subdomain $P\in\T_h$  can be divided into triangles $T(j):=\text{conv}\{z_0,z_j,z_{j+1}\}$  for all $j=1,\dots,N_P$ and for the midpoint $z_0$ of the ball from \ref{M2} in Figure~\ref{fig:TP}. It is known \cite{brenner2017some} that the resulting sub-triangulation $\tT_h|_P:=\tT_h(P):=\cup_{j=1}^{N_P}T(j)$ of $P\in\T_h$ is uniformly shape-regular; that is, the minimum angle in each triangle $T\in\tT_h(P), P\in\T_h\in\mathbb{T}$, is bounded below by some positive constant $w_0>0$ that exclusively depends on $\rho$ from \ref{M2}. Let  $\widehat{\mathcal{V}}$ (resp. $\widehat{\mathcal{V}}(P)$) denote the set of vertices and   $\eE$ (resp. $\eE(P)$) denote the set of edges  in  $\tT_h$ (resp. $\tT_h(P)$). 

	Let $\widehat{V}_h^k:=\{v_h\in H^1_0(\Omega) : \forall T\in\tT_h\quad v_h|_T\in\p_k(T)\}$ denote the Lagrange finite element space for general $k$ on the sub-triangulation $\tT_h$, and  the local degrees of freedom  for $v_h\in \widehat{V}_h^k(T)$ and $T\in\tT_h$ are
\begin{itemize}
	\item values of $v_h$ at the vertices of $T$,
	\vspace{-0.25cm}
	\item values of $v_h$ at the $k-1$ interior Gauss-Lobatto points on each edge $E \in \partial T$,
	\vspace{-0.25cm}
	\item  
	$\dashint_{T} v_h \;\chi_{k-3}\; dx \quad \text{for all} \; \chi_{k-3} \in \M_{k-3}(T).$
\end{itemize}   
\begin{theorem}[computable companion in conforming VEM]\label{thm:J}
Given  the virtual element space $V_h^k$ of the degree $k\geq 1$, there exist a linear operator $J : V_h^k\to V$  and a positive constant $C_\J$  that depends exclusively on $\rho$ from \ref{M2} such that
\begin{enumerate}[label=$(\alph*)$]
    \item\label{a} $Jv_h = v_h \quad\text{on}\;\partial P$ for all $P\in\T_h$,
    \item\label{b} $v_h-Jv_h\perp \p_k(\T_h)\quad\text{in}\;L^2(\Omega)$,
    \item\label{c} $\nabla(v_h-Jv_h) \perp  \p_{k-1}(\T_h)^2\quad\text{in}\;(L^2(\Omega))^2$,
    \item\label{d} $\|h_{\T_h}^{-1}(v_h-Jv_h)\|_{L^2(\Omega)}+|v_h-Jv_h|_{1,\Omega}\leq C_\J |v_h-\pid_kv_h|_{1,{\mathrm{pw}}}$.
\end{enumerate}
\end{theorem}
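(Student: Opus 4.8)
The plan is to build $Jv_h$ locally on each polygon $P$ by modifying a first guess living on the sub-triangulation $\tT_h(P)$, and then to correct it so that properties \ref{b} and \ref{c} hold while the stability bound \ref{d} is preserved. The sub-triangulation $\tT_h(P)$ with apex $z_0$ is uniformly shape-regular by \ref{M2}, so all constants below depend only on $\rho$. First I would define, on $\partial P$, the trace of $Jv_h$ to agree with $v_h|_{\partial P}$: since $v_h|_E\in\p_k(E)$ for every $E\in\e(P)$ and $\tT_h(P)$ introduces no new boundary vertices on $\partial P$, the piecewise-polynomial boundary data $v_h|_{\partial P}$ is admissible as trace data for $\widehat V_h^k$ restricted to $P$. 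This forces \ref{a}. On the interior edges of $\tT_h(P)$ (the "spokes" $z_0z_j$) and on the apex $z_0$ the values are still free; use averaging of the two adjacent $\pid_k v_h$ values (or simply set them via the nodal interpolant of a convenient extension) to obtain an initial conforming $w_h\in\widehat V_h^k$ with $w_h=v_h$ on $\partial P$, globally $H^1_0$-conforming, and satisfying the local estimate
\begin{align*}
\|h_P^{-1}(v_h-w_h)\|_{L^2(P)}+|v_h-w_h|_{1,P}\lesssim |v_h-\pid_k v_h|_{1,P}.
\end{align*}
The right way to see this is: on each $T\in\tT_h(P)$, write $v_h-w_h=(v_h-\pid_k v_h)-(w_h-\pid_k v_h)$; the first term is controlled by the hypothesis (summing over $T$), and $w_h-\pid_k v_h$ is a finite element function whose dofs are bounded by boundary values of $v_h-\pid_k v_h$ on $\partial P$ plus, on the spokes, differences of $\pid_k v_h$ across edges of $P$, all of which are controlled through trace/inverse inequalities on shape-regular triangles by $\|v_h-\pid_k v_h\|_{H^1(P)}$; the Poincaré–Friedrichs inequality (Proposition~\ref{PF}) converts the $L^2$ part of $|v_h-\pid_k v_h|$ into the seminorm. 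One must be slightly careful: $\pid_k v_h$ is discontinuous across $\partial P$, so globally the "averaging" description needs the usual Oswald/Scott–Zhang-type bookkeeping, but locally on $P$ everything is internal and clean.

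Next, enforce the orthogonality properties \ref{b} and \ref{c} by a local bubble correction that does not disturb the boundary trace. Pick interior bubble functions supported in $P$ (e.g. the standard cubic-times-polynomial bubbles on the triangles $T\in\tT_h(P)$, or a single interior bubble on $P$ built from the spoke triangles) spanning enough degrees of freedom to prescribe $\int_P (\,\cdot\,)\,q$ for all $q\in\p_k(P)$ and $\int_P \nabla(\,\cdot\,)\cdot\mathbf{p}$ for all $\mathbf p\in\p_{k-1}(P)^2$. Since these bubbles vanish on $\partial P$, adding them keeps \ref{a} and $H^1_0$-conformity intact. The correction is determined by a local finite linear system whose matrix (the Gram matrix of the bubble basis tested against $\p_k(P)\cup\nabla\p_k(P)$) is, after the scaling $x\mapsto (x-x_P)/h_P$, invertible with condition number depending only on $\rho$; a scaling argument then gives
\begin{align*}
\|h_P^{-1}(\text{bubble correction})\|_{L^2(P)}+|\text{bubble correction}|_{1,P}\lesssim \|h_P^{-1}(v_h-w_h)\|_{L^2(P)}+|v_h-w_h|_{1,P},
\end{align*}
because the data fed into the system are exactly $\int_P (v_h-w_h)q$ and $\int_P\nabla(v_h-w_h)\cdot\mathbf p$, which are bounded by the left-hand side of the previous display via Cauchy–Schwarz and inverse estimates on $\p_k(P)$. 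Setting $Jv_h:=w_h+(\text{bubble correction})$ on each $P$ and noting the pieces match across $\partial P$ (same trace $v_h|_{\partial P}$) yields a global $Jv_h\in V=H^1_0(\Omega)$ with \ref{a}, \ref{b}, \ref{c}; combining the two displayed estimates and summing over $P\in\T_h$ gives \ref{d} with $C_\J$ depending only on $\rho$. Computability is immediate: every quantity used — boundary values of $v_h$, the dofs of $v_h$, and $\pid_k v_h$ — is computable from the VEM dofs, and the bubble system has explicit polynomial data.

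The main obstacle I anticipate is the simultaneous enforcement of \ref{b}, \ref{c}, and \ref{d}: one must be sure the interior bubbles furnish enough independent functionals to fix \emph{both} the $\p_k(P)$ moments of the value \emph{and} the $\p_{k-1}(P)^2$ moments of the gradient, while keeping the local Gram matrix uniformly well-conditioned under the shape-regularity constant $\rho$ only. Counting dimensions, $\dim\p_k(P)+\dim\p_{k-1}(P)^2$ functionals are needed, and on a polygon with $N_P$ spoke-triangles there is ample room in the interior bubble space for $k$ small; for general $k$ one should argue via the sub-triangulation, allocating bubbles triangle-by-triangle and using a partition-type argument so that the construction is local and the constant is $N_P$- and $k$-independent (or at worst $k$-dependent, which is fine since $k$ is fixed). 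A secondary technical point is that \ref{c} for the \emph{corrected} function must be checked to survive: the gradient-orthogonality is linear in $Jv_h$, so the bubble system is simply augmented to include these rows; the only thing to verify is that the augmented system remains solvable, i.e. the bubbles are not $L^2$-orthogonal to all of $\p_k(P)$ while simultaneously having gradients orthogonal to all of $\p_{k-1}(P)^2$ — this is a rank condition on a fixed reference polygon, handled once and for all by the reference-element scaling argument.
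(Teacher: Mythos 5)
Your construction is essentially the one in the paper: match the trace $v_h|_{\partial P}$ on the shape-regular sub-triangulation, fill the interior degrees of freedom with $\pid_k v_h$, and add an interior bubble correction to restore the $L^2$ moments, with Poincar\'e--Friedrichs and bubble inverse estimates giving \ref{d}. (Two small remarks on the first stage: inside a single polygon $P$ the projection $\pid_k v_h$ is one polynomial, so no averaging across the spokes and no Oswald/Scott--Zhang bookkeeping is needed; the difference $w_h-\pid_k v_h$ on $P$ is spanned by the boundary-node basis functions with coefficients $(v_h-\pid_k v_h)(z)$, which is exactly how the paper bounds it.)

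The genuine flaw is in your treatment of \ref{c}. You try to enforce the gradient orthogonality as extra rows of the bubble system, identify its solvability as the ``main obstacle,'' and propose to settle it by a rank/scaling argument ``on a fixed reference polygon.'' First, no such argument is available for VEM meshes: general polygons are not affine images of a fixed reference element, so uniform conditioning cannot be obtained this way; uniform constants must come from the shape-regular sub-triangulation, e.g.\ via the inverse estimates for the composite bubble $b_P$ and a weighted Riesz representation in $(b_P\,\cdot,\cdot)_{L^2(P)}$, which is how the paper gets \ref{d} with $C_\J$ depending only on $\rho$. Second, and more to the point, the obstacle does not exist: \ref{c} is automatic once \ref{a} and \ref{b} hold. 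For $\chi_{k-1}\in(\p_{k-1}(P))^2$ one has $\dv\chi_{k-1}\in\p_{k-2}(P)\subset\p_k(P)$, so integration by parts gives
\begin{align*}
(\nabla(v_h-Jv_h),\chi_{k-1})_{L^2(P)}=-(v_h-Jv_h,\dv\chi_{k-1})_{L^2(P)}+(v_h-Jv_h,\chi_{k-1}\cdot\textbf{n})_{L^2(\partial P)}=0
\end{align*}
by \ref{b} and \ref{a}. Equivalently, in your augmented system the gradient-moment rows are linear combinations of the value-moment rows (both for the data $v_h-w_h$ and for any correction vanishing on $\partial P$), so they add nothing and the dimension-counting and rank worries disappear. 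With \ref{c} dropped from the constraint set and the uniformity argument rerouted through the sub-triangulation bubbles, your proof coincides with the paper's.
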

\textit{Design of $J$}. Define $J_1:V_h^k\to \widehat{V}_h^k $, for $v_h\in V_h^k$, by
\begin{align*}
    J_1v_h(z) =\begin{dcases} v_h(z)\quad\text{for all nodes on the boundary of}\; P,
    \\
    \pid_kv_h(z)\quad\text{for all nodes in the interior of}\;P,\end{dcases}
\end{align*}
and,  for all $\chi_{k-3}\in\mathcal{M}_{k-3}(T),\; T\in\tT_h(P)$  and $P\in\T_h$, by
\begin{align*}
    \dashint_T J_1v_h\;\chi_{k-3}\,dx = \dashint_T \pid_kv_h\;\chi_{k-3}\,dx.
\end{align*}
For each $T\in\tT_h(P)$, let $b_T\in W_0^{1,\infty}(T)$ denote the cubic bubble-function $27\lambda_1\lambda_2\lambda_3$ for the barycentric co-ordinates $\lambda_1, \lambda_2, \lambda_3\in\p_1(T)$ of $T$ with $\dashint_Tb_T\,dx=9/20$. Let $b_T$ be extended by zero outside $T$ and, for $P\in\T_h$, define 
	\begin{align}
	b_P:=\frac{20}{9}\sum_{T\in\tT_h(P)}b_T\in W_0^{1,\infty}(P)\subset W_0^{1,\infty}(\Omega)\label{bubble}
	\end{align}
	with 
	$
	\dashint_{P}b_P\,dx=1$. Let $v_P\in\p_k(\T_h)$ be the Riesz representation of the linear functional $\p_k(\T_h)\to\mathbb{R}$ defined by $w_k\mapsto(v_h-J_1v_h,w_k)_{L^2(\Omega)}$ for $w_k\in\p_k(\T_h)$ in the Hilbert space $\p_k(\T_h)$ endowed with the weighted $L^2$ scalar product $(b_P\bullet,\bullet)_{L^2(P)}$. Hence $v_P$ exists uniquely and satisfies $\Pi_k(v_h-J_1v_h) = \Pi_k(b_Pv_P)$. In other words,
	\begin{align}
	    (v_h-J_1v_h,w_k)_{L^2(P)}=(b_Pv_P,w_k)_{L^2(P)}\quad\text{for all}\;w_k\in\p_k(P).\label{2.10}
	\end{align}
	Given the bubble-functions $(b_P:P\in\T_h)$ from \eqref{bubble} and the above functions $(v_P:P\in\T_h)$ for $v_h\in V_h$, define
	\begin{align}
	Jv_h:=J_1v_h+\sum_{P\in\T_h}b_Pv_P\in V.\label{J}
	\end{align} 

\begin{proof}[Proof of \ref{a}]
Since $b_P$ vanishes on $\partial P$, there holds $Jv_h=J_1v_h$ along $\partial P$ for all $P\in\T_h$. The definition of $J_1$ implies that $J_1v_h = v_h\in\p_k(\e(P))$ for all $P\in\T_h$. This concludes the proof of \ref{a}.
\end{proof}
\begin{proof}[Proof of \ref{b}]
For all $w_k\in \p_k(P)$ and any $P\in\T_h$, the definition of $J$ in \eqref{J} and \eqref{2.10}  prove 
\[(v_h-Jv_h,w_k)_{L^2(P)}=(v_h-J_1v_h,w_k)_{L^2(P)}-(b_Pv_P,w_k)_{L^2(P)}=0. \]
This concludes the proof of \ref{b}.
\end{proof}
\begin{proof}[Proof of \ref{c}]
For $\chi_{k-1}\in(\p_{k-1}(P))^2$ and  any $P\in\T_h$, an integration by parts  shows  
\begin{align*}
    (\nabla(v_h-Jv_h),\chi_{k-1})_{L^2(P)} =(v_h-Jv_h,\dv{\chi_{k-1}})_{L^2(P)}+ (v_h-Jv_h,\chi_{k-1}\cdot{\textbf{n}})_{L^2(\partial P)}=0
\end{align*}
with \ref{b} resp. \ref{a} for the first term resp. second term in the last step. This concludes the proof of \ref{c}.
\end{proof}
\begin{proof}[Proof of \ref{d}]
Since the edge and cell moments of $J_1v_h$ are defined in terms of $\pid_kv_h$ inside the interior of $P$,
 the expression of $(\pid_kv_h-J_1v_h)|_P$ in terms of the Lagrange basis functions     leads to
 \begin{align}\label{2.12}
     (\pid_kv_h-J_1v_h)|_P = \sum_{z}(\pid_kv_h-v_h)(z)\phi_z,
 \end{align}
 where  $\phi_z$ is a canonical basis function with respect to the node $z$ along the boundary of $P$.
 The Sobolev inequality \cite{brenner2017some} (with a positive constant $C_\cS$) implies for $v_h-\pid_kv_h\in H^1(P)$ that 
 \begin{align}
     |(v_h-\pid_kv_h)(z)|\leq \|v_h-\pid_kv_h\|_{L^\infty(P)} &\leq C_\cS (h_P^{-1}\|v_h-\pid_kv_h\|_{L^2(P)}+|v_h-\pid_kv_h|_{1,P})\nonumber\\&\leq C_\cS(1+C_{\text{PF}}) |v_h-\pid_kv_h|_{1,P}\label{4.3}
 \end{align}
 with the last step from Proposition~\ref{PF}. The standard scaling arguments \cite{brenner2008mathematical} show that $|\phi_z|_{1,P}$ is bounded. The combination of this and \eqref{4.3} in \eqref{2.12} results in $|\pid_kv_h-J_1v_h|_{1,P}\leq C_\B C_\cS(1+C_{\text{PF}}) |v_h-\pid_kv_h|_{1,P}$. Note that though \eqref{2.12} depends on number of degrees of freedom, it can be uniformly bounded by a constant $C_\B$ independent of $P$.  This and the triangle inequality $|v_h-J_1v_h|_{1,\Omega}\leq |v_h-\pid_kv_h|_{1,\text{pw}}+|\pid_kv_h-J_1v_h|_{1,\text{pw}}$ result in 
 \begin{align}
     |v_h-J_1v_h|_{1,\Omega}\leq (1+C_\B C_\cS(1+C_{\text{PF}})) |v_h-\pid_kv_h|_{1,\text{pw}}.\label{4.5}
 \end{align}
 The definition of $J$ leads to $v_h-Jv_h=(v_h-J_1v_h)-\sum_Pb_Pv_P$ and so it remains to handle the term $|b_Pv_P|_{1,P}$. For any $ \chi\in \p_k(P)$, there exists  a positive constant $C_\cb$  in the inverse estimates \cite{MR3671497,MR4444835} 
	 \begin{align}
	 C_\cb^{-1}\|\chi\|^2_{L^2(P)}\leq &(b_P,\chi^2)_{L^2(P)}\leq C_\cb\|\chi\|^2_{L^2(P)},\label{b1}\\
	 C_\cb^{-1}\|\chi\|_{L^2(P)}\leq&\sum_{m=0}^1 h_P^m|b_P\chi|_{m,P}\leq C_\cb\|\chi\|_{L^2(P)}.\label{b2}
	 \end{align} The inverse inequality \eqref{b2} proves $ |b_Pv_P|_{1,P}	\leq  C_\cb h_P^{-1}\|v_P\|_{L^2(P)}.$ The first inequality in \eqref{b1} and \eqref{2.10} for $w_k=v_P\in\p_k(P)$  lead to $C_\cb^{-1}\|v_P\|^2_{L^2(P)}\leq (b_Pv_P,v_P)_{L^2(P)}=(v_h-J_1 v_h,v_P)_{L^2(P)}\leq\|v_h-J_1 v_h\|_{L^2(P)}\|v_P\|_{L^2(P)}$ with  a Cauchy-Schwarz inequality in the last step. Hence $\|v_P\|_{L^2(P)}\leq C_\cb\|v_h-J_1v_h\|_{L^2(P)}$ and a combination  with the aforementioned  estimates verifies \begin{align}|b_Pv_P|_{1,P}\leq C_\cb^2h_P^{-1}\|v_h-J_1v_h\|_{L^2(P)}\leq C_\cb^2C_{\text{PF}}|v_h-J_1v_h|_{1,P}\label{4.8}\end{align}
with Proposition~\ref{PF} in the last step. 
  A triangle inequality, and the combination of \eqref{4.5} and \eqref{4.8} result in  \[|v_h-Jv_h|_{1,\Omega}\leq(1+C_\cb^2C_{\text{PF}})(1+C_\B C_\cS(1+C_{\text{PF}}))|v_h-\pid_kv_h|_{1,\text{pw}}.\] 
	 Since $v_h-Jv_h$ vanishes on $\partial P$, the Poincar\'e-Friedrichs inequality from Proposition~\ref{PF} leads to $$\|v_h-Jv_h\|_{L^2(P)}\leq C_{\text{PF}}h_P|v_h-Jv_h|_{1,P}.$$ The two last displayed estimates conclude the proof of \ref{d} with $C_\J:=(1+C_{\text{PF}})(1+C_\cb^2C_{\text{PF}})(1+C_\B C_\cS(1+C_{\text{PF}}))$. 
\end{proof}
\begin{remark}[comparison with nonconforming VEM]
Carstensen \emph{et al}. \cite{carstensen2022nonconforming,MR4444835} discuss a companion  for the lowest-order nonconforming VEM, and the construction is done in two steps: the companion operator is first defined from the nonconforming VEM space to 
the nonconforming FEM space and then lifted to the 
the Sobolev space $V$. The extension to general degree $k$ is complex in this case and some hints for the construction are provided in \cite{carstensen2022nonconforming}. On the other hand, the construction of a companion presented in this paper for the conforming VEM  is for general degree $k$ and also is quite simple compared to the nonconforming VEM. This is useful in the context of second-order problems.
\end{remark}
\subsection{Discrete problem}
The restriction of the bilinear form  $a(\cdot,\cdot)$ to a polygonal subdomain $P\in\T_h$ is  denoted by $a^P(\cdot,\cdot)$ and its piecewise version by $a_\pw(\cdot,\cdot)$.  Define the corresponding local discrete bilinear form $a_h$ on $V_h^k\times V_h^k$ by
\begin{align}
    a_h(v_h,w_h):=a_\pw(\pid_k v_h,\pid_kw_h)+s_h((1-\pid_k) v_h,(1-\pid_k)w_h)\label{2.18}
\end{align}
with a symmetric positive definite stability term $s_h$ that satisfies 
\begin{align}
    C_\s^{-1}a_\pw(v_h,v_h)\leq s_h(v_h,v_h)\leq C_\s a_\pw(v_h,v_h)\label{stab}
\end{align}
for all $v_h\in \text{Ker}(\pid_k)$ and for a positive constant $C_\s$ that depends exclusively on $\rho$ from \ref{M2}. Let the restriction of $s_h$ to a polygonal domain be denoted by $s_h^P$ and recall the notation $N_{\text{dof}}$ from Subsection~\ref{sec:vspace} for the total number of local dofs. A standard example of $s_h^P(\cdot,\cdot)$ \cite{beirao2013basic} satisfying \eqref{stab} is
\begin{align*}
    s_h^P(v_h,w_h)=\sum_{j=1}^{N_{\text{dof}}}\text{dof}_j(v_h)\text{dof}_j(w_h)\quad\text{for all}\; v_h,w_h\in V_h^k(P).
\end{align*}
The discrete problem seeks $u_h\in V_h^k$ such that
\begin{align}
  a_h(u_h,v_h) = f_h(v_h):=f(Qv_h)\quad \text{for all}\; v_h\in V_h^k \label{eq:fweak-VE} 
\end{align}
 with $Q=\Pi_k$ when $f\in L^2(\Omega)$ for the {\it standard VEM} scheme and $Q=J$ when $f\in V^*$ for the {\it modified VEM} scheme. It is easy to observe from \eqref{2.18} that the bilinear form $a_h$ is polynomial consistent, that is, for all $v_h\in V_h^k$ and $\chi_k\in\p_k(\T_h)$,
 \begin{align}
     a_h(v_h,\chi_k) = a_\pw(v_h,\chi_k)\quad\text{and}\quad a_h(\chi_k,v_h) = a_\pw(\chi_k,v_h).\label{poly_cons}
 \end{align}
 From \eqref{2.18}-\eqref{stab}, $a_h$ is bounded and coercive (see \cite{beirao2013basic} for a proof). That is, 
\begin{align}
    a_h(v_h,w_h)&\leq (1+C_\s) |v_h|_{1,\pw}|w_h|_{1,\pw}\quad\text{for all}\;v_h,w_h\in V_h^k+\p_k(\T_h)\label{3.8.a} \\
    a_h(v_h,v_h)&\geq C_\s^{-1}|v_h|_{1,\Omega}^2\hspace{2.5cm}\text{for all}\;v_h\in V_h^k.\label{3.8}
\end{align}
This implies that the discrete solution operator $\cK_h:V^*\to V_h^k$ with $\cK_h f:=u_h$ is well-defined. 
\subsection{Best approximation}
The error analysis relies on the two choices $Q=\Pi_k$ and $Q=J$ in the discrete problem \eqref{eq:fweak-VE}. Recall that the term $(f,\Pi_kv_h)_{L^2(\Omega)}$ is well-defined for $f\in L^2(\Omega)$ and $f(Jv_h)$ for $f\in V^*$. The oscillation of $f\in L^2(\Omega)$   reads  
	\begin{align*}
	\mathrm{osc}_1(f,{\T_h}):=\Big(\sum_{P\in{\T_h}}\mathrm{osc}_1^2(f,P)\Big)^{1/2}\quad\text{and}\quad\mathrm{osc}^2_1(f,P):= \|h_P(1-\Pi_k)f\|_{L^2(P)}^2.
	\end{align*}
\begin{theorem}[error estimate] \label{thm:fwd-best}
	Let $\cK$ and $\cK_h$ be the solution operators for the continuous problem \eqref{eq:weak-fwd} and discrete problem \eqref{eq:fweak-VE}.  There exist   positive constants $C_1$  and $C_2$ that depend  exclusively  on $\rho$ from \ref{M2} such that,
	\begin{enumerate}[label=(\alph*)]
	\item\label{thm:2.3.a} for $f\in L^2(\Omega)$ and $Q=\Pi_k$,
	\begin{align*}
	C_1^{-1}|(\cK- \cK_h)f|_{1,\Omega}  \le&  \inf_{v_h \in V_h^k} |\cK f-v_h|_{1,\Omega} +  \inf_{p_k \in \p_k(\T_h)}|\cK f-p_k|_{1,\pw} + \mathrm{osc}_1(f,\T_h).
 \end{align*}
 \item for $f\in V^*$ and $Q=J$,
 \begin{align*}
 C_2^{-1}|(\cK- \cK_h)f|_{1,\Omega}\le\inf_{v_h \in V_h^k} |\cK f-v_h|_{1,\Omega} +  \inf_{p_k \in \p_k(\T_h)}|\cK f-p_k|_{1,\pw}.
 \end{align*}
	\end{enumerate}
\end{theorem}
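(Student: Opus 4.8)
The plan is to treat the two cases by a single second-Strang-type argument, since the structure $a_h(u_h,v_h)=f(Qv_h)$ is common to both. Fix $f$ (in $L^2(\Omega)$ or $V^*$ as appropriate), write $u:=\cK f$ and $u_h:=\cK_h f$, and let $v_h\in V_h^k$ and $p_k\in\p_k(\T_h)$ be arbitrary. First I would split the error through the interpolant or through $v_h$: using coercivity \eqref{3.8}, $C_\s^{-1}|u_h-v_h|_{1,\Omega}^2\le a_h(u_h-v_h,u_h-v_h)$, and then the triangle inequality reduces the whole estimate to bounding $a_h(u_h-v_h,w_h)$ for a generic $w_h\in V_h^k$ with $|w_h|_{1,\Omega}=1$. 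Insert $p_k$: write $a_h(u_h-v_h,w_h)=\big(f(Qw_h)-a_h(v_h,w_h)\big)$, and using polynomial consistency \eqref{poly_cons} add and subtract $a_\pw(p_k,w_h)$ and $a_h(p_k,w_h)=a_\pw(p_k,w_h)$ to obtain
\begin{align*}
a_h(u_h-v_h,w_h)=f(Qw_h)-a_\pw(p_k,w_h)+a_h(p_k-v_h,w_h).
\end{align*}
The last term is controlled by boundedness \eqref{3.8.a} by $(1+C_\s)|p_k-v_h|_{1,\pw}\le(1+C_\s)(|u-v_h|_{1,\Omega}+|u-p_k|_{1,\pw})$, which already produces the two best-approximation terms on the right-hand side.

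The heart of the matter is the \emph{consistency term} $f(Qw_h)-a_\pw(p_k,w_h)$. I would rewrite it using the weak formulation \eqref{eq:weak-fwd}. In case \ref{thm:2.3.a}, $Q=\Pi_k$ and $f\in L^2(\Omega)$, so $f(Qw_h)=(f,\Pi_kw_h)_{L^2(\Omega)}=(\Pi_kf,w_h)_{L^2(\Omega)}$; meanwhile $a(u,w_h)=f(w_h)=(f,w_h)_{L^2(\Omega)}$, hence $f(\Pi_kw_h)-a_\pw(p_k,w_h)=(\Pi_kf-f,w_h)_{L^2(\Omega)}+a_\pw(u-p_k,w_h)$. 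The second piece is bounded by $|u-p_k|_{1,\pw}$; the first is where the oscillation enters, since $(\Pi_kf-f,w_h)_{L^2(\Omega)}=(\Pi_kf-f,w_h-\Pi_0w_h)_{L^2(\Omega)}$ (or $w_h-\Pi_{k-1}w_h$) by $L^2$-orthogonality, and a Poincaré/approximation estimate on each $P$ yields $\le\mathrm{osc}_1(f,\T_h)\,|w_h|_{1,\Omega}$. In the other case, $Q=J$ and $f\in V^*$: then $f(Jw_h)=a(u,Jw_h)=a(u,w_h)+a(u,Jw_h-w_h)$, so
\begin{align*}
f(Jw_h)-a_\pw(p_k,w_h)=a_\pw(u-p_k,w_h)+a(u,Jw_h-w_h).
\end{align*}
For the crucial term $a(u,Jw_h-w_h)=(\nabla u,\nabla(Jw_h-w_h))_{L^2(\Omega)}$ I would subtract $\nabla p_k$ using property \ref{c} of Theorem~\ref{thm:J}: since $\nabla(w_h-Jw_h)\perp\p_{k-1}(\T_h)^2$, $a(u,Jw_h-w_h)=-(\nabla(u-p_k),\nabla(w_h-Jw_h))_{L^2(\Omega)}$, which by Cauchy--Schwarz and property \ref{d} is $\le|u-p_k|_{1,\pw}\,|w_h-Jw_h|_{1,\Omega}\le C_\J|u-p_k|_{1,\pw}\,|w_h-\pid_kw_h|_{1,\pw}$; finally $|w_h-\pid_kw_h|_{1,\pw}\le|w_h|_{1,\Omega}+|\pid_kw_h|_{1,\pw}\le 2|w_h|_{1,\Omega}$ by the stability \eqref{pid_stab} of $\pid_k$. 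Crucially \emph{no oscillation term appears}, because the companion $J$ absorbs the mismatch between $f(Jw_h)$ and $(f,w_h)$ exactly, rather than approximately as $\Pi_k$ does.

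Collecting the bounds: in both cases $|a_h(u_h-v_h,w_h)|\le C\big(|u-v_h|_{1,\Omega}+|u-p_k|_{1,\pw}\big)+(\text{osc term in case }\ref{thm:2.3.a})$ for $|w_h|_{1,\Omega}=1$, so taking the supremum over such $w_h$ and combining with coercivity gives $|u_h-v_h|_{1,\Omega}\le C(\dots)$; one more triangle inequality $|u-u_h|_{1,\Omega}\le|u-v_h|_{1,\Omega}+|u_h-v_h|_{1,\Omega}$ and finally the infimum over $v_h\in V_h^k$ and $p_k\in\p_k(\T_h)$ yields the stated estimates, with $C_1,C_2$ assembled from $C_\s$, $C_\J$, $C_{\apx}$, $C_\pf$ and the constant $2$ from \eqref{pid_stab}. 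The main obstacle is not any single inequality but getting the consistency term decomposed so that property \ref{c} of $J$ can be invoked against $\nabla p_k$ — that orthogonality is precisely what kills the data oscillation, and recognizing that $f(Jw_h)=a(u,Jw_h)$ (valid because $Jw_h\in V$, which is property \ref{a} plus $Jw_h\in H^1_0$) is the step that makes the $V^*$ case work at all; the $L^2$ case is then a routine variant where one settles for $\mathrm{osc}_1$.
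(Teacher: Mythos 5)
Your proposal is correct and follows essentially the same route as the paper: coercivity \eqref{3.8} and polynomial consistency \eqref{poly_cons} reduce the estimate to the consistency term, which is handled via the $L^2$-orthogonality of $\Pi_k$ plus a Poincar\'e--Friedrichs bound (producing $\mathrm{osc}_1(f,\T_h)$) when $Q=\Pi_k$, and via $f(Jw_h)=a(u,Jw_h)$ together with Theorem~\ref{thm:J}\ref{c}--\ref{d} (which eliminates the oscillation) when $Q=J$. The only cosmetic difference is your unified Strang-type packaging, where the stabilization is absorbed into the boundedness bound \eqref{3.8.a} applied to $a_h(p_k-v_h,w_h)$ instead of being split off as $s_h((1-\pid_k)v_h,(1-\pid_k)e_h)$ as in the paper; the resulting estimates agree up to harmless constant factors.
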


\begin{proof}[(a) Error analysis for standard VEM ($Q=\Pi_k$)]
	The proof of \ref{thm:2.3.a} can be referred from \cite{beirao2013basic} and is presented here to maintain the continuity of reading.
	
	By the definition of operators $\cK$ and $\cK_h$, we have $(\cK- \cK_h)f = u -u_h$. Let $v_h \in V_h^k$ be any arbitrary function and $e_h := v_h-u_h$. The coercivity in \eqref{3.8} of the discrete bilinear form $a_h(\cdot, \cdot)$ on $V_h^k$ and   \eqref{eq:weak-fwd} with the test function $v=e_h\in V$ lead to
	\begin{align*}
	C_\s^{-1} |e_h|_{1,\Omega}^2 & \le a_h(v_h, e_h) - a_h(u_h, e_h) - a(u,e_h) + (f,e_h)_{L^2(\Omega)}\\
	& =a_h(v_h, e_h)- a(u,e_h) +(f,e_h)_{L^2(\Omega)}-  (f,\Pi_k e_h)_{L^2(\Omega)}
	\end{align*}
	 with  \eqref{eq:fweak-VE} in the last step. The polynomial consistency  $a_h(v_h, p_k)=a_\pw(v_h,p_k)$ in \eqref{poly_cons} for $v_h\in V_h^k$ and  $p_k \in \p_k(\T_h)$ shows
	\begin{align*}
	C_\s^{-1} | e_h |_{1,\Omega}^2 \le  (a_h(v_h-p_k, e_h)-a_\pw(u-p_k,e_h))+ (h_{\T_h}(f-\Pi_kf),h_{\T_h}^{-1}(e_h-\pid_ke_h))_{L^2(\Omega)}
	\end{align*}
	with \eqref{def:L2} in the last step. 
 For any $v\in H^1(\T_h)$, the definition of $\pid_k$ in \eqref{pid} implies that  
 \begin{align}|v-\pid_k v|_{1,\pw}\leq \inf_{\chi_k\in\p_k(\T_h)}|v-\chi_k|_{1,\pw}.\label{est:pid}
 \end{align}
 Proposition~\ref{PF} and \eqref{est:pid} for $\chi_k=0$ imply $\|h_{\T_h}^{-1}(e_h-\pid_ke_h)\|_{L^2(\Omega)}\leq C_{\text{PF}}|e_h-\pid_ke_h|_{1,\pw}\leq C_\pf|e_h|_{1,\Omega}$. This, a Cauchy-Schwarz inequality for $a_\pw$, \eqref{3.8.a} for $a_h$,  and the triangle inequality $|v_h-p_k|_{1,\pw}\leq |u-v_h|_{1,\Omega}+|u-p_k|_{1,\pw}$ provide {	
	\begin{align*}
	C_\s^{-1}| e_h |_{1,\Omega} &\le (1+C_\s)|u-v_h|_{1,\Omega}+(2+C_\s)|u-p_k|_{1,\pw}+C_\pf\|h_{\T_h}(f-\Pi_k f)\|_{L^2(\Omega)}\\&\le (2+C_\s+C_{\text{PF}}) \Big(  |u-p_k|_{1,\pw} + |u-v_h|_{1,\Omega} +   \|h_{\T_h}(f-\Pi_k f)\|_{L^2(\Omega)}\Big).
	\end{align*}}
	An infimum over  $v_h \in V_h^k$ and $p_k \in \p_k(\T_h)$ leads to
	\begin{align*}
	| e_h |_{1,\Omega} \le C_\s(2+C_\s+C_{\text{PF}}) \Big( \inf_{v_h \in V_h^k} |u-v_h|_{1,\Omega} + \inf_{p_k \in \p_k(\T_h)}|u-p_k|_{1,\pw} + \mathrm{osc}_1(f,\T_h) \Big).
	\end{align*}
	The triangle inequality $|u-u_h|_{1,\Omega}\leq |u-v_h|_{1,\Omega}+|e_h|_{1,\Omega}$ and the above estimate prove   \ref{thm:2.3.a} with  $C_1:=1+C_\s(2+C_\s+C_{\text{PF}})$.
\end{proof}
\begin{proof}[(b) Error analysis for the modified scheme ($Q=J$)]
Let $e:=u-u_h$ and $e_h:=v_h-u_h\in V_h^k$ for any $v_h\in V_h^k$. The coercivity of $a_h$ from \eqref{3.8} and the definition of $a_h$ from \eqref{2.18} show
\begin{align}
    C_\s^{-1}|e_h|_{1,\Omega}^2&\leq a_h(e_h,e_h)=a_\pw(\pid_k v_h,\pid_k e_h)+s_h((1-\pid_k)v_h,(1-\pid_k)e_h)-a_h(u_h,e_h)\nonumber\\&=a_\pw(v_h,\pid_k e_h)+s_h((1-\pid_k)v_h,(1-\pid_k)e_h)-a(u,Je_h).\label{2.20}
\end{align}
  The second equality above follows from   \eqref{pid} and the key identities $a_h(u_h,e_h) = F(Je_h) = a(u,Je_h)$ from the continuous and discrete problems \eqref{eq:weak-fwd} and \eqref{eq:fweak-VE}. The orthogonality results $a_\pw(e_h-\pid_k e_h,p_k)=0=a_\pw(e_h-Je_h,p_k)$ for any $p_k\in\p_k(\T_h)$ from \eqref{pid} and Theorem~\ref{thm:J}.c in \eqref{2.20} show

\begin{align}
   C_\s^{-1}|e_h|_{1,\Omega}^2&\leq a_\pw(v_h-u,\pid_k e_h)+s_h((1-\pid_k)v_h,(1-\pid_k)e_h)\nonumber\\&\quad+a_\pw(u-p_k,\pid_ke_h-Je_h).
\end{align}
A Cauchy-Schwarz inequality and \eqref{pid_stab}  prove that 
\begin{align}
   a_\pw(v_h-u,\pid_k e_h)\leq |u-v_h|_{1,\Omega}|e_h|_{1,\Omega}. 
\end{align} Again a Cauchy-Schwarz inequality for the inner product $s_h$, \eqref{stab}, and the continuity of $a_\pw$ lead to
\begin{align}
 s_h((1-\pid_k)v_h,(1-\pid_k)e_h)\leq C_\s|   (1-\pid_k)v_h|_{1,\pw}|(1-\pid_k)e_h|_{1,\pw}.\label{2.26}
\end{align}
 The estimate \eqref{est:pid} with $v=v_h$ and $\chi_k=\pid_ku$ for the first term, and $v=e_h$ and $\chi_k=0$ for the second term in \eqref{2.26} show
\begin{align}
   &s_h((1-\pid_k)v_h,(1-\pid_k)e_h) \leq C_\s |v_h-\pid_k u|_{1,\pw}|e_h|_{1,\Omega}\nonumber\\&\leq C_\s(|u-v_h|_{1,\Omega}+|u-\pid_ku|_{1,\pw})|e_h|_{1,\Omega}\leq C_\s(|u-v_h|_{1,\Omega}+|u-p_k|_{1,\pw})|e_h|_{1,\Omega}\label{eqn:2.24}
\end{align}
with a triangle inequality in the second step and \eqref{est:pid} for any $p_k\in\p_k(\T_h)$ in the last step. A Cauchy-Schwarz inequality and  a triangle inequality  imply
\begin{align}
    a_\pw(u-p_k,\pid_ke_h-Je_h)&\leq |u-p_k|_{1,\pw}(|e_h-\pid_ke_h|_{1,\pw}+|e_h-Je_h|_{1,\Omega})\nonumber\\&\leq (1+C_\J)|u-p_k|_{1,\pw}|e_h|_{1,\Omega}.\label{2.23}
\end{align}
The last step results from  Theorem~\ref{thm:J}.d,  and \eqref{est:pid} with $v=e_h$ and $\chi_k=0$. The combination  \eqref{2.20}-\eqref{2.23} results in {
\begin{align*}
C_s^{-1}|e_h|_{1,\Omega}&\leq (1+C_\s)|u-v_h|_{1,\Omega}+(1+C_\s+C_\J)|u-p_k|_{1,\pw}\\&\leq (1+C_\s+C_\J)(|u-v_h|_{1,\Omega}+|u-p_k|_{1,\pw}).
\end{align*}}
This and the triangle inequality $|e|_{1,\Omega}\leq|u-v_h|_{1,\Omega}+|e_h|_{1,\Omega}$ conclude the proof with $C_2:= 1+C_\s(1+C_\s+C_\J)$.
\end{proof}
\begin{remark}[$L^2$ error estimate]
    The error estimate in $L^2$ norm is based on Aubin-Nitsche duality argument. The techniques follow analogously as in the proof of Theorem~\ref{thm:err} for general second-order elliptic problems discussed in the next section and hence we omit the details here.
\end{remark}
\section{Generalization to second-order linear indefinite elliptic problems with rough source}
This section presents   the modified VEM scheme for  general second-order linear elliptic problems with rough source terms. Subsection~3.1 discusses the model problem, and the corresponding weak and discrete formulations. Subsection~3.2 proves energy and $L^2$ norm error estimates in the best approximation form.  
\subsection{Weak and discrete problem}\label{subsec:3.1}
The conforming VEM approximates the weak solution $u\in H^1_0(\Omega)$ to 
\begin{align}
    -\dv(\ba\nabla u+\bb u)+\gamma u = f\quad\text{in}\;\Omega\label{cp}
\end{align}
for $f\in V^*=H^{-1}(\Omega)$.  We assume that the coefficients $\ba_{jk}, \bb_j,$ and $\gamma$  are smooth  functions with $\ba = [\ba_{jk}]$ and $\bb=[\bb_j]$ for $j,k=1,2$ and there exists a unique solution to \eqref{cp} (see \cite{MR4444835} for more details). Define, for all $v,w\in V$, 
\begin{align*}
    B(v,w) = (\ba \nabla v,\nabla w)_{L^2(\Omega)}+  (v,\bb\cdot\nabla w)_{L^2(\Omega)}+ (\gamma v,w)_{L^2(\Omega)}.
\end{align*}
The weak formulation  seeks $u\in V$ with
\begin{align}
    B(u,v)  = f(v) \quad\text{for all}\; v\in V.\label{3.2}
\end{align}
The bilinear form $B(\cdot,\cdot)$ is continuous and satisfies an inf-sup condition \cite{braess2007finite}, that is,
\begin{align}
    B(v,w)\leq M_\cb|v|_{1,\Omega}|w|_{1,\Omega} \text{ and } 0<\beta_0:=\inf_{0\neq v\in V}\sup_{0\neq w\in V} \frac{B(v,w)}{|v|_{1,\Omega}|w|_{1,\Omega}}.\label{3.3}
\end{align}
Recall the definition of  global virtual element space  $V_h^k$ from Subsection~\ref{sec:vspace}, and define the  discrete counterpart, for all $v_h,w_h\in V_h^k$, by
\begin{align*}
    B_h(v_h,w_h) &:= (\ba \Pi_{k-1}\nabla v_h, \Pi_{k-1}\nabla w_h)_{L^2(\Omega)} +s_h((1-\pid_k)v_h,(1-\pid_k)w_h)\\&\quad+ (\Pi_k v_h,\bb\cdot\Pi_{k-1}\nabla w_h)_{L^2(\Omega)}+ (\gamma\Pi_k v_h,\Pi_k w_h)_{L^2(\Omega)}.
\end{align*}
The discrete problem seeks $u_h\in V_h^k$ such that
\begin{align}
    B_h(u_h,v_h)=  f_h(v_h)\quad\text{for all}\; v_h\in V_h^k\label{dp}
\end{align}
with $f_h(v_h) = f(Qv_h)$ for $Q=\Pi_k$ when $f\in L^2(\Omega)$ and for $Q=J$ when $f\in V^*$. There exists a unique weak solution $u\in V$ to \eqref{cp} and discrete solution $u_h\in V_h^k$ to \eqref{dp}. Refer to \cite{MR3460621} for a proof. 
\begin{remark}[comparison of bilinear forms $a_h(\cdot,\cdot)$ and $B_h(\cdot,\cdot)$]
    Note that $\Pi_{k-1}\nabla v$ and $\nabla\pid_k v$ are same for $k=1$ and $v\in H^1(\Omega)$, but not for higher values of $k\geq 2$. The choice of $\nabla\pid_k$  for $k\geq 3$ shows heavy loss of convergence rates for general second-order problems  (see \cite[Remark~4.3]{MR3460621} for more details) and hence we utilize $\Pi_{k-1}\nabla$  instead of $\nabla\pid_k$ in $B_h$.
\end{remark}
\subsection{Error estimates}
This subsection proves  the energy norm error estimate and the $L^2$ error estimate  through Aubin-Nitche duality arguments. 
\par For any $g\in L^2(\Omega)$, the Fredholm
theory  entails the existence of a unique solution to the adjoint problem that corresponds to \eqref{cp} given by \begin{align}-\dv(\ba\nabla\Phi)+\bb\cdot\nabla\Phi+\gamma\Phi=g.\label{adjoint}\end{align} That is, there exists a $\widetilde{s}>1$ such that the dual  solution $\Phi\in H^{\widetilde{s}}(\Omega)$ and satisfies the regularity estimate $\|\Phi\|_{\widetilde{s},\Omega}\leq C^*_{\text{reg}}\|g\|_{L^2(\Omega)}$.
\begin{theorem}[error estimates]\label{thm:err}
Let $u\in V\cap H^s(\Omega)$ solve \eqref{3.2} for $s\geq 1$ and $u_h\in V_h^k$ solve \eqref{dp} for the polynomial degree $k\geq \max\{1,s-1\}$. Set $\bs:= \ba \nabla u +\bb u$. For $f\in L^2(\Omega)$ and $Q=\Pi_k$, there exists a positive constant $C_3$ that depends on $\rho$ from \ref{M2} and coefficients $\ba, \bb, \gamma$ such that for sufficiently small $h$, the estimate given below holds,
\begin{align*}
    &C_3^{-1}(h^{-\min\{\widetilde{s}-1,1\}}\|u-u_h\|_{L^2(\Omega)}+|u-u_h|_{1,\Omega}) \leq |u-I_hu|_{1,\Omega}+|u-\pid_k u|_{1,\pw}\\&\hspace{6cm}+\|\bs - \Pi_{k-1}\bs\|_{L^2(\Omega)}+ \mathrm{osc}_1(\gamma u-f,\T_h).
\end{align*}
 The term $\mathrm{osc}_1(f,\T_h)$ in the above estimate vanishes for $f\in V^*$ and $Q=J$.
\end{theorem}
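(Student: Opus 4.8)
Proof proposal for Theorem~\ref{thm:err}.

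The plan is to run the Schatz--Aubin--Nitsche programme for non-coercive problems, in three steps: a discrete inf-sup condition for $B_h$, the energy estimate in best-approximation form, and an $L^2$ duality estimate closed by absorption for $h$ small. I would freely use the standard conforming VEM facts for variable coefficients from \cite{MR3460621,MR3671497}: boundedness of $B_h$ on $V_h^k+\p_k(\T_h)$; a G\aa rding inequality $B_h(w_h,w_h)\ge\alpha|w_h|_{1,\Omega}^2-C_G\|w_h\|_{L^2(\Omega)}^2$ for $w_h\in V_h^k$ (from the coercivity of $\ba$, the splitting $\nabla w_h=\Pi_{k-1}\nabla w_h+(1-\Pi_{k-1})\nabla(w_h-\pid_kw_h)$, the stability \eqref{stab} of $s_h$, and a Young absorption of the convection/reaction parts); and the consistency bound $|B(v_h,w_h)-B_h(v_h,w_h)|\le C\,\delta(z_h)\,|z_{h'}|_{1,\Omega}$ with $\{z_h,z_{h'}\}=\{v_h,w_h\}$ and consistency defect $\delta(z_h):=|z_h-\pid_kz_h|_{1,\pw}+\|\ba\nabla z_h+\bb z_h-\Pi_{k-1}(\ba\nabla z_h+\bb z_h)\|_{L^2(\Omega)}+h\|z_h\|_{1,\Omega}$ --- here the only delicate point is that the variable-coefficient crimes $\|(1-\Pi_{k-1})(\ba q)\|_{0,P}$ with $q\in\p_{k-1}$ are $O(h_P)$, since $\ba q$ is smooth times a polynomial and hence $|\ba q|_{k,P}\lesssim h_P^{-(k-1)}\|q\|_{0,P}$ by an inverse estimate. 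Together with Propositions~\ref{prop:est-polyPi}--\ref{prop:est-inter-forward}, \ref{PF} and the adjoint regularity $\Phi\in H^{\widetilde s}(\Omega)$, $\|\Phi\|_{\widetilde s,\Omega}\le C^*_{\mathrm{reg}}\|g\|_{L^2(\Omega)}$, $\widetilde s>1$, these give the discrete inf-sup as follows: for $v_h\in V_h^k$ let $\Psi\in V$ solve $B(w,\Psi)=(v_h,w)$ for all $w\in V$ and expand $\|v_h\|_{L^2(\Omega)}^2=B(v_h,\Psi-I_h\Psi)+B_h(v_h,I_h\Psi)+\big(B(v_h,I_h\Psi)-B_h(v_h,I_h\Psi)\big)$; boundedness, the consistency bound applied with the smooth argument $I_h\Psi$, $|\Psi-I_h\Psi|_{1,\Omega}\le Ch^{\min\{\widetilde s-1,1\}}\|\Psi\|_{\widetilde s,\Omega}$ and the regularity of $\Psi$ give $\|v_h\|_{L^2(\Omega)}\le C\big(h^{\min\{\widetilde s-1,1\}}|v_h|_{1,\Omega}+\sup_{w_h\in V_h^k\setminus\{0\}}B_h(v_h,w_h)/|w_h|_{1,\Omega}\big)$, and inserting this into the G\aa rding inequality with $w_h=v_h$ and absorbing the $h^{2\min\{\widetilde s-1,1\}}|v_h|_{1,\Omega}^2$ term for $h$ small yields $\beta_h|v_h|_{1,\Omega}\le\sup_{w_h\in V_h^k\setminus\{0\}}B_h(v_h,w_h)/|w_h|_{1,\Omega}$ with $\beta_h>0$ depending only on $\rho$ and the coefficients.

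For the energy estimate, fix $v_h\in V_h^k$ and $p_k\in\p_k(\T_h)$; the discrete equation \eqref{dp} and $B(u,w_h)=f(w_h)$ give, for $w_h\in V_h^k$, $B_h(u_h-v_h,w_h)=B(u-v_h,w_h)+\big(B(v_h,w_h)-B_h(v_h,w_h)\big)-f\big((1-Q)w_h\big)$. The first term is $\le M_\cb|u-v_h|_{1,\Omega}|w_h|_{1,\Omega}$; the second, by the consistency bound with $p_k=\pid_ku$, the triangle inequality $|v_h-\pid_kv_h|_{1,\pw}\le|u-v_h|_{1,\Omega}+|u-\pid_ku|_{1,\pw}$ and the identity $\bs=\ba\nabla u+\bb u$, contributes $C\big(|u-v_h|_{1,\Omega}+|u-\pid_ku|_{1,\pw}+\|\bs-\Pi_{k-1}\bs\|_{L^2(\Omega)}+h\|u\|_{1,\Omega}\big)|w_h|_{1,\Omega}$, the last summand being of higher order and, through the sharper coefficient-dependent form of the bound in \cite{MR3460621}, absorbable into the preceding two. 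The genuinely new ingredient is $f((1-Q)w_h)$: when $Q=\Pi_k$ and $f\in L^2(\Omega)$ it equals $((1-\Pi_k)f,(1-\Pi_k)w_h)$, which combined with the reaction part of the consistency term produces $((1-\Pi_k)(\gamma u-f),(1-\Pi_k)w_h)\le C\,\mathrm{osc}_1(\gamma u-f,\T_h)|w_h|_{1,\Omega}$; when $Q=J$ and $f\in V^*$ one writes $f(z)=(\bs,\nabla z)+(\gamma u,z)$ for $z\in V$, uses Theorem~\ref{thm:J}.b and~\ref{thm:J}.c to get $f((1-J)w_h)=(\bs-\Pi_{k-1}\bs,\nabla(w_h-Jw_h))+((1-\Pi_k)(\gamma u),w_h-Jw_h)$, and Theorem~\ref{thm:J}.d with $|w_h-\pid_kw_h|_{1,\pw}\le|w_h|_{1,\Omega}$ to bound it by $C\big(\|\bs-\Pi_{k-1}\bs\|_{L^2(\Omega)}+\mathrm{osc}_1(\gamma u,\T_h)\big)|w_h|_{1,\Omega}$; since $\mathrm{osc}_1(\gamma u,\T_h)\lesssim h^2|u|_{1,\Omega}$, no oscillation of $f$ remains. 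Dividing by $|w_h|_{1,\Omega}$, taking the supremum, using the discrete inf-sup, then choosing $v_h=I_hu$ and $|u-u_h|_{1,\Omega}\le|u-I_hu|_{1,\Omega}+|u_h-I_hu|_{1,\Omega}$ yields the asserted energy bound.

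For the $L^2$ estimate, given $g\in L^2(\Omega)$ let $\Phi\in V$ solve $B(v,\Phi)=(g,v)$ for all $v\in V$, so $(g,u-u_h)=B(u-u_h,\Phi)$; inserting $I_h\Phi$ and using \eqref{dp} as above gives $(g,u-u_h)=B(u-u_h,\Phi-I_h\Phi)+f((1-Q)I_h\Phi)-\big(B(u_h,I_h\Phi)-B_h(u_h,I_h\Phi)\big)$. The first term is $\le M_\cb|u-u_h|_{1,\Omega}\,Ch^{\min\{\widetilde s-1,1\}}\|g\|_{L^2(\Omega)}$; the second is treated as in the previous paragraph but now carries the smallness factor $\|(1-\Pi_k)I_h\Phi\|_{L^2(\Omega)}$ (for $Q=\Pi_k$) resp. $|I_h\Phi-\pid_kI_h\Phi|_{1,\pw}$ (for $Q=J$), each $\le Ch^{\min\{\widetilde s-1,1\}}\|g\|_{L^2(\Omega)}$ by the regularity of $\Phi$; the third, writing $u_h=I_hu-(I_hu-u_h)$ and applying the consistency bound with $I_h\Phi$ as the smooth argument, is $\le C\big(|u-u_h|_{1,\Omega}+\text{(best-approximation terms)}\big)h^{\min\{\widetilde s-1,1\}}\|g\|_{L^2(\Omega)}$. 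Supremising over $g$ gives $\|u-u_h\|_{L^2(\Omega)}\le Ch^{\min\{\widetilde s-1,1\}}\big(|u-u_h|_{1,\Omega}+\text{(best-approximation terms)}\big)$; substituting the energy bound, multiplying by $h^{-\min\{\widetilde s-1,1\}}$, and absorbing $Ch^{\min\{\widetilde s-1,1\}}\|u-u_h\|_{L^2(\Omega)}$ for $h$ small proves the theorem, with oscillation term $\mathrm{osc}_1(\gamma u-f,\T_h)$ when $Q=\Pi_k$ and only the dominated $\mathrm{osc}_1(\gamma u,\T_h)$ (hence nothing involving $f$) when $Q=J$.

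The main obstacle is the reorganisation of the consistency error of the \emph{modified} scheme: one must (i) see that the variable-coefficient crimes enter at higher order via the inverse-estimate control of $|\ba q|_{k,P}$; (ii) make the $\Pi_{k-1}\nabla$-structure of $B_h$ cooperate with the distributional identity $f=-\dv\bs+\gamma u$ so as to reproduce exactly $\|\bs-\Pi_{k-1}\bs\|_{L^2(\Omega)}$ through the orthogonality $\nabla(w_h-Jw_h)\perp\p_{k-1}(\T_h)^2$ of Theorem~\ref{thm:J}.c; and (iii) control the right-hand side crime $f((1-Q)w_h)$, which in the rough case $Q=J$ cannot be handled by testing $f$ against polynomials and relies essentially on all of Theorem~\ref{thm:J}, with the extra check that the residual reaction term $((1-\Pi_k)(\gamma u),w_h-Jw_h)$ is of higher order so that no oscillation of the genuinely $V^*$-datum $f$ is needed. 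Once this is done, the indefiniteness is handled routinely by the Schatz/Aubin--Nitsche machinery and the adjoint regularity.
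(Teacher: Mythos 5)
Your proposal is correct in substance and shares the paper's skeleton (discrete inf-sup for $B_h$, a consistency/crime decomposition, the companion's orthogonalities for the rough datum, Aubin--Nitsche with the adjoint regularity $\widetilde s$), but it organises the error equation differently. The paper compares $B_h(I_hu,v_h)$ with $B(u,Jv_h)$, i.e.\ it inserts the companion as the test function in the \emph{continuous} problem, and then groups terms as $T_1$--$T_4$ so that $\|\bs-\Pi_{k-1}\bs\|_{L^2(\Omega)}$ arises from $(\bs,\Pi_{k-1}\nabla v_h-\nabla Jv_h)_{L^2(\Omega)}$ via Theorem~\ref{thm:J}.\ref{c}, and the oscillation $\mathrm{osc}_1(\gamma u-f,\T_h)$ from the combined term $(\gamma u,\Pi_kv_h-Jv_h)_{L^2(\Omega)}+f(Jv_h)-f_h(v_h)$; the discrete inf-sup is simply cited. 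You instead keep the conforming test function $w_h$ in $B$ (legitimate since $V_h^k\subset V$), isolate the right-hand-side crime as $f((1-Q)w_h)$, recover $\|\bs-\Pi_{k-1}\bs\|_{L^2(\Omega)}$ from the consistency defect at $v_h=I_hu$ when $Q=\Pi_k$ and from the distributional identity $f(z)=(\bs,\nabla z)_{L^2(\Omega)}+(\gamma u,z)_{L^2(\Omega)}$ together with Theorem~\ref{thm:J}.\ref{b}--\ref{c} when $Q=J$, and you prove the discrete inf-sup yourself by a G\aa rding/Schatz duality-and-absorption argument rather than quoting it. Both routes rely on exactly the same properties of $J$ and yield the same conclusion, including the disappearance of the $f$-oscillation for $Q=J$; your version is more self-contained on the inf-sup side, while the paper's grouping through $B(u,Jv_h)$ makes every term visibly a product of an approximation error of $u$ (or $\bs$) with an $O(1)$ factor of the test function.

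Two points need tightening before your sketch matches the stated best-approximation bound. First, the standalone summand $h\|z_h\|_{1,\Omega}$ in your consistency defect $\delta(z_h)$ is \emph{not} controlled by the right-hand side of the theorem (for $k\ge2$ it would cap the rate at $O(h)$), and it is not literally ``absorbable into the preceding two''; the correct statement is that, if you keep $\ba\nabla z_h+\bb z_h$ grouped and let every $O(h)$ coefficient-variation factor (e.g.\ $\|\ba-\Pi_0\ba\|_{L^\infty}$, and the Poincar\'e--Friedrichs factor in $\|(1-\Pi_k)w_h\|_{L^2}$) multiply a defect or approximation quantity, no such standalone term ever appears---your own parenthetical about the sharper coefficient-dependent bound is exactly the repair needed, so make it explicit. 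Second, the reaction part of the crime involves $\gamma\Pi_kv_h$, not $\gamma u$, so producing $((1-\Pi_k)(\gamma u-f),(1-\Pi_k)w_h)_{L^2(\Omega)}$ requires an add-and-subtract of $\gamma u$, which costs only $\|u-\Pi_kI_hu\|_{L^2(\Omega)}$-type terms already present in the bound; with these repairs your argument delivers the theorem as stated.
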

\begin{proof}[Proof of energy error estimate]
Since the discrete problem \eqref{dp} is well-posed, the bilinear form $B_h$ satisfies the discrete inf-sup condition for sufficiently small $h$ with a positive constant $\beta$ \cite[Lemma~5.7]{MR3245124} and implies the existence of a $v_h\in V_h^k$ for $I_hu-u_h$ with 
\begin{align}
  &\beta^{-1}|I_hu-u_h|_{1,\Omega}|v_h|_{1,\Omega} \le B_h(I_hu-u_h,v_h)= B_h(I_hu,v_h)-B(u,Jv_h) +f(Jv_h)-f_h(v_h)\nonumber\\&=\big((\ba (\Pi_{k-1}\nabla I_hu-\nabla u)+\bb (\Pi_{k}I_hu-u),\Pi_{k-1}\nabla v_h)_{L^2(\Omega)}+(\gamma (\Pi_{k}I_hu-u),\Pi_k v_h)_{L^2(\Omega)}\big)\nonumber\\&\quad+(\bs,\Pi_{k-1}\nabla v_h- \nabla J v_h)_{L^2(\Omega)}+ \big((\gamma u,\Pi_kv_h-Jv_h)_{L^2(\Omega)}+f(Jv_h)-f_h(v_h)\big)\nonumber\\&\quad+s_h((1-\pid_k)I_hu,(1-\pid_k)v_h)=:T_1+T_2+T_3 +T_4.\label{3.4}
\end{align}
The second step above follows from the continuous problem \eqref{cp} and the discrete problem \eqref{dp}, and the third step from the definitions of $B_h$ and $B$ from Subsection~\ref{subsec:3.1} and  elementary algebra.
\par For the first term $T_1$, a Cauchy-Schwarz inequality and the $L^2$ stability of $\Pi_{k-1}$ and $\Pi_k$ from \eqref{est:L2} show
\begin{align}
  T_1&\leq C_{\ba,\cb,\gamma}\big(\|\Pi_{k-1}\nabla I_hu-\nabla u\|_{L^2(\Omega)}|v_h|_{1,\Omega}+\|\Pi_{k}I_hu-u\|_{L^2(\Omega)}(|v_h|_{1,\Omega}+\|v_h\|_{L^2(\Omega)})\big)\nonumber\\& \leq C_{\ba,\bb,\gamma}(1+C_{\text{F}}) \Big(|u-I_hu|_{1,\Omega}+\|(1- \Pi_{k-1})\nabla I_hu\|_{L^2(\Omega)}\nonumber\\&\hspace{3cm}+\|u-I_hu\|_{L^2(\Omega)}+\|(1-\Pi_k)I_hu\|_{L^2(\Omega)}\Big)|v_h|_{1,\Omega}\label{3.6}
\end{align}
with triangle inequalities and the Friedrichs inequality $\|v_h\|_{L^2(\Omega)}\leq C_{\text{F}}|v_h|_{1,\Omega}$ \cite{brenner2008mathematical} in the last step. The $L^2$ orthogonalities of $\Pi_{k-1}$ resp. $\Pi_k$ from \eqref{def:L2} implies \begin{align}\| (1- \Pi_{k-1})\nabla I_hu\|_{L^2(\Omega)}&\leq \inf_{\chi_k\in\p_k(\T_h)}\|\nabla I_hu -\nabla \chi_k\|_{L^2(\Omega)}\leq |I_hu -\pid_ku|_{1,\pw},\label{est:3.8} \\ \text{resp.}\quad\|(1-\Pi_k)I_hu\|_{L^2(\Omega)}&\leq \inf_{\chi_k\in\p_k(\T_h)}\|I_hu-\chi_k\|_{L^2(\Omega)}\leq \|I_hu-\pid_ku\|_{L^2(\Omega)}.\label{3.9}\end{align} The above displayed estimates and a triangle inequality in \eqref{3.6} prove
\begin{align}
   T_1&\leq C_{\ba,\bb,\gamma}(1+C_{\text{F}})(|u-I_hu|_{1,\Omega}+|u-\pid_ku|_{1,\pw}\nonumber\\&\quad+\|u-I_hu\|_{L^2(\Omega)}+\|u-\pid_ku\|_{L^2(\Omega)})|v_h|_{1,\Omega}\nonumber\\&\leq C_{\ba,\bb,\gamma}(1+C_{\text{F}})(1+C_\pf)(|u-I_hu|_{1,\Omega}+|u-\pid_ku|_{1,\pw})|v_h|_{1,\Omega}
\end{align}
with Proposition~\ref{PF} in the last step. For the second term $T_2$, the $L^2$ orthogonality of $\Pi_{k-1}$ and Theorem~\ref{thm:J}.c lead to
\begin{align}
    T_2&=((1-\Pi_{k-1})\bs,\Pi_{k-1}\nabla v_h-\nabla Jv_h)_{L^2(\Omega)}.\label{eq:3.8}
\end{align}
  The triangle inequality $\|\Pi_{k-1}\nabla v_h-\nabla Jv_h\|_{L^2(\Omega)}\leq \|(1-\Pi_{k-1})\nabla v_h\|_{L^2(\Omega)}+|v_h-Jv_h|_{1,\Omega}$, \eqref{est:3.8} with $I_hu=v_h$ and $\chi_k=0$, and Theorem~\ref{thm:J}.d 
  {plus \eqref{est:pid}} show
\begin{align}
    T_2\leq (1+C_\J)\|(1-\Pi_{k-1})\bs\|_{L^2(\Omega)}|v_h|_{1,\Omega}.
\end{align}
   For the third term $T_3$ with $Q=\Pi_k$, the $L^2$ orthogonality \eqref{def:L2} of $\Pi_k$ and Theorem~\ref{thm:J}.c show
\begin{align}
  T_3 &= ((1-\Pi_k) (\gamma u-f),\Pi_kv_h-Jv_h)_{L^2(\Omega)}=((1-\Pi_k) (\gamma u-f),\pid_kv_h-Jv_h)_{L^2(\Omega)}\label{eqn:3.11}
\end{align}
with the last equality again from \eqref{def:L2}. Proposition~\ref{PF}  implies $\|\pid_kv_h-Jv_h\|_{L^2(\Omega)}\leq C_\pf|h_{\T_h}(\pid_kv_h-Jv_h)|_{1,\pw}$. A triangle inequality,   Theorem~\ref{thm:J}.d, and \eqref{est:pid} show
\begin{align*}|\pid_kv_h-Jv_h|_{1,\pw}&\leq |v_h-\pid_kv_h|_{1,\pw}+|v_h-Jv_h|_{1,\Omega}\\&\leq (1+C_\J) |v_h-\pid_kv_h|_{1,\pw}\leq  (1+C_\J) |v_h|_{1,\Omega}.\end{align*}
This and \eqref{eqn:3.11} result in
\begin{align}
 T_3 \leq C_\pf   (1+C_\J) \mathrm{osc}_1(\gamma u-f,\T_h)|v_h|_{1,\Omega}.
\end{align}
For $Q=J$ in $T_3$, the term $f(Jv_h)-f_h(v_h)$ in \eqref{3.4} vanishes and in that case the oscillation $\text{osc}_1(f,\T_h)$ vanishes from the above bound for $T_3$.   The term $T_4$ is bounded as in \eqref{eqn:2.24} with $v_h=I_hu$ and $e_h=v_h$. This and the combination of the aforementioned estimates for $|I_hu-u_h|_{1,\Omega}$, and a triangle inequality conclude the proof for $|u-u_h|_{1,\Omega}$ with {$C_3:= 1+\beta((1+C_\pf)(1+C_\J+C_{\ba,\bb,\gamma}(1+C_{\text{F}}))+C_\s)$}.
\end{proof}
\begin{proof}[Proof of $L^2$ error estimate]
The proof is based on  Aubin-Nitsche duality arguments. For $g=e_h:=I_hu-u_h\in L^2(\Omega)$, there exists a unique dual solution $\Phi\in V\cap H^{\widetilde{s}}(\Omega)$ to \eqref{adjoint}
with the regularity estimate $\|\Phi\|_{\widetilde{s},\Omega}\leq C^*_{\text{reg}}\|e_h\|_{L^2(\Omega)}$. Test \eqref{adjoint} with $I_hu-u_h$ and apply an integration by parts to obtain
\begin{align}
   \|e_h\|^2_{L^2(\Omega)}&= B(e_h,\Phi)\nonumber\\&=B(e_h,\Phi-I_h\Phi)+(B(e_h,I_h\Phi)-B_h(e_h,I_h\Phi))+B_h(e_h,I_h\Phi)\label{3.15}
\end{align}
with an algebraic manipulation in the last step. The boundedness of $B$ from \eqref{3.3} and Proposition~\ref{prop:est-inter-forward} show
\begin{align}
   B(e_h,\Phi-I_h\Phi)&\leq M_\cb C_\I h^{\widetilde{s}-1}|e_h|_{1,\Omega} |\Phi|_{\widetilde{s},\Omega}.
\end{align}
The definitions of $B$ and $B_h$ simplify to
\begin{align}
    &B(e_h,I_h\Phi)-B_h(e_h,I_h\Phi)= \big((\ba \nabla e_h,\nabla I_h\Phi)_{L^2(\Omega)}-(\ba \Pi_{k-1}\nabla e_h,\Pi_{k-1}\nabla I_h\Phi)_{L^2(\Omega)}\big)\nonumber\\&\quad+\big((e_h,\bb\cdot\nabla I_h\Phi)_{L^2(\Omega)}-(\Pi_ke_h,\bb\cdot\Pi_{k-1}\nabla I_h\Phi)_{L^2(\Omega)}\big)+\big((\gamma e_h,I_h\Phi)_{L^2(\Omega)}\nonumber\\&\quad-(\gamma\Pi_ke_h,\Pi_k I_h\Phi)_{L^2(\Omega)}\big)-s_h((1-\pid_k)e_h,(1-\pid_k)I_h\Phi)=:T_5+T_6+T_7+T_8.
\end{align}
For the term $T_5$,  elementary algebra and the $L^2$ orthogonality \eqref{def:L2} of  $\Pi_{k-1}$ show 
\begin{align}
  T_5&=(\ba\nabla e_h,(1-\Pi_{k-1})\nabla I_h\Phi)_{L^2(\Omega)}+\big((1-\Pi_{k-1})\nabla e_h,(\ba-\Pi_0\ba)\Pi_{k-1}\nabla I_h\Phi\big)_{L^2(\Omega)} \nonumber\\&\leq C_\ba|e_h|_{1,\Omega}(|\Phi-I_h\Phi|_{1,\Omega}+|\Phi-\pid_k\Phi|_{1,\pw})+|e_h|_{1,\Omega}\|\ba-\Pi_0\ba\|_{L^\infty(\Omega)}|I_h\Phi|_{1,\Omega}\label{3.18.1}
\end{align}
with a Cauchy-Schwarz inequality, \eqref{est:3.8} for  $u=\Phi$ followed by a triangle inequality and for $I_hu =e_h$ and $\chi_k=0$ in the last estimate. The Bramble-Hilbert lemma \cite[Theorem~4.3.8]{brenner2008mathematical} implies \begin{align}\|\ba-\Pi_0\ba\|_{L^\infty(\Omega)}\leq C_{\text{BH}}h|\ba|_{1,\infty}.\label{bh}\end{align} A triangle inequality and Proposition~\ref{prop:est-inter-forward} show $|I_h\Phi|_{1,\Omega}\leq |I_h\Phi-\Phi|_{1,\Omega}+|\Phi|_{1,\Omega}\leq (1+C_\I)|\Phi|_{1,\Omega}$. This, \eqref{bh}, and Propositions~\ref{prop:est-polyPi} and \ref{prop:est-inter-forward} in \eqref{3.18.1} prove
\begin{align}
    T_5\leq (C_\ba (C_\I+C_\apx)+C_\text{BH}(1+C_\I))h^{\min\{\widetilde{s}-1,1\}}|e_h|_{1,\Omega}|\Phi|_{\widetilde{s},\Omega}.
\end{align}
 Analogous arguments  bound $T_6$ and $T_7$ by
\begin{align}
    T_6&\leq C_\bb(C_\F(C_\I+C_\apx)+C_\pf(1+C_\I))h^{\min\{\widetilde{s}-1,1\}}|e_h|_{1,\Omega}|\Phi|_{\widetilde{s},\Omega}\\
    T_7&\leq C_\gamma C_\F(C_\I+C_\apx+C_\pf(1+C_\I)) h|e_h|_{1,\Omega}|\Phi|_{\widetilde{s},\Omega}.
\end{align}
For the stability term $T_8$, the bound \eqref{stab}, and \eqref{est:pid} with $v=e_h$ and $\chi_k=0$ for the first term and with $v=I_h\Phi$ and $\chi_k=\pid_k\Phi$ for the second term  lead to 
\begin{align}
    T_8\leq C_\s |(1-\pid_k)e_h|_{1,\pw}|(1-\pid_k)I_h\Phi|_{1,\pw}&\leq C_\s |e_h|_{1,\Omega}|I_h\Phi-\pid_k\Phi|_{1,\pw}\nonumber\\&\leq C_\s(C_\I+C_\apx)h^{\widetilde{s}-1}|e_h|_{1,\Omega}|\Phi|_{\widetilde{s},\Omega}.
\end{align}
The last step results from Propositions~\ref{prop:est-polyPi} and \ref{prop:est-inter-forward}. It remains to estimate the term $B_h(e_h,I_h\Phi)$ in \eqref{3.15}. The continuous problem \eqref{3.2} and the discrete problem \eqref{dp} imply
\begin{align}
    &B_h(e_h,I_h\Phi) = (B_h(I_hu,I_h\Phi)-B(u,JI_h\Phi))+(f(JI_h\Phi)-f_h(I_h\Phi))\nonumber\\&=(\Pi_{k-1}\nabla I_hu-\nabla u,(\ba-\Pi_0\ba)\Pi_{k-1}\nabla I_h\Phi)_{L^2(\Omega)} \nonumber\\&\quad+(\Pi_kI_hu-u,\bb\cdot\Pi_{k-1}\nabla I_h\Phi+\gamma\Pi_kI_h\Phi)_{L^2(\Omega)}+(\bs,\Pi_{k-1}\nabla I_h\Phi-\nabla JI_h\Phi)_{L^2(\Omega)}\nonumber\\&\quad+\Big((\gamma u,\Pi_kI_h\Phi-JI_h\Phi)_{L^2(\Omega)}+f(JI_h\Phi)-f_h(I_h\Phi)\Big)+s_h((1-\pid_k)I_hu,(1-\pid_k)I_h\Phi)\label{3.23}
\end{align}
with algebraic manipulations and the orthogonality $(\Pi_{k-1}\nabla I_hu-\nabla u,\chi_{k-1})_{L^2(\Omega)} = ((\Pi_{k-1}-1)\nabla I_hu,\chi_{k-1})_{L^2(\Omega)} + (\nabla (I_hu- u),\chi_{k-1})_{L^2(\Omega)}=0$ for any $\chi_{k-1}\in(\p_{k-1}(\T_h))^2$ from \eqref{def:L2} and \eqref{int:ortho} for the first term in the last step. The equality is analogous to \eqref{3.4} for $v_h=I_h\Phi$  with the only difference  the first term  is handled as in $T_1$ and leads to an additional power of $h$. Hence for the first term in \eqref{3.23}, this and \eqref{bh} prove
\begin{align}
    (\Pi_{k-1}\nabla I_hu-\nabla u,(\ba-\Pi_0\ba)\Pi_{k-1}\nabla I_h\Phi)_{L^2(\Omega)}&\nonumber\\&\hspace{-5cm}\leq 2C_\ba C_\text{BH}(1+C_\I)h(|u-I_hu|_{1,\Omega}+|u-\pid_ku|_{1,\pw})|\Phi|_{1,\Omega}. \label{3.25}
\end{align}
For the extra power of $h$ in the second term from \eqref{3.23}, a triangle inequality, and \eqref{3.9} show $$\|\Pi_kI_hu-u\|_{L^2(\Omega)}\leq 2\|u-I_hu\|_{L^2(\Omega)}+\|u-\pid_ku\|_{L^2(\Omega)}\leq 2C_\pf h(|u-I_hu|_{1,\Omega}+|u-\pid_ku|_{1,\pw})$$ with Proposition~\ref{PF} in the end. The last three terms in \eqref{3.23} are bounded analogously as  $T_2-T_4$ from \eqref{3.4} and the extra power of $h$ arises from the interpolation and polynomial projection estimates of $\Phi$. This, the combination  \eqref{3.15}-\eqref{3.25}, and the regularity estimate $|\Phi|_{\widetilde{s},\Omega}\leq C^*_{\text{reg}}\|e_h\|_{L^2(\Omega)}$ show that there exists a positive constant $C_4$ with
\begin{align}
    \|e_h\|_{L^2(\Omega)}\leq C_4h^{\min\{\widetilde{s}-1,1\}}(|e_h|_{1,\Omega}+|u-I_hu|_{1,\Omega}+|u-\pid_ku|_{1,\pw}+\text{osc}_1(\gamma u -f,\T_h)).\label{3.26}
\end{align}
Proposition~\ref{PF} implies that $\|u-I_hu\|_{L^2(\Omega)}\leq C_\pf h |u-I_hu|_{1,\Omega}$. This, the bound for $|e_h|_{1,\Omega}$ from the proof of energy error estimate in \eqref{3.26}, and the triangle inequality 
$\|u-u_h\|_{L^2(\Omega)}\leq \|u-I_hu\|_{L^2(\Omega)}+\|I_hu-u_h\|_{L^2(\Omega)}$
conclude the proof of the $L^2$ error estimate {with a re-labelled constant $C_3$}.
\end{proof}


	\section{Inverse problem}
Given a measurement $m$, this section deals with the inverse problem to reconstruct the source field of the Poisson problem. 	
 
 The problem is defined as follows. For given measurements of $u$  on {\it finite locations} in the polygonal domain $\Omega \subset {\mathbb R}^2$ with boundary $\partial \Omega$, determine $f$ such that 
 \begin{align} \label{inv}
 - \Delta u = f \text{ in } \Omega.
 \end{align}
  Since \eqref{inv} is ill-posed \cite{MR3245124}, Subsection~4.1 introduces a regularized problem following Tikhonov regularization technique. This is followed by a set of assumptions that are crucial in the rest of the paper for obtaining error estimates. Subsection~4.2 analyses the given measurement data and its discrete VEM approximation through two computable operators $\Pi_k$ and $J$. The last subsection presents discrete VEM spaces for the inverse problem, the discrete inverse problem, and the corresponding  error estimates.
	\subsection{Regularized problem and assumptions}
	Given a source $f$, the Poisson forward 
  problem in \eqref{eq:weak-fwd}  seeks the density field $u$. The inverse problem \eqref{inv} approximates  the source $f\in F:=H^p(\Omega)$ for $p\geq 0$ from the finite measurement data of $u$  and we assume that  this data is given in terms of the linear functionals. For example, the measurement functionals could be average of $u$ in different pockets of the domain denoted as $ \omega_1, \omega_2,$ and $ \omega$ in Figure~\ref{fig:m-domains}. 
  \begin{figure}[H]
	\begin{subfigure}{.5\textwidth}
		\centering
		\includegraphics[width=0.9\linewidth]{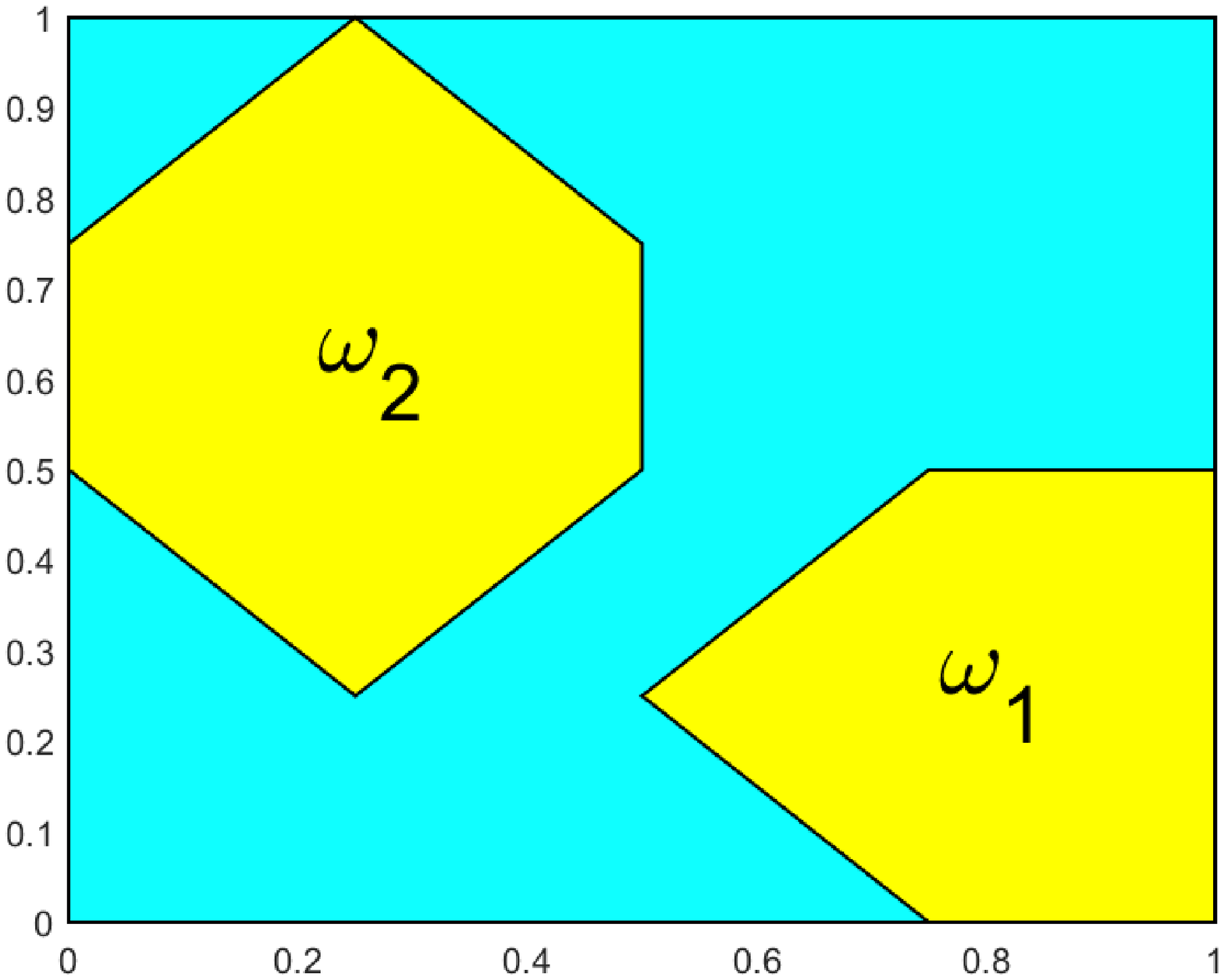}
	\end{subfigure}
\begin{subfigure}{.5\textwidth}
	\centering
	\includegraphics[width=0.9\linewidth]{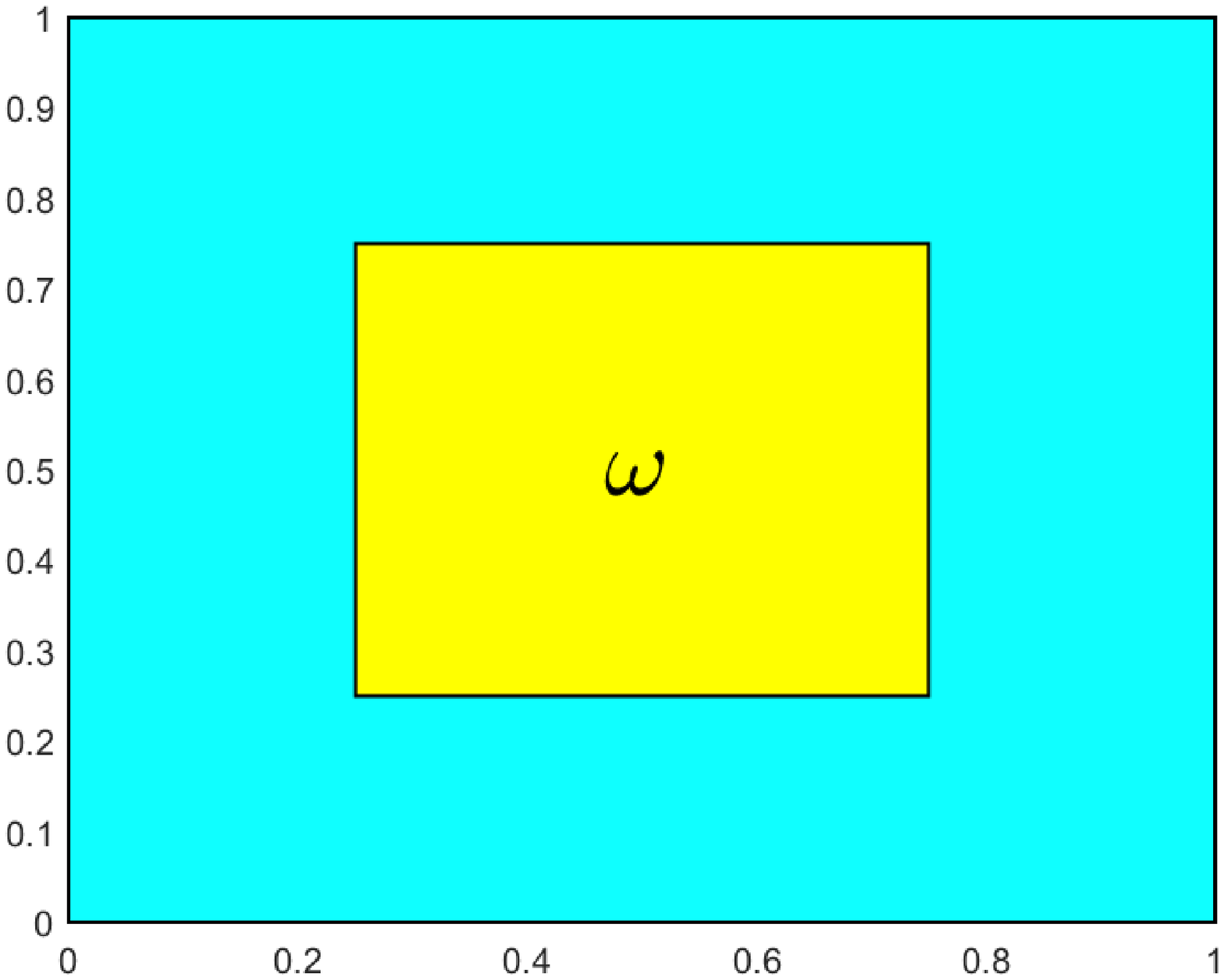}
\end{subfigure}
	\caption{Examples of measurement domains.}
	\label{fig:m-domains}
\end{figure}
  Given the measurement functionals $\mathbb{h}_1,\dots,\mathbb{h}_N\in V^*$, define $H:V^*\to\mathbb{R}^N$ by \[Hv:=(\mathbb{h}_i(v))_{i=1,\dots,N}\quad\text{for}\;v\in V.\]
	Assume that $\mathbb{h}_i$'s are uniformly bounded and this implies that $H$ is a bounded operator, that is, there exists a positive constant $C_\h$ (independent of $v$) such that \begin{align}\|Hv\|\leq C_\h\|v\|_{1,\Omega}\quad\text{ for} \; v\in V.\label{bd_meas}\end{align}
	\textbf{Auxiliary problems}. Given  $\mathbb{h}_1,\dots,\mathbb{h}_N\in V^*$, the auxiliary problems seek the measurement functions $\xi_1,\dots,$$\xi_N\in V$ such that
	\begin{align}
	    a(\xi_i,v) = \mathbb{h}_i(v)\quad\text{for all}\;v\in V\;\text{and}\;i=1,\dots,N.\label{ap}
	\end{align}
 The ill-posedness of  \eqref{inv} is overcome by a Tikhonov regularization   \cite{MR3245124}. In this paper, we define the regularized problem as 
\begin{align} \label{eq:regularized}
f_r= \arg \min_{f \in F} \left\{ \| m-H \cK f\|^2 + \alpha \, \cB(f,f) \right\}
\end{align}
 with a regularization parameter $\alpha>0$ and a  bilinear form $\cB(\cdot, \cdot): F\times F \rightarrow {\mathbb R}$ satisfying \ref{A0} below.
 \begin{enumerate}[label={(\bfseries A0)}]
	\item\label{A0} \textbf{Continuity and coercivity}.
 There exist  positive constants $C^*$ and $C_*$ that depend exclusively on $\rho$ from \ref{M2} such that
\begin{align}
    \cB(f,g)\leq C^*\|f\|_{p,\Omega}\|g\|_{p,\Omega}\;\text{and}\; \cB(f,f)\geq C_*\|f\|_{p,\Omega}^2\quad\text{for all}\;f,g\in F.\label{b_bounds}
\end{align}
\end{enumerate}
 
 \medskip
 \noindent\textbf{Regularized problem}. Denote the actual source field by $f_{\text{true}}\in F$. 
 Given the measurement $m:=HKf_{\text{true}}$ of $u$, we  reconstruct the source field of the Poisson problem in a closed subspace of $F$.   For a regularization parameter $\alpha$, the regularized problem seeks $f_r\in F$ such that
	\begin{align}
	    \A_r(f_r,g) =\cL_r(g) \qquad\text{for all}\;g\in F\label{eq:cts_IP}
	\end{align}
	with $\A_r(f,g) := (HKf)^T(HKg)+\alpha \cB(f,g)$, and the continuous right-hand side $\cL_r(g):=m^T(HKg)$ for  $f,g\in F$.
  The bilinear form $\A_r(\cdot,\cdot)$ is continuous and coercive on $F\times F$ (see \cite[Lemma~3.1]{MR3245124} for a proof), and consequently Lax-Milgram lemma implies the existence of a unique solution $f_r \in F$ to \eqref{eq:cts_IP}.
  \par The rest of the paper assumes the regularity results \ref{A1}-\ref{A3} stated below.
	\begin{enumerate}[label={(\bfseries A\arabic*)}]
	\item\label{A1} \textbf{Regularity of the forward solution}. For a given $p\geq 0$, there exists an $s\geq 1$, such that for every $f\in H^p(\Omega)$, the solution $u:=Kf$ of the problem \eqref{eq:weak-fwd} belongs to $H^s(\Omega)$ with a positive constant $C_{\mathrm{reg1}}$ (independent of $f$) and 
	\begin{align}
	    \|u\|_{s,\Omega}\leq C_{\mathrm{reg1}}\|f\|_{p,\Omega}.
	\end{align}
	\item\label{A2} \textbf{Regularity of measurement functions}. Given  the measurement functionals $h_i$, the functions $\xi_i$ belong to $H^r(\Omega)$ for some $r\geq 1$ and there exists a positive constant $C_{\mathrm{reg2}}$ with 
	\begin{align}
	    \|\xi_i\|_{r,\Omega}\leq C_{\mathrm{reg2}} \qquad \text{for}\;i=1,\dots,N. 
	\end{align}
	\item \label{A3} \textbf{Regularity of the reconstructed source}. There exist a $q\geq p$  and  a positive constant $C_{\mathrm{reg3}}$ such that the regularized solution $f_r$ belongs to $H^{q}(\Omega)$ with 
	\begin{align}
	    \|f_r\|_{q,\Omega}\leq C_{\mathrm{reg3}}\|m\|.
	\end{align}
	
	\end{enumerate}
 The notation in the regularity assumptions \ref{A1}-\ref{A3} follows from 
 \cite{MR3245124}. 

\subsection{Discrete measurement functionals}	
For any $f\in F$, recall the inner product $a(\bullet,\bullet)=(\nabla\bullet,\nabla\bullet)_{L^2(\Omega)}$ on $V\times V$ and the solution operator $\cK$ with $a(\cK f,\bullet)=f(\bullet)$ on $V$ from \eqref{eq:weak-fwd}. Also recall the discrete bilinear form $a_h(\bullet,\bullet)=a_\pw(\pid_k\bullet,\pid_k\bullet)+s_h((1-\pid_k)\bullet,(1-\pid_k)\bullet)$ on $V_h^k\times V_h^k$ from \eqref{eq:fweak-VE}  and the discrete solution operator $\cK_h$ with $a_h(\cK_h f,\bullet)=f_h(\bullet)=f(Q\bullet)$ on $V_h^k$.

Define the discrete counterpart $H_h:V_h^k\to\mathbb{R}^N$ of $H$, for $v_h\in V_h^k$, by 
\begin{align}
    H_hv_h:=HQv_h:=(\mathbb{h}_i(Qv_h))_{i=1,\dots,N} \label{dm}
\end{align}
with the two choices  $Q=\Pi_k$ when $\mathbb{h}_i\in L^2(\Omega)$ and $Q=J$ when $\mathbb{h}_i\in V^*$ for $i=1,\dots,N$. 
This leads to a discrete approximation $m_h:=H_h\cK_hf$ of a measurement $m= H\cK f$ for any $f\in F$.

\begin{theorem}[measurement approximation]\label{thm:meas}
Let $u:=\cK f$ and $u_h:=\cK_hf$. The two choices of $Q$ in the discrete problem $a_h(u_h,v_h)=f(Qv_h)$ from \eqref{eq:fweak-VE} and in the discrete measurements functionals $HQv_h$ from \eqref{dm} show
\begin{enumerate}[label=$(\alph*)$]
\item \label{thm:4.1.a}for $f,\mathbb{h}_i\in V^*$ and $Q=J$
\begin{align*}
    C_5^{-1}\|m-m_h\|&\leq (|u-u_h|_{1,\Omega}+|u-\pid_ku|_{1,\mathrm{pw}})\nonumber\\&\quad\times\sum_{i=1}^{N}\inf_{v_h\in V_h^k}(|\xi_i-v_h|_{1,\Omega}+|v_h-\pid_kv_h|_{1,\mathrm{pw}}),
\end{align*}
\item\label{thm:4.1.b} for $f,\mathbb{h}_i\in L^2(\Omega)$ and $Q=\Pi_k$  
\begin{align*}
    C_6^{-1}\|m-m_h\|&\leq (|u-u_h|_{1,\Omega}+|u-\pid_ku|_{1,\mathrm{pw}}+\mathrm{osc}_1(f,\T_h))\nonumber\\&\quad\times\sum_{i=1}^{N}\Big(\inf_{v_h\in V_h^k}(|\xi_i-v_h|_{1,\Omega}+|v_h-\pid_kv_h|_{1,\mathrm{pw}})+\mathrm{osc}_1(\mathbb{h}_i,\T_h)\Big)
\end{align*}
\end{enumerate}
with positive constants $C_5$ and $C_6$ that depend exclusively on $\rho$ from \ref{M2}. For $Q=J$ in \eqref{eq:fweak-VE} resp. \eqref{dm}  and $Q=\Pi_k$ in \eqref{dm} resp. \eqref{eq:fweak-VE}, the oscillation term $\mathrm{osc}_1(f,\T_h)$ resp. $\mathrm{osc}_1(\mathbb{h}_i,\T_h)$ vanishes. 
\end{theorem}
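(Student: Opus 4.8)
\textit{Proof strategy.}
The plan is to bound each component $\mathbb{h}_i(u)-\mathbb{h}_i(Qu_h)$ of $m-m_h$ separately, using the auxiliary problem \eqref{ap} and the structure of the modified scheme \eqref{eq:fweak-VE}, and then to sum over $i$ via $\|m-m_h\|\le\sum_{i=1}^N|\mathbb{h}_i(u)-\mathbb{h}_i(Qu_h)|$ (the loss of a $\sqrt N$ is absorbed in the constant). A repeatedly used auxiliary estimate is the triangle inequality combined with the stability \eqref{pid_stab} of $\pid_k$, namely
$|u_h-\pid_k u_h|_{1,\pw}\le 2|u-u_h|_{1,\Omega}+|u-\pid_k u|_{1,\pw}$,
which converts any $|u_h-\pid_k u_h|_{1,\pw}$-type quantity involving the \emph{discrete} solution into the two terms of the first factor.

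Consider first case \ref{thm:4.1.a} with $Q=J$. Since $Ju_h\in V$, the auxiliary problem \eqref{ap} tested with $u$ and with $Ju_h$ gives $\mathbb{h}_i(u)-\mathbb{h}_i(Ju_h)=a(\xi_i,u-Ju_h)$. For an arbitrary $v_h\in V_h^k$ I would split $a(\xi_i,u-Ju_h)=a(\xi_i-v_h,u-Ju_h)+a(v_h,u_h-Ju_h)+a(v_h,u-u_h)$. The first summand is handled by a Cauchy--Schwarz inequality, $|u-Ju_h|_{1,\Omega}\le|u-u_h|_{1,\Omega}+|u_h-Ju_h|_{1,\Omega}$, and Theorem~\ref{thm:J}.d together with the triangle inequality noted above. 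For the second, Theorem~\ref{thm:J}.c lets one subtract $\nabla\pid_k v_h$, so $a(v_h,u_h-Ju_h)=a_\pw(v_h-\pid_k v_h,u_h-Ju_h)$, again bounded via Theorem~\ref{thm:J}.d. Both contributions take the form $(|\xi_i-v_h|_{1,\Omega}+|v_h-\pid_k v_h|_{1,\pw})(|u-u_h|_{1,\Omega}+|u-\pid_k u|_{1,\pw})$, up to a constant depending only on $C_\J$ and $\rho$.

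The third summand $a(v_h,u-u_h)$ carries the real difficulty, since no Galerkin orthogonality is available. I would use symmetry of $a$ and \eqref{eq:weak-fwd} to write $a(v_h,u-u_h)=f(v_h)-a_\pw(u_h,v_h)$, then insert $f(v_h)=a_h(u_h,v_h)+f(v_h-Jv_h)$ from \eqref{eq:fweak-VE}, obtaining $a(v_h,u-u_h)=\big(a_h(u_h,v_h)-a_\pw(u_h,v_h)\big)+f(v_h-Jv_h)$. The consistency term collapses: by the defining orthogonality \eqref{pid} of $\pid_k$ the two mixed terms cancel, leaving $a_h(u_h,v_h)-a_\pw(u_h,v_h)=s_h\big((1-\pid_k)u_h,(1-\pid_k)v_h\big)-a_\pw\big((1-\pid_k)u_h,(1-\pid_k)v_h\big)$, bounded by $(1+C_\s)|u_h-\pid_k u_h|_{1,\pw}|v_h-\pid_k v_h|_{1,\pw}$ through \eqref{stab} and a Cauchy--Schwarz inequality (for both $s_h$ and $a_\pw$, on $\mathrm{Ker}(\pid_k)$). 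The load term uses \eqref{eq:weak-fwd} once more: $f(v_h-Jv_h)=a(u,v_h-Jv_h)=a_\pw(u-\pid_k u,v_h-Jv_h)$ by Theorem~\ref{thm:J}.c, whence Theorem~\ref{thm:J}.d yields $\le C_\J|u-\pid_k u|_{1,\pw}|v_h-\pid_k v_h|_{1,\pw}$. Collecting the four contributions, invoking the triangle inequality above, taking the infimum over $v_h\in V_h^k$, and summing over $i$ proves \ref{thm:4.1.a}.

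For case \ref{thm:4.1.b} with $Q=\Pi_k$ the skeleton is identical, only two steps change. First, $\Pi_k u_h\notin V$, so I would write $\mathbb{h}_i(\Pi_k u_h)=\mathbb{h}_i(u_h)-\big((1-\Pi_k)\mathbb{h}_i,(1-\Pi_k)u_h\big)_{L^2(\Omega)}$ (using $\Pi_k\mathbb{h}_i\perp(1-\Pi_k)u_h$) and hence $\mathbb{h}_i(u)-\mathbb{h}_i(\Pi_k u_h)=a(\xi_i,u-u_h)+\big((1-\Pi_k)\mathbb{h}_i,(1-\Pi_k)u_h\big)_{L^2(\Omega)}$; second, the load identity becomes $f(v_h)=a_h(u_h,v_h)+\big((1-\Pi_k)f,(1-\Pi_k)v_h\big)_{L^2(\Omega)}$. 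The two new $L^2$ pairings are estimated elementwise by pairing $h_P$ with the data factor and $h_P^{-1}$ with the discrete factor, using $(1-\Pi_k)w_h=(1-\Pi_k)(w_h-\pid_k w_h)$, the $L^2$-stability \eqref{est:L2} of $\Pi_k$, and Proposition~\ref{PF} in the form $\|w_h-\pid_k w_h\|_{L^2(P)}\le C_\pf h_P|w_h-\pid_k w_h|_{1,P}$ for $w_h\in\{u_h,v_h\}$; this produces the oscillations $\mathrm{osc}_1(\mathbb{h}_i,\T_h)$ and $\mathrm{osc}_1(f,\T_h)$, multiplied by $(|u-u_h|_{1,\Omega}+|u-\pid_k u|_{1,\pw})$ and by $|v_h-\pid_k v_h|_{1,\pw}$ respectively. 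Everything else is verbatim as in \ref{thm:4.1.a}, and the same bookkeeping yields the final assertion: whenever $J$ replaces $\Pi_k$ in \eqref{eq:fweak-VE} or in \eqref{dm}, the corresponding $L^2$ pairing is instead $f(v_h-Jv_h)$ or $\mathbb{h}_i(u)-\mathbb{h}_i(Ju_h)$, which carries $|u-\pid_k u|_{1,\pw}$ or $|\xi_i-v_h|_{1,\Omega}+|v_h-\pid_k v_h|_{1,\pw}$ rather than an oscillation, so the associated $\mathrm{osc}_1$ drops. The principal obstacle is precisely the term $a(v_h,u-u_h)$: extracting its product structure rests on substituting the polynomial consistency \eqref{poly_cons} of $a_h$ (via the $\pid_k$-orthogonality \eqref{pid}) and the defining identity of the modified load $f_h$ for the missing Galerkin orthogonality.
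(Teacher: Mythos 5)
Your proof is correct and rests on the same toolbox as the paper's (the auxiliary problems \eqref{ap}, properties \ref{a}--\ref{d} of the companion in Theorem~\ref{thm:J}, the $\pid_k$-orthogonality, the stabilization bound \eqref{stab}, and Proposition~\ref{PF} for the oscillation terms), but your intermediate decomposition differs genuinely from the paper's. The paper starts from $\mathbb{h}_i(u-Ju_h)=a(u-Ju_h,\xi_i)$ and splits it as in \eqref{4.10} into $a(u-Ju_h,\xi_i-Jv_h)$, the form mismatch $a(Ju_h,Jv_h)-a_h(u_h,v_h)$ (handled by the polynomial consistency \eqref{poly_cons} and Theorem~\ref{thm:J}.c after inserting $\pid_ku$ and $p_k=\pid_kv_h$), and the load mismatch $f_h(v_h)-f(Jv_h)$; for part (b) it then writes $u-\Pi_ku_h=(u-Ju_h)+(Ju_h-\Pi_ku_h)$ and recycles \eqref{4.10}. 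You instead keep $\xi_i-v_h$ in the first slot, isolate the missing Galerkin orthogonality as $a(v_h,u-u_h)$, and expand it exactly as $\bigl(a_h(u_h,v_h)-a_\pw(u_h,v_h)\bigr)+f(v_h-Jv_h)$ with the identity $a_h(u_h,v_h)-a_\pw(u_h,v_h)=s_h((1-\pid_k)u_h,(1-\pid_k)v_h)-a_\pw((1-\pid_k)u_h,(1-\pid_k)v_h)$ following from \eqref{pid}; in (b) you bypass $J$ on the measurement side entirely via $\mathbb{h}_i(u)-\mathbb{h}_i(\Pi_ku_h)=a(\xi_i,u-u_h)+((1-\Pi_k)\mathbb{h}_i,(1-\Pi_k)u_h)_{L^2(\Omega)}$. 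Every step is legitimate: $f(v_h)$, $f(v_h-Jv_h)$ and $a(\xi_i,u-u_h)$ make sense because $V_h^k\subset V$ and $Jv_h\in V$; Proposition~\ref{PF} applies to $v_h-\pid_kv_h$ and $u_h-\pid_ku_h$ thanks to the constant-fixing conditions on $\pid_k$; and each contribution fits the product structure of the asserted bound, so the infimum over $v_h$ and the sum over $i$ go through, including the mixed-$Q$ cases. What your route buys is a cleaner separation of the three error sources (best approximation, stabilization consistency, data smoothing) and, in (b), a shorter path to $\mathrm{osc}_1(\mathbb{h}_i,\T_h)$; what the paper's route buys is a single master identity \eqref{4.10} reused verbatim in (b) and in the subsequent convergence-rate theorem. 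Two small polish points: the auxiliary bound $|u_h-\pid_ku_h|_{1,\pw}\le|u-u_h|_{1,\Omega}+|u-\pid_ku|_{1,\pw}$ follows with constant one directly from \eqref{est:pid}, so the factor $2$ obtained via \eqref{pid_stab} is unnecessary; and no $\sqrt N$ is lost at all, since $\|m-m_h\|\le\sum_{i=1}^N|\mathbb{h}_i(u)-\mathbb{h}_i(Qu_h)|$ is just the inequality $\ell^2\le\ell^1$ --- which matters, because $C_5$ and $C_6$ must be independent of $N$.
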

\begin{proof}[Proof of \ref{thm:4.1.a} ]
  Note that the bilinear form $a(\cdot,\cdot)$ on $V\times V$ is symmetric. This and  the auxiliary problem \eqref{ap} for $i=1,\dots,N$ with the test function $v=u-Ju_h\in V$ imply
\begin{align}
    \mathbb{h}_i(u-Ju_h)=a(u-Ju_h,\xi_i) &= a(u-Ju_h,\xi_i-Jv_h)-(a(Ju_h,Jv_h)-a_h(u_h,v_h))\nonumber\\&\quad-(f_h(v_h)-f(Jv_h))\label{4.10}
\end{align}
with  \eqref{eq:weak-fwd} and \eqref{eq:fweak-VE} in the last equality. 
A triangle inequality, Theorem~\ref{thm:J}.d, and \eqref{est:pid} with $v=u_h$ and $\chi_k=\pid_ku$  show
 \begin{align}
 &|u-Ju_h|_{1,\Omega}\leq |u-u_h|_{1,\Omega}+C_\J |u_h-\pid_k u_h|_{1,\pw}\nonumber\\& \leq |u-u_h|_{1,\Omega}+C_\J |u_h-\pid_ku|_{1,\pw} \leq (1+ C_\J) (|u-u_h|_{1,\Omega}+ |u-\pid_ku|_{1,\pw}) 
 \label{4.13}
 \end{align}
 with a triangle inequality applied again in the last step.
 Similarly, a triangle inequality plus Theorem~\ref{thm:J}.d yield
\begin{align}
|\xi_i-Jv_h|_{1,\Omega} &\le |\xi_i-v_h|_{1,\Omega}+C_\J|v_h-\pid_kv_h|_{1,\pw} \nonumber \\ & \le (1+C_\J)(|\xi_i-v_h|_{1,\Omega}+|v_h-\pid_kv_h|_{1,\pw} ).
\end{align}
For the first term in \eqref{4.10}, a Cauchy-Schwarz inequality and the last two displayed  estimates show 
 \begin{align}
   a(u-Ju_h,\xi_i-Jv_h) &\leq (1+C_\J)^2(|u-u_h|_{1,\Omega}+|u-\pid_ku|_{1,\text{pw}})
   \nonumber\\&\qquad\times(|\xi_i-v_h|_{1,\Omega}+|v_h-\pid_kv_h|_{1,\pw}).\label{4.11}
\end{align}
  For the second term in \eqref{4.10}, the polynomial-consistency from \eqref{poly_cons} for $v_h\in V_h^k$ and $p_k\in\p_k(\T_h)$, $a_\pw(\pid_k u,v_h)=a_\pw(\pid_k u,Jv_h)$ and $a_\pw(u_h-\pid_hu,p_k) = a_\pw(Ju_h-\pid_ku,p_k)$ in the first and  second steps below from Theorem~\ref{thm:J}.c lead   to
\begin{align*}
 a(Ju_h,Jv_h)- a_h(u_h,v_h)&=a_{\text{pw}}(Ju_h-\pid_ku,Jv_h)+a_h(\pid_ku-u_h,v_h)\\ &=  a_{\text{pw}}(Ju_h-\pid_ku,Jv_h-p_k)+ a_h(\pid_ku-u_h,v_h-p_k).
\end{align*}
A Cauchy-Schwarz inequality for $a_\pw$ and the continuity of $a_h$ from \eqref{3.8.a}   imply
$$a(Ju_h,Jv_h)-a_h(u_h,v_h)\leq |Ju_h-\pid_ku|_{1,\pw}|Jv_h-p_k|_{1,\pw}+(1+C_\s)|\pid_ku-u_h|_{1,\pw}|v_h-p_k|_{1,\pw}.$$
Triangle inequalities and \eqref{4.13} show
\begin{align}
  &a(Ju_h,Jv_h)-a_h(u_h,v_h)\leq (3+C_\s+C_\J)(|u-u_h|_{1,\Omega}+|u-\pid_ku|_{1,\pw})\nonumber\\&\hspace{5cm}\times(|v_h-p_k|_{1,\pw}+|v_h-Jv_h|_{1,\Omega})\nonumber\\&\leq (3+C_\s+C_\J)(1+C_\J)(|u-u_h|_{1,\Omega}+|u-\pid_ku|_{1,\pw})|v_h-\pid_kv_h|_{1,\pw}.\label{4.15}
\end{align}
The last step above results from the choice $p_k=\pid_kv_h$ and from Theorem~\ref{thm:J}.d. The last term $f(Jv_h)-f_h(v_h)$ in \eqref{4.10} vanishes for the choice $Q=J$. The definitions of $m$ and $m_h$ for $Q=J$ imply $m-m_h=H\cK f-HJ\cK_hf=((\mathbb{h}_i,u-Ju_h))_{i=1,\dots,N}$. Hence the combination of \eqref{4.11} and \eqref{4.15} in \eqref{4.10} conclude the proof of \ref{thm:4.1.a} with $C_5 := (1+C_\J)^2+(3+C_\s+C_\J)(1+C_\J)$.
\end{proof}
\begin{proof}[Proof of \ref{thm:4.1.b}]
The definitions of $m=H\cK f= Hu$ and $m_h=H_h\cK_hf=HQu_h$ for $Q=\Pi_k$ lead to
\begin{align}
m-m_h = (\mathbb{h}_i(u-\Pi_ku_h))_{i=1,\dots,N} = (\mathbb{h}_i(u-Ju_h)+\mathbb{h}_i(Ju_h-\Pi_ku_h))_{i=1,\dots,N}.\label{4.9}
\end{align} 
For the  first term in the last displayed identity, we utilize  \eqref{4.10} and estimate the first two terms   as  in the proof of \ref{thm:4.1.a}. It remains to bound the last term in \eqref{4.10}  for the choice  $Q=\Pi_k$. The $L^2$ orthogonality of $\Pi_k$ and of $J$ from Theorem~\ref{thm:J}.b show $(J-\Pi_k)u_h\perp \p_k(\T_h)$ in $L^2(\Omega)$. This and again the $L^2$ orthogonality of $\Pi_k$  imply
\begin{align}
    f(Jv_h)-f_h(v_h) =  (f,(J-\Pi_k)v_h)_{L^2(\Omega)} =(h_{\T_h}(f-\Pi_kf),h_{\T_h}^{-1}(J-\pid_k)v_h))_{L^2(\Omega)}.\label{4.16}
\end{align}
The Poincar\'e-Friedrichs inequality  $\|h_{\T_h}^{-1}(J-\pid_k)v_h\|_{L^2(\Omega)}\leq C_{\text{PF}}|(J-\pid_k)v_h|_{1,\text{pw}}$ from Proposition~\ref{PF}, a triangle inequality, and Theorem~\ref{thm:J}.d show
\begin{align}
 f(Jv_h)-f_h(v_h)\leq  C_\pf (1+C_\J)\mathrm{osc}_1(f,\T_h)|v_h-\pid_kv_h|_{1,\pw}.\label{4.17}
\end{align}
For the second term in \eqref{4.9}, analogous arguments in \eqref{4.16}-\eqref{4.17} lead to
\begin{align}
    \mathbb{h}_i(Ju_h-\Pi_ku_h) &\leq C_\pf(1+C_\J)\mathrm{osc}_1(\mathbb{h}_i,\T_h)|u_h-\pid_ku_h|_{1,\pw}\nonumber\\&\leq C_{\text{PF}}(1+C_\J)\mathrm{osc}_1(\mathbb{h}_i,\T_h)(|u-u_h|_{1,\Omega}+|u-\pid_ku|_{1,\text{pw}})\label{4.14}
\end{align}
 with \eqref{est:pid} and a triangle inequality in the last step. The substitution  of  \eqref{4.11}-\eqref{4.15} and \eqref{4.17} in \eqref{4.10} for the first term, and  \eqref{4.14} for the second term in \eqref{4.9}  conclude the proof of \ref{thm:4.1.b} with $C_6:=C_5+2C_\pf(1+C_\J)$.
\end{proof}
\begin{theorem}[convergence rates]
Let $\cK$ and $\cK_h$ be the solution operators   for the continuous problem \eqref{eq:weak-fwd} and discrete problem \eqref{eq:fweak-VE}, respectively.   Let $u\in H^s(\Omega)$ for $s\geq 1$, $f\in H^p(\Omega)\cap H^{s-2}(\Omega)$ for $p\geq 0$,   $k\geq s-1$ for the polynomial degree $k$ of the virtual element space $V_h^k$, and $\xi_i\in H^r(\Omega)$ for $r\geq 1$ and for the solutions $\xi_i$ of auxiliary problems \eqref{ap}.   Then for the choices $Q=\Pi_k$ and $J$, under the assumptions \ref{A1}-\ref{A3} the estimates below hold.
\begin{align}
    |u-u_h|_{1,\Omega}:=|\cK f-\cK_h f|_{1,\Omega}\leq C_7 h^{s-1}\|f\|_{p,\Omega}.\label{3.18}
\end{align}
In addition, if we  assume that $\mathbb{h}_i\in V^*\cap H^{r-2}(\Omega)$ and $k\geq \max\{s,r\}-1$, then 
\begin{align}
    \|m-m_h\|:=\|H\cK f- H_h\cK_hf\| \leq C_8 h^{s+r-2}\|f\|_{p,\Omega}.\label{3.19}
\end{align}
\end{theorem}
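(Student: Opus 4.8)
The plan is to establish \eqref{3.18} and \eqref{3.19} as consequences of the best-approximation results proved earlier, by combining them with the interpolation and polynomial-projection estimates under the regularity assumptions.

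First I would prove the energy estimate \eqref{3.18}. Since $f\in H^p(\Omega)$, Assumption~\ref{A1} gives $u=\cK f\in H^s(\Omega)$ with $\|u\|_{s,\Omega}\le C_{\mathrm{reg1}}\|f\|_{p,\Omega}$. Applying Theorem~\ref{thm:fwd-best} (the appropriate part for $Q=\Pi_k$ or $Q=J$; recall that for $f\in H^p(\Omega)\cap H^{s-2}(\Omega)$ the oscillation term is controlled by $\mathrm{osc}_1(f,\T_h)\lesssim h^{\min\{s-1,k+1\}}|f|_{s-2,\T_h}$ and is thus of the required order when $k\ge s-1$), I bound $|u-u_h|_{1,\Omega}$ by $\inf_{v_h}|u-v_h|_{1,\Omega}+\inf_{p_k}|u-p_k|_{1,\pw}$ plus the oscillation. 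The first infimum is bounded by $|u-I_hu|_{1,\Omega}\le C_\I h^{s-1}|u|_{s,\Omega}$ via Proposition~\ref{prop:est-inter-forward}, and the second by $|u-\pid_k u|_{1,\pw}\le \|u-\Pi_k\nabla u\|_{\dots}$ controlled through Proposition~\ref{prop:est-polyPi} (using $|u-\pid_k u|_{1,\pw}\le C_\apx h^{s-1}|u|_{s,\Omega}$, valid since $k\ge s-1\ge s-1$). Collecting these and using \ref{A1} to replace $|u|_{s,\Omega}$ by $\|f\|_{p,\Omega}$ yields \eqref{3.18} with $C_7$ depending on $C_1$ or $C_2$, $C_\I$, $C_\apx$, and $C_{\mathrm{reg1}}$. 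For completeness I would remark that $\cK_h f$ in \eqref{eq:fweak-VE} uses the $a_h$ bilinear form (the Poisson case), so part (b) of Theorem~\ref{thm:fwd-best} applies directly when $\mathbb{h}_i, f\in V^*$ and part (a) otherwise.

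Next I would prove \eqref{3.19} using Theorem~\ref{thm:meas}. For each $i$, I need to bound the factor $\inf_{v_h\in V_h^k}(|\xi_i-v_h|_{1,\Omega}+|v_h-\pid_k v_h|_{1,\pw})$. Taking $v_h=I_h\xi_i$, Proposition~\ref{prop:est-inter-forward} gives $|\xi_i-I_h\xi_i|_{1,\Omega}\le C_\I h^{r-1}|\xi_i|_{r,\Omega}$, and for the second term a triangle inequality plus \eqref{est:pid} give $|I_h\xi_i-\pid_k I_h\xi_i|_{1,\pw}\le |I_h\xi_i-\pid_k\xi_i|_{1,\pw}\le |I_h\xi_i-\xi_i|_{1,\Omega}+|\xi_i-\pid_k\xi_i|_{1,\pw}$, both $O(h^{r-1})$ by Propositions~\ref{prop:est-inter-forward} and~\ref{prop:est-polyPi}, with constants times $|\xi_i|_{r,\Omega}\le C_{\mathrm{reg2}}$ by Assumption~\ref{A2}. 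The oscillation terms $\mathrm{osc}_1(\mathbb{h}_i,\T_h)$ (when present, i.e. $Q=\Pi_k$) are $\lesssim h^{\min\{r-1,k+1\}}|\mathbb{h}_i|_{r-2,\T_h}$ under $\mathbb{h}_i\in H^{r-2}(\Omega)$ and $k\ge r-1$, hence also $O(h^{r-1})$. So each summand in Theorem~\ref{thm:meas}\ref{thm:4.1.a} or~\ref{thm:4.1.b} is $O(h^{r-1})$; summing over the $N$ fixed functionals contributes a factor $N\,C_{\mathrm{reg2}}\,(C_\I+C_\apx)$. Multiplying by the already-established $O(h^{s-1}\|f\|_{p,\Omega})$ bound on $(|u-u_h|_{1,\Omega}+|u-\pid_k u|_{1,\pw})$ (and noting that $\mathrm{osc}_1(f,\T_h)$, when present, is itself $O(h^{s-1}\|f\|_{p,\Omega})$) gives the product bound $O(h^{s+r-2}\|f\|_{p,\Omega})$, i.e. \eqref{3.19} with $C_8$ depending on $C_5$ or $C_6$, $N$, $C_7$, $C_\I$, $C_\apx$, $C_{\mathrm{reg2}}$.

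The main obstacle is bookkeeping rather than conceptual: one must ensure the polynomial degree hypothesis $k\ge\max\{s,r\}-1$ is used exactly where needed so that \emph{all} approximation errors (interpolation of $u$ and of $\xi_i$, elliptic projection, and both oscillation terms $\mathrm{osc}_1(f,\T_h)$ and $\mathrm{osc}_1(\mathbb{h}_i,\T_h)$) decay at the full rates $h^{s-1}$ resp.\ $h^{r-1}$, and that the "sufficiently small $h$" caveat from the discrete inf-sup condition (needed implicitly for well-posedness of $\cK_h$ in the general setting, though for the pure Poisson case \eqref{eq:fweak-VE} coercivity \eqref{3.8} suffices) is not actually required here since this theorem concerns the Poisson solution operators. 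A secondary point to handle carefully is that \eqref{3.19} multiplies two error quantities, so one must be sure the factor $(|u-u_h|_{1,\Omega}+|u-\pid_k u|_{1,\pw})$ — not just $|u-u_h|_{1,\Omega}$ — is $O(h^{s-1}\|f\|_{p,\Omega})$; this follows from Proposition~\ref{prop:est-polyPi} together with \ref{A1}, exactly as in the proof of \eqref{3.18}.
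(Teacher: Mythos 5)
Your proposal is correct and follows essentially the same route as the paper: \eqref{3.18} is obtained by inserting $v_h=I_h\cK f$ and $p_k=\pid_k\cK f$ into Theorem~\ref{thm:fwd-best} and invoking Propositions~\ref{prop:est-polyPi}--\ref{prop:est-inter-forward} with \ref{A1} (the oscillation term giving $h\,\|f-\Pi_kf\|_{L^2(\Omega)}\lesssim h^{s-1}|f|_{s-2,\Omega}$ exactly as you note), and \eqref{3.19} follows by choosing $v_h=I_h\xi_i$ in Theorem~\ref{thm:meas}, bounding $|I_h\xi_i-\pid_kI_h\xi_i|_{1,\pw}$ via \eqref{est:pid} and a triangle inequality, estimating $\mathrm{osc}_1(\mathbb{h}_i,\T_h)\lesssim h^{r-1}|\mathbb{h}_i|_{r-2,\Omega}$, and multiplying by the $O(h^{s-1}\|f\|_{p,\Omega})$ bound on the first factor using \ref{A2} and \eqref{bd_meas}. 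Your bookkeeping of the degree condition and of the full first factor $|u-u_h|_{1,\Omega}+|u-\pid_ku|_{1,\pw}$ matches what the paper does implicitly, so no gap remains.
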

\begin{proof}[Proof of \eqref{3.18}]
 For $Q=\Pi_k$, choose $v_h=I_h\cK f$ and $p_k=\pid_k\cK f$ in Theorem~\ref{thm:fwd-best}.a. Proposition~\ref{prop:est-polyPi}-\ref{prop:est-inter-forward}   lead  to
\begin{align*}
C_1^{-1}|\cK f- \cK_h f|_{1,\Omega}&\leq  |\cK f- I_h \cK f|_{1,\Omega} + |\cK f- \pid_k \cK f|_{1,\pw}+ h\|f-\Pi_kf\|_{L^2(\Omega)}\\&\leq (C_\I+C_\apx)h^{s-1}(|u|_{s,\Omega}+|f|_{s-2,\Omega}) \leq C_{\text{reg1}}(C_\I+C_\apx)h^{s-1}\|f\|_{p,\Omega}
\end{align*}
with $p\geq s-2$ and \ref{A1} in the last  step. \par For $Q=J$, the estimate in Theorem~\ref{thm:fwd-best}.b is same as  the first two terms in Theorem~\ref{thm:fwd-best}.a, and hence bounded by the previous displayed estimate.  This concludes the proof of \eqref{3.18} with $C_7:=C_1C_{\text{reg1}}(C_\I+C_\apx)$.
\end{proof}
\begin{proof}[Proof of \eqref{3.19}]
For $i=1,\dots,N$, the choice  $v_h= I_h\xi_i\in V_h^k$ in Theorem~\ref{thm:meas}.a and Proposition~\ref{prop:est-inter-forward} imply   \begin{align}|\xi_i-I_h\xi_i|_{1,\Omega}\leq C_\I h^{r-1}|\xi_i|_{r,\Omega}.\label{4.22}\end{align} This, \eqref{3.18}, and \ref{A2}  conclude the proof for $Q=J$ with $C_8:=C_6C_7C_\I C_{\text{reg2}} N$.
\par For $Q=\Pi_k$ in Theorem~\ref{thm:meas}.b, a consequence \eqref{est:pid} of the definition of $\pid_k$ and a triangle inequality show \begin{align}|I_h\xi_i - \pid_kI_h\xi_i|_{1,\pw}\leq |I_h\xi_i-\pid_k \xi_i|_{1,\pw}&\leq|\xi_i-I_h\xi_i|_{1,\Omega}+|\xi_i-\pid_k\xi_i|_{1,\pw}\nonumber\\&\leq (C_\I+C_\apx) h^{r-1}|\xi_i|_{r,\Omega}\label{4.23}\end{align}  with Proposition~\ref{prop:est-polyPi}-\ref{prop:est-inter-forward}  in the end. For the oscillation of $\mathbb{h}_i$, Proposition~\ref{prop:est-polyPi} implies 
\begin{align}
\text{osc}_1(\mathbb{h}_i,\T_h)\leq  h\|\mathbb{h}_i-\Pi_k\mathbb{h}_i\|_{L^2(\Omega)}\leq C_\apx h^{r-1}|\mathbb{h}_i|_{r-2,\Omega}.\label{4.24}
\end{align}
 The estimate \eqref{3.18} and  the combination \eqref{4.22}-\eqref{4.24} in Theorem~\ref{thm:meas}.b  provide
\begin{align*}
   C_6^{-1} \|m-m_h\|&\leq {2C_7(C_\I+C_\apx)}h^{s+r-2}\|f\|_{p,\Omega}\sum_{i=1}^{N}(|\xi_i|_{r,\Omega}+|\mathbb{h}_i|_{r-2,\Omega})\\&\leq 2C_7(C_\I+C_\apx)( C_{\text{reg2}}+C_\h)Nh^{s+r-2} \|f\|_{p,\Omega}
\end{align*}
with \ref{A2} and \eqref{bd_meas} in the last step. This  concludes the proof of \eqref{3.19} with a re-labelled constant $C_8:=2C_6C_7(C_\I+C_\apx)( C_{\text{reg2}}+C_\h)N$.
\end{proof}
\subsection{Virtual element method for the inverse problem}
Recall $h_P$ denote the diameter of a polygonal domain $P$. Let $\T_\tau$ be an admissible polygonal decomposition satisfying \ref{M1}-\ref{M2} for the discretisation parameter $\tau:=\max_{P\in\T_\tau}h_P$  in the inverse problem.   
Given $ F=H^p(\Omega)$ for $p\geq 1$, construct the discrete space $F_{\
\tau,p}^{\ell}:=\{g_\tau\in F :g_\tau|_P\in F_{\tau,p}^{\ell}(P)\quad\text{for all}\;P\in\T_\tau\}\subset F$ on polygonal meshes $\T_\tau$ with the local conforming virtual element space $F_{\tau,p}^{\ell}(P)$ of order $p$ and of degree $\ell$. Let $\Pi_\ell^*:F_{
\tau,p}^{\ell}(P)\to \p_\ell(P)$ be  bounded in $H^p$ seminorm and  computable projection operator for all $P\in\T_\tau$. The definition of the space $F_{
\tau,p}^{\ell}(P)$ for $p\geq 1$ and $\ell\geq 2p-1$ from \cite{MR4070337} reads
\begin{align}F_{\tau,p}^{\ell}(P):= \begin{rcases}\begin{dcases}&g_\tau\in H^p(P) : \Delta^p g_\tau\in\p_{\ell}(P),\quad \forall E\in\e(P)\quad g_\tau|_E\in\p_\ell(E)\\&\quad\text{and}\quad\gamma_j(g_\tau)|_E\in\p_{\ell-j}(E)\quad\text{for}\;j=1,\dots,p-1,\\&\quad (g_\tau-\Pi_\ell^*g_\tau,\chi_\ell)_{L^2(P)}=0\quad\text{for all}\;\chi_\ell\in\mathcal{M}_{\ell}(P)\setminus\mathcal{M}_{\ell-p-1}(P)\end{dcases}\end{rcases}\label{ds_inverse}\end{align}
for the $j^{\text{th}}$ trace $\gamma_j$ on the boundary $\partial P$ of the polygonal subdomain $P\in\T_\tau$. For $p=0$, one can simply choose the discrete space as piecewise polynomials, that is, \[F_{\tau,0}^\ell:=\{g_\tau\in L^2(\Omega):g_\tau|_P\in\p_\ell(P)\quad\text{for all}\; P\in\T_\tau\}.\]
The functions $g_\tau$ in \eqref{ds_inverse} can be characterized through following degrees of freedom:
\begin{itemize}
    \item $\tau_z^{|a|}D^{a}g_\tau(z)$ for $|a|\leq p-1$ and for any vertex $z\in\mathcal{V}(P)$ with associated characteristic length $\tau_z^{|a|}$,
    \item $\dashint_Eg_\tau p_{\ell-2p}\,ds$ for  $p_\ell\in\mathcal{M}_{\ell-2p}(E)$ and for any edge $E\in\e(P)$,
    \item $\tau_E^{-1+j}\int_E(g_\tau)_\textbf{n}p_{\ell-2p}\,ds$ for $p_{\ell-2p}\in\mathcal{M}_{\ell-2p-j}(E)$, $j=1,\dots,p-1$ and  for any edge $E\in\e(P)$,
    \item $\dashint_Pg_\tau p_{\ell-2p}\,dx$ for $p_{\ell-2p}\in\mathcal{M}_{\ell-2p}(P)$.
\end{itemize}
\begin{remark}[comparison of virtual elements spaces in forward and inverse problems]
  Note that the virtual element space $V_h^k$ is a subset of  $V=H^1_0(\Omega)$ in the forward problem, whereas the discrete space $F_{\tau,p}^\ell\subset H^p(\Omega)$ changes with the order $p$ in the inverse problem. For $p=1$ and $h=\tau$, we can choose $\Pi_\ell^*=\Pi_\ell^\nabla$ and then the definition \eqref{ds_inverse} coincides with the local enhanced virtual element space \eqref{local_sp} in the forward problem. For $p=2$, \eqref{ds_inverse} coincides with the conforming virtual element space for the biharmonic problem  \cite{MR3002804}.
\end{remark}
Let $\cB_\pw$ denote the piecewise version of $\cB$. We make an additional assumption \ref{A4}
 on the discrete bilinear form $\cB_\tau$. 
\begin{enumerate}[label={(\bfseries A4)}]
\item\label{A4} The  discrete bilinear form $\cB_\tau:F_{\tau,p}^{\ell}\times F_{\tau,p}^{\ell}\to \mathbb{R}$  satisfies   the  two  properties  below:
\begin{itemize}
\item Polynomial consistency: 
\begin{align}\cB_\tau(g_\tau,\chi_\ell) = \cB_\pw(g_\tau,\chi_\ell)\quad\text{ for all}\; g_\tau\in F_{\tau,p}^{\ell}\quad\text{and}\quad \chi_\ell\in\p_\ell(\T_\tau).\label{b:cons}\end{align}
\item  Stability   with respect the norm $ \cB(\cdot,\cdot)^{1/2}$ on $F_{\tau,p}^{\ell}$ : There exists a positive constant $C_\stab$ (depending exclusively on $\rho$ from \ref{M2}) with
\begin{align}C_\stab^{-1}\cB(g_\tau,g_\tau)\le \cB_\tau(g_\tau,g_\tau)\le C_\stab \cB(g_\tau,g_\tau)\quad\text{ for all}\; g_\tau\in F_{\tau,p}^{\ell}.\label{b:stab}\end{align}
\end{itemize}
\end{enumerate}
\textbf{Discrete inverse problem}. The discrete version of \eqref{eq:cts_IP} seeks $f_h^\tau\in F_{\tau,p}^{\ell}$ such that
\begin{align} \label{eq:weak-inv-h}
\A^\tau_h(f_h^\tau,g_\tau)=\cL^\tau_h(g_\tau) \qquad \text{for all}\; g_\tau \in F_{\tau,p}^\ell
\end{align}
with $\A_h^\tau(f,g) := (H_hK_hf)^T(H_hK_hg)+\alpha \cB_\tau(f,g)$ and $\cL^\tau_h(g):=m^T(H_hK_hg)$ for  $f,g\in F_{\tau,p}^\ell$. 
\begin{theorem}[well-posedness of discrete inverse problem]\label{wp:dis}
 For all $f_\tau,g_\tau\in F_{\tau,p}^\ell$,  there exist positive constants $C_9(\alpha)$ and $C_{10}(\alpha)$ such that
\begin{align*}
 \A_h^\tau(f_\tau,g_\tau) \leq C_9(\alpha)\|f_\tau\|_{p,\Omega}\|g_\tau\|_{p,\Omega} \quad\text{and}\quad\A_h^\tau(g_\tau,g_\tau) \geq C_{10}(\alpha)\|g_\tau\|_{p,\Omega}^2.
\end{align*}
Moreover, there exists a unique discrete solution $f_h^\tau\in F_{\tau,p}^\ell$ to \eqref{eq:weak-inv-h}.
\end{theorem}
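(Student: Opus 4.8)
The plan is to check the hypotheses of the Lax--Milgram lemma for the symmetric bilinear form $\A_h^\tau$ and the linear functional $\cL_h^\tau$ on the Hilbert space $F_{\tau,p}^\ell$ endowed with the $H^p(\Omega)$-norm, exactly as was done for the continuous regularized problem \eqref{eq:cts_IP}. The two displayed inequalities in the statement are precisely the continuity and the coercivity of $\A_h^\tau$ on $F_{\tau,p}^\ell\times F_{\tau,p}^\ell$; once these are established, boundedness of $\cL_h^\tau$ together with Lax--Milgram immediately yields the unique discrete solution $f_h^\tau$. The one step that is not purely mechanical is a uniform operator bound for the composition $H_h\cK_h$, which I would isolate first.

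So the first step is to prove $\|H_h\cK_h f\|\le C\|f\|_{p,\Omega}$ for every $f\in F_{\tau,p}^\ell$, with $C$ independent of $\tau$ and $\alpha$. Since $F_{\tau,p}^\ell\subset H^p(\Omega)\hookrightarrow L^2(\Omega)\hookrightarrow V^\ast$, the discrete source functional is controlled by $f(Qv_h)\le C\|f\|_{p,\Omega}\,|v_h|_{1,\Omega}$ for both admissible choices of $Q$: for $Q=\Pi_k$ one writes $f(\Pi_kv_h)=(f,\Pi_kv_h)_{L^2(\Omega)}$ and uses the $L^2$-stability \eqref{est:L2} of $\Pi_k$ and the Friedrichs inequality; for $Q=J$ one uses $|Jv_h|_{1,\Omega}\le(1+2C_\J)|v_h|_{1,\Omega}$, a consequence of a triangle inequality, Theorem~\ref{thm:J}.d and the $\pid_k$-stability \eqref{pid_stab}. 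Testing $a_h(\cK_hf,v_h)=f(Qv_h)$ with $v_h=\cK_hf$ and invoking the coercivity \eqref{3.8} of $a_h$ gives $|\cK_hf|_{1,\Omega}\le C_\s C\|f\|_{p,\Omega}$, and then the boundedness \eqref{bd_meas} of $H$, applied to $\cK_hf\in V$, produces $\|H_h\cK_hf\|=\|H\cK_hf\|\le C_\h\,\|\cK_hf\|_{1,\Omega}\le C\|f\|_{p,\Omega}$.

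Granting this bound, continuity follows from a Cauchy--Schwarz inequality in $\mathbb{R}^N$, namely $(H_h\cK_hf_\tau)^T(H_h\cK_hg_\tau)\le\|H_h\cK_hf_\tau\|\,\|H_h\cK_hg_\tau\|\le C^2\|f_\tau\|_{p,\Omega}\|g_\tau\|_{p,\Omega}$, together with a Cauchy--Schwarz inequality for the symmetric positive definite form $\cB_\tau$ (implicit in \ref{A4}), the upper stability bound in \eqref{b:stab}, and the continuity of $\cB$ in \eqref{b_bounds}, which give $\cB_\tau(f_\tau,g_\tau)\le C_\stab C^\ast\|f_\tau\|_{p,\Omega}\|g_\tau\|_{p,\Omega}$; combining with weight $\alpha$ yields $C_9(\alpha)=C^2+\alpha C_\stab C^\ast$. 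Coercivity is obtained by discarding the nonnegative term $\|H_h\cK_hg_\tau\|^2\ge0$ and using the lower stability bound in \eqref{b:stab} and the coercivity of $\cB$ in \eqref{b_bounds}: $\A_h^\tau(g_\tau,g_\tau)\ge\alpha\cB_\tau(g_\tau,g_\tau)\ge\alpha C_\stab^{-1}\cB(g_\tau,g_\tau)\ge\alpha C_\stab^{-1}C_\ast\|g_\tau\|_{p,\Omega}^2$, so $C_{10}(\alpha)=\alpha C_\stab^{-1}C_\ast$. Finally $\cL_h^\tau(g_\tau)=m^T(H_h\cK_hg_\tau)\le\|m\|\,\|H_h\cK_hg_\tau\|\le C\|m\|\,\|g_\tau\|_{p,\Omega}$, so $\cL_h^\tau$ is bounded and Lax--Milgram gives a unique $f_h^\tau\in F_{\tau,p}^\ell$. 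I expect the only genuinely delicate part to be the chain of stability estimates behind the uniform bound on $H_h\cK_h$ --- in particular absorbing the rough case $Q=J$ through Theorem~\ref{thm:J}.d --- and noting that, as expected, the coercivity constant $C_{10}(\alpha)$ degenerates as $\alpha\to0$, reflecting the ill-posedness of the inverse problem.
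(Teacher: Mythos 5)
Your overall strategy coincides with the paper's: verify continuity and coercivity of $\A_h^\tau$ on $F_{\tau,p}^\ell$, check boundedness of $\cL_h^\tau$, and invoke Lax--Milgram; the coercivity argument (drop $\|H_h\cK_hg_\tau\|^2\ge 0$, use \eqref{b:stab} and \eqref{b_bounds}) and the treatment of $\cL_h^\tau$ are identical, and your handling of the $\cB_\tau$ term in the continuity bound (Cauchy--Schwarz for $\cB_\tau$ plus the upper bound in \eqref{b:stab}) is if anything slightly more careful than the paper's \eqref{3.10}. The one place where you genuinely diverge is the key uniform bound on $H_h\cK_h$: the paper bounds $\|\cK_hg_\tau\|_{1,\Omega}\le\|\cK_hg_\tau-\cK g_\tau\|_{1,\Omega}+\|\cK g_\tau\|_{1,\Omega}$ and then invokes the a priori error estimate \eqref{3.18} together with the regularity assumption \ref{A1}, whereas you obtain $|\cK_hf|_{1,\Omega}\lesssim\|f\|_{p,\Omega}$ directly from the discrete coercivity \eqref{3.8} and the boundedness of the discrete right-hand side $f(Q\cdot)$. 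Your route is more elementary and self-contained (it needs neither \eqref{3.18} nor \ref{A1} for this step), while the paper's route reuses machinery it has already built and keeps the constant $C_{11}$ expressed through $C_7$ and $C_{\mathrm{reg1}}$.

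One slip to repair: you write $\|H_h\cK_hf\|=\|H\cK_hf\|$, but by \eqref{dm} the discrete measurement is $H_h\cK_hf=HQ\cK_hf$, not $H\cK_hf$. The fix is exactly the stability of $Q$ you already proved in your first step: for $Q=J$ one has $J\cK_hf\in V$ and $|J\cK_hf|_{1,\Omega}\le(1+C_\J)|\cK_hf|_{1,\Omega}$ by Theorem~\ref{thm:J}.d and \eqref{est:pid}, so \eqref{bd_meas} applies directly; for $Q=\Pi_k$ the function $\Pi_k\cK_hf$ is a discontinuous piecewise polynomial and does not lie in $V$, so \eqref{bd_meas} cannot be cited verbatim --- one uses instead that in this case $\mathbb{h}_i\in L^2(\Omega)$ and the $L^2$-stability \eqref{est:L2} (the paper's own estimate $\|Q\cK_hg_\tau\|_{1,\Omega}\le(2+C_\apx+C_\J)\|\cK_hg_\tau\|_{1,\Omega}$ leading to \eqref{3.11} glosses over the same point). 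With this correction your argument closes, with constants $C_9(\alpha)$, $C_{10}(\alpha)$ of the same form as in the paper.
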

\begin{proof}
 For $f_\tau,g_\tau\in F_{\tau,p}^{\ell}$, the definition of $H_h$  and the continuity of $\cB$ from \eqref{b_bounds} show 
\begin{align}
  \A_h^\tau(f_\tau,g_\tau) &\leq \|HQ\cK_hf_\tau\|  \|HQK_hg_\tau\| + \alpha C^*\|f_\tau\|_{p,\Omega}\|g_\tau\|_{p,\Omega}.\label{3.10}
\end{align}
A triangle inequality, Proposition~\ref{prop:est-polyPi} for $Q=\Pi_k$, Theorem~\ref{thm:J}.d, {and the inequality \eqref{est:pid} (with $\chi_k=0$)} for $Q=J$ imply $\|Q\cK_hg_\tau\|_{1,\Omega}\leq \|(Q-1)\cK_hg_\tau\|_{1,\Omega}+\|\cK_hg_\tau\|_{1,\Omega}\leq (2+C_\apx+C_\J)\|\cK_hg_\tau\|_{1,\Omega}$. This, the boundedness of $H$ from \eqref{bd_meas}, and a triangle inequality     result in \begin{align*}\|HQ\cK_hg_\tau\|&\leq C_\h(2+C_\apx+C_\J)(\|\cK_hg_\tau-\cK g_\tau\|_{1,\Omega}+\|\cK g_\tau\|_{1,\Omega})\nonumber\\&\leq C_\h(2+C_\apx+C_\J)(C_7+C_{\text{reg1}})\|g_\tau\|_{p,\Omega}\end{align*}
with  \eqref{3.18} and \ref{A1} in the last step.  Then $C_{11}:= C_\h(C_7+C_{\text{reg1}})(2+C_\apx+C_\J)$ implies that 
\begin{align}
\|H_h\cK_hg_\tau\|\leq C_{11}\|g_\tau\|_{p,\Omega}\quad\text{for any}\;g_\tau\in F_{\tau,p}^{\ell}.\label{3.11}
\end{align}
Hence the   estimates \eqref{3.10}-\eqref{3.11} prove that the bilinear form $\A_h^\tau$ is bounded with $C_9(\alpha):=C_{11}^2+\alpha C^*$.
\par For $g_\tau\in F_{\tau,p}^\ell$, the stability of $\cB_\tau$ in \eqref{b:stab} and $\|H_hK_hg_\tau\|^2\geq 0$  lead  to 
\begin{align}
   \A_h^\tau(g_\tau,g_\tau) \geq \|H_h\cK_hg_\tau\|^2 + \alpha C_\stab^{-1}\cB(g_\tau,g_\tau) \geq \alpha C_\stab^{-1}C_* \|g_\tau\|_{p,\Omega}^2
\end{align}
with the coercivity of $\cB$  from \eqref{b_bounds}  in the last inequality. This proves that $\A_h^\tau$ is coercive with $C_{10}(\alpha):=\alpha C_*C_\stab^{-1}$.
\par The bound \eqref{3.11} shows $|\cL_h^\tau(g_\tau)|\leq C_{11}\|m\|\|g_\tau\|_{p,\Omega}$ for any $g_\tau\in F_{\tau,p}^\ell$ and proves the continuity of a linear functional $\cL_h^\tau$. Hence the Lax-Milgram lemma concludes the proof.
\end{proof}
\begin{remark}
Note that the  solution $f_r$ to the regularized problem \eqref{eq:cts_IP} and the  solution $f_h^\tau$ to the discrete inverse solution \eqref{eq:weak-inv-h}  depend on $\alpha$. Suppresion of $\alpha$  in $f_r$ and $f_h^\tau$  is just for the notational convenience, but we track dependency of constants on  $\alpha$ in the error estimates.
\end{remark}
\begin{proposition}[Interpolation estimates for inverse problem \cite{MR4070337}] \label{prop:est-inter}
	For every $f\in H^{q}(\Omega)$ and $q\geq p$, there exists an interpolant $I_\tau f\in F_{\tau,p}^{\ell}$ of $f$ with 
	\begin{align*}
	\| f - I_\tau f\|_{p,\Omega} \leq C_\I^*\tau^{q-p}\|f\|_{q,\Omega} \qquad \text{for }\;0\leq p\leq \ell+1.
	\end{align*}
\end{proposition}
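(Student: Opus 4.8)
The plan is to build $I_\tau f$ element by element from the degrees of freedom listed after \eqref{ds_inverse} and then reduce the bound, on each $P\in\T_\tau$, to the Bramble--Hilbert lemma. Because the triple $(P,F_{\tau,p}^{\ell}(P),\{\mathrm{dof}_i\})$ is a finite element in the sense of Ciarlet, the local operator $I_\tau^{P}\colon H^{q}(P)\to F_{\tau,p}^{\ell}(P)$ determined by $\mathrm{dof}_i(I_\tau^{P}f)=\mathrm{dof}_i(f)$ is well defined provided $q$ is large enough that the pointwise values $D^{a}f(z)$, $|a|\le p-1$, in \eqref{ds_inverse} make sense, which by the Sobolev embedding $H^{q}(P)\hookrightarrow C^{p-1}(\overline P)$ holds once $q>p$ (the borderline low-regularity cases being handled in \cite{MR4070337} by a quasi-interpolation instead). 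Unisolvence gives $I_\tau^{P}\chi_\ell=\chi_\ell$ for every $\chi_\ell\in\p_\ell(P)$, and since the shared vertex and edge data agree across elements, $I_\tau f:=(I_\tau^{P}f)_{P\in\T_\tau}$ lies in $F_{\tau,p}^{\ell}\subset F$. Here the standing hypothesis $p\le\ell+1$ is what makes the trace conditions in \eqref{ds_inverse} meaningful and ensures $\p_\ell(P)\subset F_{\tau,p}^{\ell}(P)$.

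First I would establish two scaling ingredients, with constants depending only on $\rho$ from \ref{M2}. (i) A norm equivalence on the virtual space: writing $\widehat{\mathrm{dof}}_i$ for the degrees of freedom rescaled by the characteristic lengths $\tau_z^{|a|}$, $\tau_E^{-1+j}$, $\dots$ appearing in \eqref{ds_inverse} so as to be dimensionless, one has $\|v_\tau\|_{p,P}^{2}\lesssim\sum_i|\widehat{\mathrm{dof}}_i(v_\tau)|^{2}$ for all $v_\tau\in F_{\tau,p}^{\ell}(P)$; this is proved by splitting $v_\tau=\Pi_\ell^{*}v_\tau+(1-\Pi_\ell^{*})v_\tau$, invoking the $H^{p}$-stability of $\Pi_\ell^{*}$, and running a scaling argument on the shape-regular sub-triangulation of $P$ guaranteed by \ref{M2}. (ii) A dof bound: a trace inequality together with the Sobolev embedding, with all the scalings of \eqref{ds_inverse} tracked explicitly, yields $|\widehat{\mathrm{dof}}_i(g)|\lesssim\sum_{j=0}^{p}h_P^{\,j-p}\,|g|_{j,P}$ for every sufficiently smooth $g\in H^{q}(P)$.

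Then I would combine these with Bramble--Hilbert. Let $\chi_\ell\in\p_\ell(P)$ be an averaged Taylor polynomial of $f$ over a ball inside $P$ of radius $\simeq\rho h_P$; \cite{brenner2008mathematical} gives $h_P^{\,j-q}|f-\chi_\ell|_{j,P}\lesssim|f|_{q,P}$ for $0\le j\le q$ (in the relevant range $p\le q\le\ell+1$). Since $I_\tau^{P}\chi_\ell=\chi_\ell$,
\begin{align*}
\|f-I_\tau^{P}f\|_{p,P}&\le\|f-\chi_\ell\|_{p,P}+\big\|I_\tau^{P}(f-\chi_\ell)\big\|_{p,P}\\
&\lesssim\|f-\chi_\ell\|_{p,P}+\Big(\sum_i\big|\widehat{\mathrm{dof}}_i(f-\chi_\ell)\big|^{2}\Big)^{1/2},
\end{align*}
and substituting ingredient (ii) and then Bramble--Hilbert into the right-hand side gives $\|f-I_\tau^{P}f\|_{p,P}\lesssim h_P^{\,q-p}|f|_{q,P}$ with a constant depending only on $\rho$. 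Squaring, summing over $P\in\T_\tau$, and using $h_P\le\tau$ together with $\sum_{P}|f|_{q,P}^{2}=|f|_{q,\Omega}^{2}\le\|f\|_{q,\Omega}^{2}$ produces the asserted estimate with $C_\I^{*}$ depending only on $\rho$.

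The main obstacle is ingredient (i) --- the uniform control of the $H^{p}$ norm of a virtual function by its rescaled degrees of freedom, independently of the shape and the number of edges of $P$. Unlike in classical FEM, $F_{\tau,p}^{\ell}(P)$ is not affine-equivalent to a fixed reference element, so one cannot pull everything back to a reference configuration; the estimate must instead be run directly on $P$ via its shape-regular sub-triangulation, the $H^{p}$-stability of $\Pi_\ell^{*}$, and careful bookkeeping of the characteristic lengths $\tau_z^{|a|}$ and $\tau_E$ in \eqref{ds_inverse}. All of this is carried out in detail in \cite{MR4070337}, from which the proposition is quoted.
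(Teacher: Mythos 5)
The paper does not prove this proposition: it is quoted from \cite{MR4070337}, so there is no internal argument to compare yours against. Your outline is the standard Ciarlet-style route (dof interpolant, polynomial reproduction via unisolvence, dof boundedness plus a dof-to-$H^p$ norm equivalence, Bramble--Hilbert, scaling and summation over $\T_\tau$), which is indeed how such estimates are obtained for virtual spaces, and your identification of the norm equivalence (i) as the crux --- not obtainable by affine pull-back and requiring the shape-regular sub-triangulation from \ref{M2} --- is accurate.

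Two points would need repair before this could stand as a proof rather than a reduction to \cite{MR4070337}. First, ingredient (ii) as stated is not correct in 2D: the vertex dofs $\tau_z^{|a|}D^{a}g(z)$, $|a|\le p-1$, are not controlled by $\sum_{j=0}^{p}h_P^{\,j-p}|g|_{j,P}$, since $H^{1}(P)\not\hookrightarrow L^{\infty}(P)$; you need scaled seminorms of order strictly larger than $p$ (ultimately up to $q$, or a Sobolev bound with an extra $\epsilon$ of regularity). This is harmless in the end, because you apply the bound to $g=f-\chi_\ell$ and Bramble--Hilbert controls all seminorms up to order $q$, but the statement of (ii) must be adjusted, and the borderline case $q=p$ genuinely requires the quasi-interpolation detour you only mention in passing. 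Second, your sketch of ingredient (i) is circular as written: splitting $v_\tau=\Pi_\ell^{*}v_\tau+(1-\Pi_\ell^{*})v_\tau$ and invoking the $H^{p}$-stability of $\Pi_\ell^{*}$ does not explain how $(1-\Pi_\ell^{*})v_\tau$ is bounded by the dofs; that control, uniform in the shape and number of edges of $P$, is precisely the nontrivial stability/inverse estimate on the virtual space and is the substance of the cited result. Since the proposition is itself quoted from \cite{MR4070337}, deferring (i) there is defensible, but your argument should then be presented as a sketch conditional on that reference rather than as an independent proof.
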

Recall the true source field $f_{\text{true}}$, the  solution $f_r$ to the regularized problem, and the solution $f_h^\tau$ to the discrete inverse problem. Our aim is to estimate $f_{\text{true}}-f_h^\tau=(f_{\text{true}}-f_r)+(f_r-f_h^\tau)$. The error $f_{\text{true}}-f_r$ is estimated in \cite{MR3245124,nair2021conforming}. Hence we focus on the discretisation error $f_r-f_h^\tau$ in Theorem~\ref{thm:dis-error}.
\begin{remark}[noisy measurement]
   Given the true source field $f_{\text{true}}$, the measurement $m=H\cK f_{\text{true}}$ can be noisy. This noisy measurement, denoted by $m^\delta$ for $\delta>0$, can be obtained as $m^\delta=m+n$  with the additive noise $n$ and $\|n\|\leq \delta$. Let $f_r^\delta$ solve  \eqref{eq:cts_IP} for the noisy measurement $m^\delta$. Then $\|f_r-f_r^\delta\|_{p,\Omega}\leq \frac{1}{2\sqrt{C_*}}\frac {\delta}{\sqrt{\alpha}}$ (see \cite[Theorem~3.4]{nair2021conforming} for a proof). Hence  an optimal choice of $\alpha$ depending on $\delta$ for a fixed $N$ is $\alpha\approx \delta^{2/3}$, and consequently $\|f_r-f_r^\delta\|_{p,\Omega}\lesssim \delta^{2/3}$. 
\end{remark}
\begin{theorem}[discretisation error]\label{thm:dis-error}
Let $f_r$ be the solution to the regularized problem \eqref{eq:cts_IP} and $f_h^\tau$ be the solution to the discrete problem \eqref{eq:weak-inv-h}. Let $u\in H^s(\Omega)$ for $s\geq 1$, $f\in H^p(\Omega)\cap H^{s-2}(\Omega)$ for $p\geq 0$, $f_r\in H^q(\Omega)$ for $q\geq p$,  $\ell\geq q-1$ for the polynomial degree $\ell$ of the virtual element space $F_{\tau,p}^\ell$, and $\xi_i\in H^r(\Omega)$ for $r\geq 1$ and for the solutions $\xi_i$ of auxiliary problems \eqref{ap}. Then under the assumptions \ref{A1}-\ref{A3}, there exists a positive constant $C_{12}(\alpha)$ such that \begin{align*}
    \|f_r-f_h^\tau\|_{p,\Omega}\leq C_{12}(\alpha) (h^{r+s-2}+\tau^{q-p})\|m\|.
\end{align*}
\end{theorem}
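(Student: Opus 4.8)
The plan is a Strang-type argument. Although $F_{\tau,p}^{\ell}\subset F$, the discrete problem \eqref{eq:weak-inv-h} is nonconforming in two independent ways: the exact solver $H\cK$ is replaced by the virtual element solver $H_h\cK_h$, and the regularizer $\cB$ is replaced by $\cB_\tau$. Set $e_\tau:=I_\tau f_r-f_h^\tau\in F_{\tau,p}^{\ell}$, with $I_\tau f_r$ the interpolant from Proposition~\ref{prop:est-inter}. Coercivity of $\A_h^\tau$ (Theorem~\ref{wp:dis}), the discrete equation \eqref{eq:weak-inv-h}, the \emph{exact} identity $\A_r(f_r,e_\tau)=\cL_r(e_\tau)$ from \eqref{eq:cts_IP} (valid since $e_\tau\in F$), and bilinearity give
\begin{align*}
C_{10}(\alpha)\,\|e_\tau\|_{p,\Omega}^2
&\le \A_h^\tau(e_\tau,e_\tau)=\A_h^\tau(I_\tau f_r,e_\tau)-\cL_h^\tau(e_\tau)\\
&= \underbrace{\A_r(I_\tau f_r-f_r,e_\tau)}_{=:S_1}
+\underbrace{\bigl(\A_h^\tau(I_\tau f_r,e_\tau)-\A_r(I_\tau f_r,e_\tau)\bigr)}_{=:S_2}
+\underbrace{\bigl(\cL_r(e_\tau)-\cL_h^\tau(e_\tau)\bigr)}_{=:S_3}.
\end{align*}
It then suffices to bound $S_1,S_2,S_3$ by $C(\alpha)\,(h^{r+s-2}+\tau^{q-p})\,\|m\|\,\|e_\tau\|_{p,\Omega}$ and divide by $\|e_\tau\|_{p,\Omega}$.

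For $S_1$ (the approximation error), continuity of $\A_r$ — which follows from \eqref{b_bounds} and the boundedness of $H\cK$ via \eqref{bd_meas} and \ref{A1} — together with Proposition~\ref{prop:est-inter} and \ref{A3} gives $S_1\lesssim (1+\alpha)\,C_\I^{*}\tau^{q-p}\|f_r\|_{q,\Omega}\|e_\tau\|_{p,\Omega}\lesssim (1+\alpha)\,\tau^{q-p}\|m\|\,\|e_\tau\|_{p,\Omega}$. For $S_3$ (the data consistency error), since $S_3=m^{T}\bigl(H\cK e_\tau-H_h\cK_h e_\tau\bigr)$, a Cauchy--Schwarz inequality in $\mathbb{R}^N$ and the convergence rate \eqref{3.19} applied with $f=e_\tau\in H^p(\Omega)$ — note $\cK e_\tau\in H^s$ by \ref{A1}, whence $e_\tau=-\Delta\cK e_\tau\in H^{s-2}$ automatically — yield $S_3\le \|m\|\,C_8\,h^{s+r-2}\|e_\tau\|_{p,\Omega}$.

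For $S_2$ (carrying the two nonconformities), write $S_2=S_2^{\mathrm a}+\alpha S_2^{\mathrm b}$ with $S_2^{\mathrm a}:=(H_h\cK_h I_\tau f_r)^{T}(H_h\cK_h e_\tau)-(H\cK I_\tau f_r)^{T}(H\cK e_\tau)$ and $S_2^{\mathrm b}:=\cB_\tau(I_\tau f_r,e_\tau)-\cB(I_\tau f_r,e_\tau)$. Decomposing $S_2^{\mathrm a}=(H_h\cK_h I_\tau f_r-H\cK I_\tau f_r)^{T}(H_h\cK_h e_\tau)+(H\cK I_\tau f_r)^{T}(H_h\cK_h e_\tau-H\cK e_\tau)$ and bounding the factors by \eqref{3.19}, by $\|H_h\cK_h e_\tau\|\le C_{11}\|e_\tau\|_{p,\Omega}$ from \eqref{3.11}, and by $\|H\cK I_\tau f_r\|\le C_\h C_{\mathrm{reg1}}\|I_\tau f_r\|_{p,\Omega}\lesssim \|m\|$ (via \eqref{bd_meas}, \ref{A1}, Proposition~\ref{prop:est-inter}, \ref{A3}) gives $S_2^{\mathrm a}\lesssim h^{s+r-2}\|m\|\,\|e_\tau\|_{p,\Omega}$. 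For $S_2^{\mathrm b}$, insert a piecewise polynomial $\chi_\ell\in\p_\ell(\T_\tau)$ best approximating $f_r$ (possible since $\ell\ge q-1$): the polynomial consistency \eqref{b:cons}, the stability \eqref{b:stab}, the continuity \eqref{b_bounds} of $\cB$, and the localizability of $\cB$ (so $\cB_\pw=\cB$ on $F\times F$) reduce it to $S_2^{\mathrm b}\lesssim \|I_\tau f_r-\chi_\ell\|_{p,\pw}\|e_\tau\|_{p,\Omega}$; a triangle inequality, Proposition~\ref{prop:est-inter}, a standard polynomial approximation estimate (the analogue of Proposition~\ref{prop:est-polyPi} on $\T_\tau$), and \ref{A3} then give $\|I_\tau f_r-\chi_\ell\|_{p,\pw}\lesssim \tau^{q-p}\|f_r\|_{q,\Omega}\lesssim \tau^{q-p}\|m\|$.

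Collecting the three bounds, dividing by $\|e_\tau\|_{p,\Omega}$, and combining with the triangle inequality $\|f_r-f_h^\tau\|_{p,\Omega}\le \|f_r-I_\tau f_r\|_{p,\Omega}+\|e_\tau\|_{p,\Omega}$ together with Proposition~\ref{prop:est-inter} and \ref{A3} once more yields the claim, with a constant $C_{12}(\alpha)$ collecting $C_{10}(\alpha)^{-1}$, the continuity constant of $\A_r$, and $C_8,C_{11},C_\I^{*},C_{\mathrm{reg1}},C_{\mathrm{reg3}},C_\stab,C^{*}$. The step I expect to be the main obstacle is $S_2^{\mathrm b}$: matching the one-sided polynomial consistency \eqref{b:cons} with a comparison of $\cB_\tau$ and $\cB$ at the non-polynomial argument $I_\tau f_r$ forces the insertion of $\chi_\ell$ and a careful use of symmetry and localizability of $\cB$. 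A secondary difficulty is bookkeeping: keeping the two mesh parameters decoupled ($h$ controlling the forward and measurement errors through \eqref{3.18}--\eqref{3.19}, $\tau$ the source discretisation through Proposition~\ref{prop:est-inter}) and tracking the $\alpha$-dependence honestly, which also requires $h$ and $\tau$ small enough for the well-posedness of Theorem~\ref{wp:dis} and of the forward virtual element scheme.
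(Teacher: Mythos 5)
Your argument is correct and reaches the paper's estimate with the same rate and the same $\alpha$-dependence, but the decomposition is genuinely different from the paper's. The paper also starts from coercivity with $e_\tau=I_\tau f_r-f_h^\tau$, but it inserts the piecewise polynomial projection $\Pi_\ell f_r$ directly into the global splitting, writing the consistency term as $\A_h^\tau(I_\tau f_r-\Pi_\ell f_r,e_\tau)+(\A_h^\tau(\Pi_\ell f_r,e_\tau)-\A_r(\Pi_\ell f_r,e_\tau))+\A_r(\Pi_\ell f_r-f_r,e_\tau)+(\cL_r(e_\tau)-\cL_h^\tau(e_\tau))$; because the middle comparison is taken at a polynomial argument, the one-sided polynomial consistency \eqref{b:cons} makes the $\cB_\tau$-versus-$\cB$ discrepancy vanish at once, and what remains is purely measurement/solver error, handled exactly as you handle $S_2^{\mathrm a}$ and $S_3$ via \eqref{3.11}, \eqref{3.12} and \eqref{3.19}. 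You instead keep the textbook Strang shape, comparing $\A_h^\tau$ and $\A_r$ at the non-polynomial argument $I_\tau f_r$, which forces the separate regularizer-mismatch term $S_2^{\mathrm b}$; your treatment of it (insert $\chi_\ell\in\p_\ell(\T_\tau)$, use \eqref{b:cons}, symmetry and locality of $\cB$, and continuity of $\cB_\tau$) is sound, but note it needs the symmetry of $\cB_\tau$/$\cB$ (since \eqref{b:cons} places the polynomial in the second slot) and a continuity bound for $\cB_\tau$ on $F_{\tau,p}^{\ell}+\p_\ell(\T_\tau)$ with the piecewise norm, neither of which is literally stated in \ref{A4}; to be fair, the paper commits the same mild extension when it applies the continuity of Theorem~\ref{wp:dis} to $I_\tau f_r-\Pi_\ell f_r$ and reads $\A_r$ at the discontinuous argument $\Pi_\ell f_r$ via $\cB_\pw$. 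In short: the paper's placement of $\Pi_\ell f_r$ buys an immediate elimination of the regularizer nonconformity at the cost of two extra $\tau^{q-p}$ approximation terms, while your splitting isolates the two nonconformities more transparently but pays with the extra bookkeeping in $S_2^{\mathrm b}$; the final constants and the rate $h^{r+s-2}+\tau^{q-p}$ (with $C_{10}(\alpha)^{-1}\propto\alpha^{-1}$) agree.
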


\begin{proof}[Proof of Theorem~\ref{thm:dis-error}]
Recall the interpolation $I_\tau f_r\in F_{\tau,p}^{\ell}$ from Proposition~\ref{prop:est-inter} and let $e_\tau:=I_\tau f_r-f_h^\tau\in F_{\tau,p}^{\ell}$. The coercivity of $\A_h^\tau$ from Theorem~\ref{wp:dis} and the discrete problem \eqref{eq:weak-inv-h} lead to
 \begin{align}
    C_{10}(\alpha) \|e_\tau\|_{p,\Omega}^2 &\leq \A_h^\tau(I_\tau f_r,e_\tau)- \cL_h^{\tau}(e_\tau)\nonumber\\&=\A_h^\tau(I_\tau f_r - \Pi_\ell f_r ,e_\tau)+(\A_h^\tau(\Pi_\ell f_r,e_\tau)-\A_r( \Pi_\ell f_r,e_\tau))\nonumber\\&\quad+\A_r(\Pi_\ell f_r-f_r,e_\tau) +(\cL_r(e_\tau)-\cL_h^\tau(e_\tau))\label{3.21}
 \end{align}
 with the regularized problem \eqref{eq:cts_IP} in the last step. The continuity of $\A_h^\tau$ from Theorem~\ref{wp:dis} for the first step and a triangle inequality for the second step show
 \begin{align}
    \A_h^\tau(I_\tau f_r- \Pi_\ell f_r,e_\tau)&\leq C_9(\alpha) \| I_\tau f_r- \Pi_\ell f_r\|_{p,\pw}\|e_\tau\|_{p,\Omega}\nonumber\\&\leq C_9(\alpha) (\|I_\tau f_r-f_r\|_{p,\Omega}+\|f_r- \Pi_\ell f_r\|_{p,\pw})\|e_\tau\|_{p,\Omega}\nonumber\\&\leq C_9(\alpha)(C_\I^*+C_\apx)  \tau^{q-p}\|f_r\|_{q,\Omega} \|e_\tau\|_{p,\Omega}
 \end{align}
 with   Proposition~\ref{prop:est-inter} and \ref{prop:est-polyPi} in the last step. The polynomial consistency in \eqref{b:cons} implies $\cB_\tau(\Pi_\ell f_r,e_\tau)=\cB_\pw(\Pi_\ell f_r,e_\tau)$. This and an elementary algebra lead to
 \begin{align}
  &\A_h^\tau(\Pi_\ell f_r,e_\tau)-\A_r( \Pi_\ell f_r,e_\tau) = (H_h\cK_h(\Pi_\ell f_r-f_r)-H\cK(\Pi_\ell f_r-f_r))^TH_h\cK_he_\tau \nonumber\\&\quad+ (H_h\cK_hf_r-H\cK f_r)^T H_h\cK_h e_\tau +(HK\Pi_\ell f_r)^T (H_h\cK_he_\tau-HKe_\tau).\label{4.28}
 \end{align}
 The bound for $\|H\|$ from \eqref{bd_meas} and the assumption \ref{A1} show  \begin{align}\|H\cK f\|\leq C_\h\|\cK f\|_{1,\Omega}\leq C_{13} \|f\|_{p,\Omega}\quad\text{for any $f\in F$ with $C_{13}:=C_\h C_{\text{reg1}}$}.\label{3.12}\end{align}
  A triangle inequality and Proposition~\ref{prop:est-polyPi} show $\|\Pi_\ell f_r\|_{p,\Omega} \leq \|\Pi_\ell f_r-f_r\|_{p,\Omega}+\|f_r\|_{p,\Omega}\leq (1+C_\apx)\|f_r\|_{p,\Omega}$. This, and  the estimates \eqref{3.11} and \eqref{3.12} in \eqref{4.28} prove 
 \begin{align*}
 &\A_h^\tau(\Pi_\ell f_r,e_\tau)-\A_r( \Pi_\ell f_r,e_\tau)\leq C_{11}(1+C_{11}+C_{13})\|e_\tau\|_{p,\Omega}(\|f_r-\Pi_\ell f_r\|_{p,\pw} \\&\hspace{3cm}+ \|H\cK f_r-H_h\cK_h f_r\|) + C_{13}(1+C_\apx)\|f_r\|_{p,\Omega} \|H\cK e_\tau - H_h \cK_h e_\tau\|\nonumber\\&\leq C_{14}(\tau^{q-p}\|f_r\|_{q,\Omega}+h^{s+r-2}\|f_r\|_{p,\Omega})\|e_\tau\|_{p,\Omega}
 \end{align*}
with Proposition~\ref{prop:est-polyPi},  \eqref{3.19} for $f_r$ and $e_\tau$, and $C_{14}:=C_{11}(1+C_{11}+C_{13})(C_\apx+C_8)+C_8C_{13}(1+C_\apx)$ in the last step. The definition of $\A_r$, \eqref{3.12}, and \ref{A0} result in 
\begin{align}
\A_r(\Pi_\ell f_r-f_r,e_\tau) &\leq (C_{13}^2+ \alpha C^*)\|\Pi_\ell f_r-f_r\|_{p,\Omega}\|e_\tau\|_{p,\Omega}\nonumber\\&\leq C_\apx(C_{13}^2+\alpha C^*)\tau^{q-p}\|f_r\|_{q,\Omega}\|e_\tau\|_{p,\Omega}
\end{align}
with Proposition~\ref{prop:est-polyPi} in the last step.
The definitions of $\cL_r$ and $\cL_h^\tau$, and \eqref{3.19} for $e_\tau$ provide
 \begin{align}
  |\cL_r(e_\tau)-\cL_h^\tau(e_\tau)|\leq \|m\|\|H\cK e_\tau -H_h\cK_h e_\tau \|\leq C_8 h^{s+r-2}\|m\|\|e_\tau\|_{p,\Omega}.\label{3.24}
 \end{align}
 The combination \eqref{3.21}-\eqref{3.24} and  \ref{A3} result in $\|e_\tau\|_{p,\Omega}\leq C_{10}^{-1}(\alpha)C_{15}(\alpha) (h^{s+r-2}+\tau^{q-p})\|m\|$ with $C_{15}(\alpha):=C_{\text{reg3}}(C_9(\alpha)(C_\I^*+C_\apx)+C_{14}+C_\apx(C_{13}^2+\alpha C^*)+C_8$. Note that $C_{10}^{-1}(\alpha)\propto \alpha^{-1}$, which comes from the coercivity of $\A_h^\tau$. This and Proposition~\ref{prop:est-inter} in the triangle inequality
 \[\|f_r-f_h^\tau\|_{p,\Omega}\leq \|f_r-I_\tau f_r\|_{p,\Omega}+\|I_\tau f_r-f_h^\tau\|_{p,\Omega}\leq (C^*_\I+C_{10}^{-1}(\alpha)C_{15}(\alpha))(h^{s+r-2}+\tau^{q-p})\|m\|\]
 conclude the proof with $C_{12}(\alpha):=C^*_\I+C_{10}^{-1}(\alpha)C_{15}(\alpha)$.
 \end{proof}

 \begin{remark}[comparison with \cite{MR3245124,nair2021conforming}]
 An intermediate problem is introduced in the conforming FEM for the Poisson inverse source problem \cite[Theorem~3.8]{MR3245124} and the Galerkin orthogonality provides a simpler proof therein. The analysis for the inverse biharmonic problem in  \cite{nair2021conforming} also considers intermediate problem and  is based on a companion operator. In this VEM analysis,  we avoid both  intermediate problem and companion for the inverse problem. 
 \end{remark}

\section{Numerical results} \label{sec:numer}
This section demonstrates numerical examples for general second-order linear elliptic problems  and  Poisson inverse  source problems in two subsections.
\par Since an explicit structure of 
 the discrete solution $u_h$ is not feasible, we compare $u$ with the projection $\Pi_ku_h$ of the discrete solution $u_h$. Also if the exact solution is  not known,  we compare the discrete solution 
 $\Pi_k u_h^{\text{final}}$ at the finest level  to the solution $\Pi_ku_h^j$ at each level $j$. Note that $\Pi_k$ also depends on each refinement level $j$. In all the experiments below, we assume $k=1$, and the relative $H^1$ and $L^2$ errors are computed using  
 \[\text{err}_d(u):=\frac{|u-\Pi_1u_h|_{d,\pw}}{|u|_{d,\Omega}}\;\text{and}\;\text{err}_d(u_h):=\frac{|\Pi_1u_h^{\text{final}}-\Pi_1u_h|_{d,\pw}}{|\Pi_1u_h^{\text{final}}|_{d,\pw}}\quad\text{for}\;d=0,1.\] 
 
\subsection{General second-order problems with modified scheme}
The conforming VEM for general second-order problems is discussed in \cite{MR3460621,MR3671497} with the various benchmark examples for $f\in L^2(\Omega)$ and $Q=\Pi_k$.  Refer to \cite{beirao2014hitchhiker} for the details on the implementation of VEM applied to the Poisson problem.
\subsubsection{Academic example}\label{sec:acad_ex}
The exact solution $u(x,y) = \sin(\pi x)\sin(\pi y)$ solves the general second-order indefinite (non-coercive) problem \eqref{cp} with the coefficients \[\ba = \begin{pmatrix} y^2+1& -xy \\ -xy&x^2+1\end{pmatrix},\quad \bb = (x,y), \quad\text{and}\quad\gamma= x^2+y^3.\]
We perform numerical tests on a sequence of $25,100,400,1600$, and $6400$ nonconvex polygonal subdomains and observe that the errors compare for the two choices of $Q=\Pi_1$ and $Q=J$ for this example.
\begin{figure}[H]
	\centering
	\begin{subfigure}{.5\textwidth}
		\centering
		\includegraphics[width=1\linewidth]{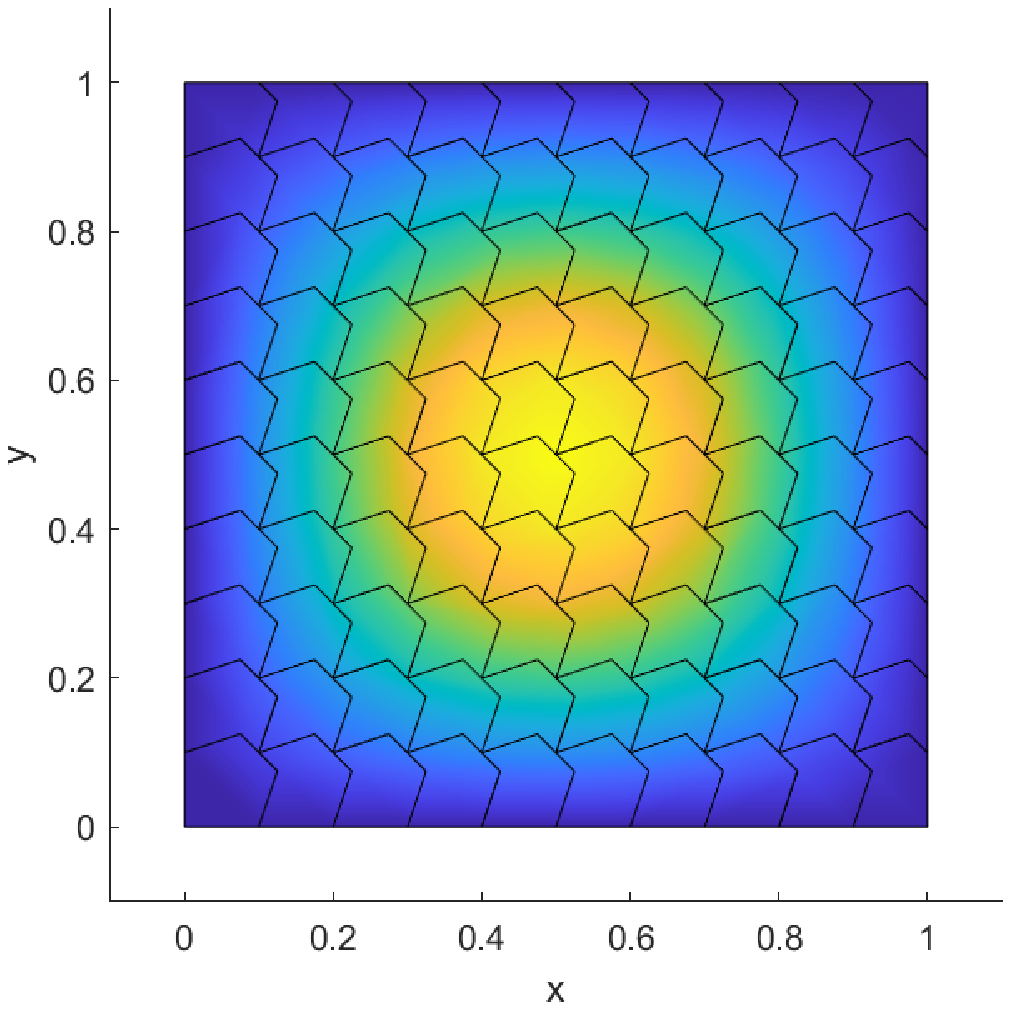}
	\end{subfigure}%
	\begin{subfigure}{.5\textwidth}
		\centering
		\includegraphics[width=1\linewidth]{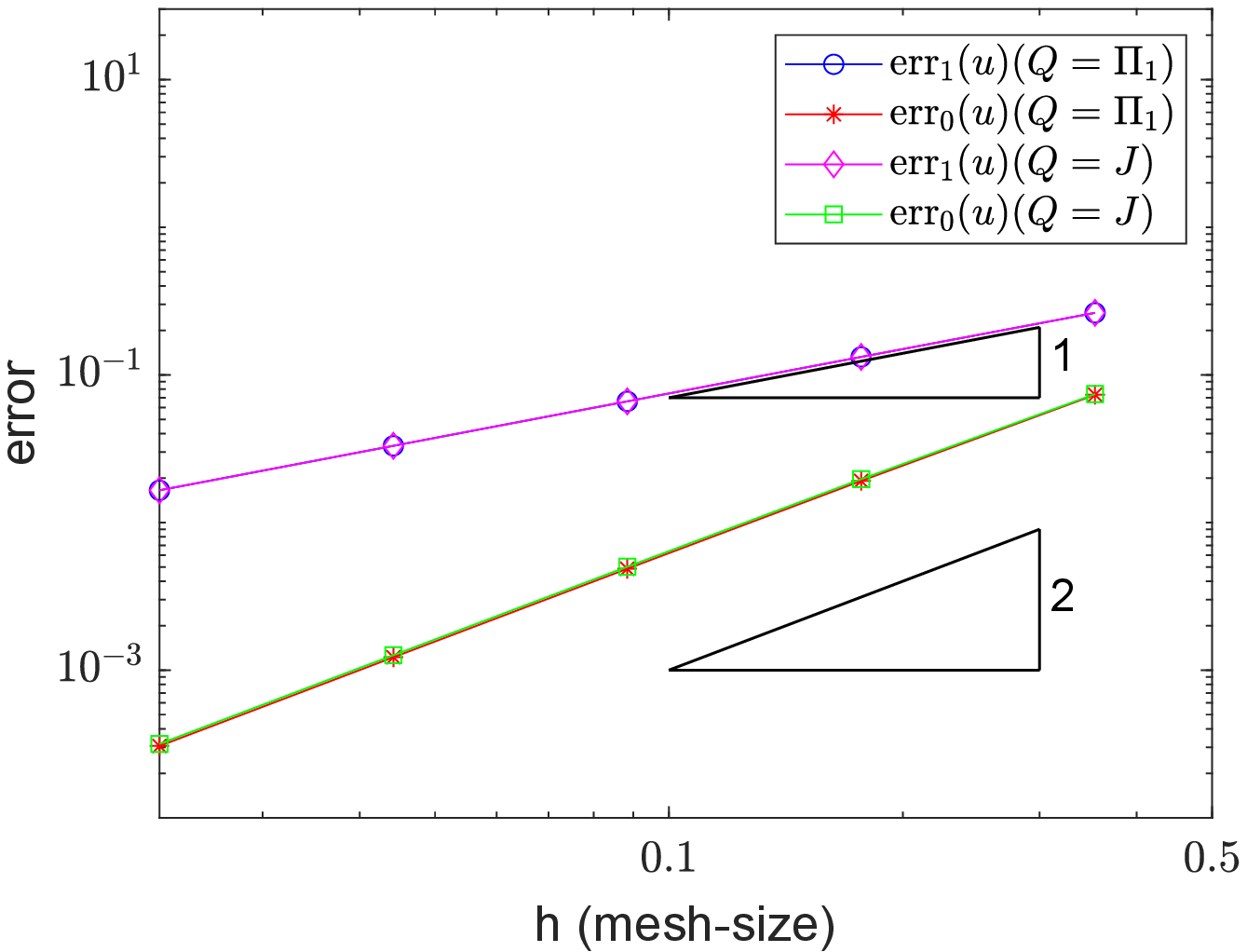}
	\end{subfigure}
		\caption{Polygonal mesh of 100 nonconvex elements (left) and  convergence plot for two choices of $Q$ (right).}
	\label{fig:PJ}
\end{figure}
\begin{table}[H]
	\begin{center}
		\begin{tabular}{|c|c|c|c|c|}
			\hline
			&\multicolumn{2}{c|}{$Q=\Pi_1$}&\multicolumn{2}{c|}{$Q=J$}\\
			\hline
			$h$&$\text{err}_1(u)$&$\text{err}_0(u)$&$\text{err}_1(u)$&$\text{err}_0(u)$\\
			\hline
			\hline	
   0.35355&0.26312&0.073286&0.26328&0.074126\\
0.17678&0.13226&0.019136&0.13231&0.019657\\
0.08838&0.06615&0.004860&0.06616&0.005015\\
0.04419&0.03306&0.001222&0.03307&0.001262\\
0.02209&0.01653&0.000306&0.01653&0.000316\\
\hline
   \end{tabular}
   \end{center}
   \caption{Energy and $L^2$ errors for two choices of $Q$.}
   \label{table:ex1_PJ}
   \end{table}

\subsubsection{Point load}
This subsection considers the general second-order problem \eqref{cp} with a point source $f=\delta_c$ supported at $c$ and the discrete problem \eqref{dp} with $Q=J$.  Theorem~\ref{thm:J}.a simplifies the discrete right-hand side to
\begin{align}f_h(v_h)=f(Jv_h)=\delta_c(Jv_h)=Jv_h(c).\label{J_pt}\end{align}
Since $u\in H^1(\Omega)$ and the dual solution $\Phi\in H^{\widetilde{s}}(\Omega)$ for $\widetilde{s}\geq 2$ from \eqref{adjoint} on a square domain $\Omega=(0,1)^2$, we expect from Theorem~\ref{thm:err} and also numerically observe (see Table~\ref{table:ex1}-\ref{table:ex2})  convergence rate of the error in the $L^2$ norm as  $\min\{\widetilde{s}-1,1\}=1$.

\medskip
\begin{example}\label{ex:5.1}
{\rm  We consider the Poisson  problem ($\ba=1, \bb=0$ and $\gamma=0$ in \eqref{cp}) with $f=\delta_c$ for $c=(0.1,0.1)$ and   a sequence of Voronoi meshes with $5, 25, 100, 400, 1600$, and  $6400$ number of polygonal subdomains. Since $u\in H^1(\Omega)$, it is enough to take the polynomial degree $k=1$. Note that for $k=1$ and $v_h\in V_h^k$, the definition of $\pid_1$ from \eqref{pid} implies $\Pi_0 \nabla v_h =\nabla \pid_1 v_h$ and hence the discrete formulations \eqref{eq:fweak-VE} and \eqref{dp} coincide in this particular case.  Even though we do not obtain an order of   convergence in the $H^1$ norm, Table~\ref{table:ex1} indicates that the $H^1$ error  decreases after a few refinements.  }
\end{example}
 \begin{example}\label{ex:5.2} 
 {\rm This  is a general second-order indefinite (non-coercive) problem with the coefficients $\ba,\bb,\gamma$ from Subsection~\ref{sec:acad_ex}
 and with the point load $f=\delta_c$ for $c=(0.431260,0.438584)$. Figure~\ref{fig:pt_force} displays an initial square distorted mesh and we refine the mesh at each level by connecting the mid-points of the edges to the centroid. This leads to a sequence of quadrilateral meshes and the point $c$ belongs to the set of vertices $\mathcal{V}$ in every refinement. In this case, the companion $J$  need not be computed explicitly and the identity \eqref{J_pt} reduces to $f_h(v_h)= Jv_h(c)=v_h(c)$ from Theorem~\ref{thm:J}.a.   In Examples~\ref{ex:5.1}-\ref{ex:5.2}, we treat the discrete solution $u_h^6$ at the $6^{\text{th}}$ refinement level as the exact solution.}
 \end{example}
 \begin{table}[H]
	\begin{center}
		\begin{tabular}{|c|c|c|c|c|}
			\hline
			$h$&$\text{err}_1(u_h)$&conv. rate&$\text{err}_0(u_h)$&conv. rate\\
			\hline
			\hline	
0.33657&0.95732&0.29940&0.65875&1.4923\\
0.17849&0.79174&0.10103&0.25565&1.0226\\
0.09377&0.74190&0.09829&0.13238&0.6322\\
0.04798&0.69461&0.35248&0.08666&1.4682\\
0.02434&0.54681&-&0.03199&-\\
0.01221&0&-&0&-\\
\hline
   \end{tabular}
   \end{center}
   \caption{Energy and $L^2$ errors with respective convergence rates in Example~\ref{ex:5.1}.}
   \label{table:ex1}
   \end{table}
\begin{figure}[H]
	\centering
	\begin{subfigure}{.5\textwidth}
		\centering
		\includegraphics[width=1\linewidth]{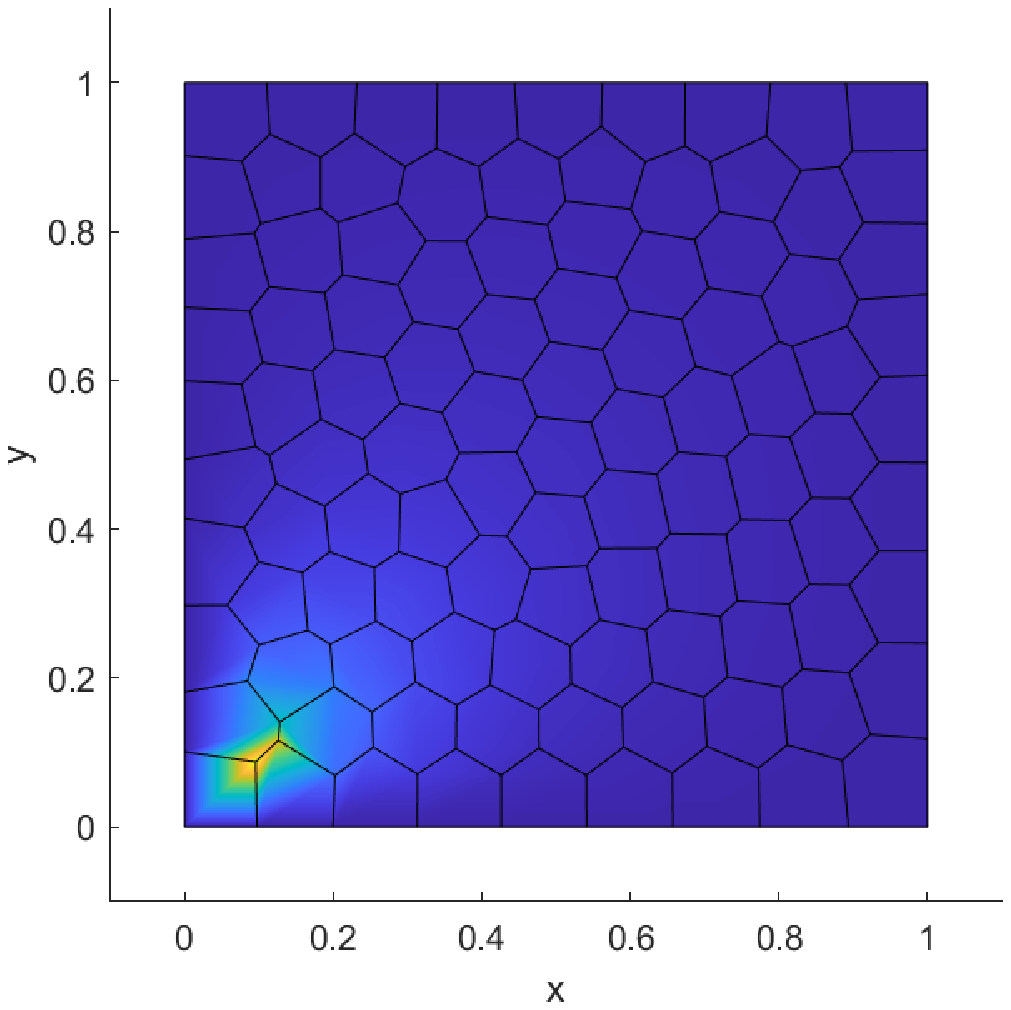}
	\end{subfigure}%
	\begin{subfigure}{.5\textwidth}
		\centering
		\includegraphics[width=1\linewidth]{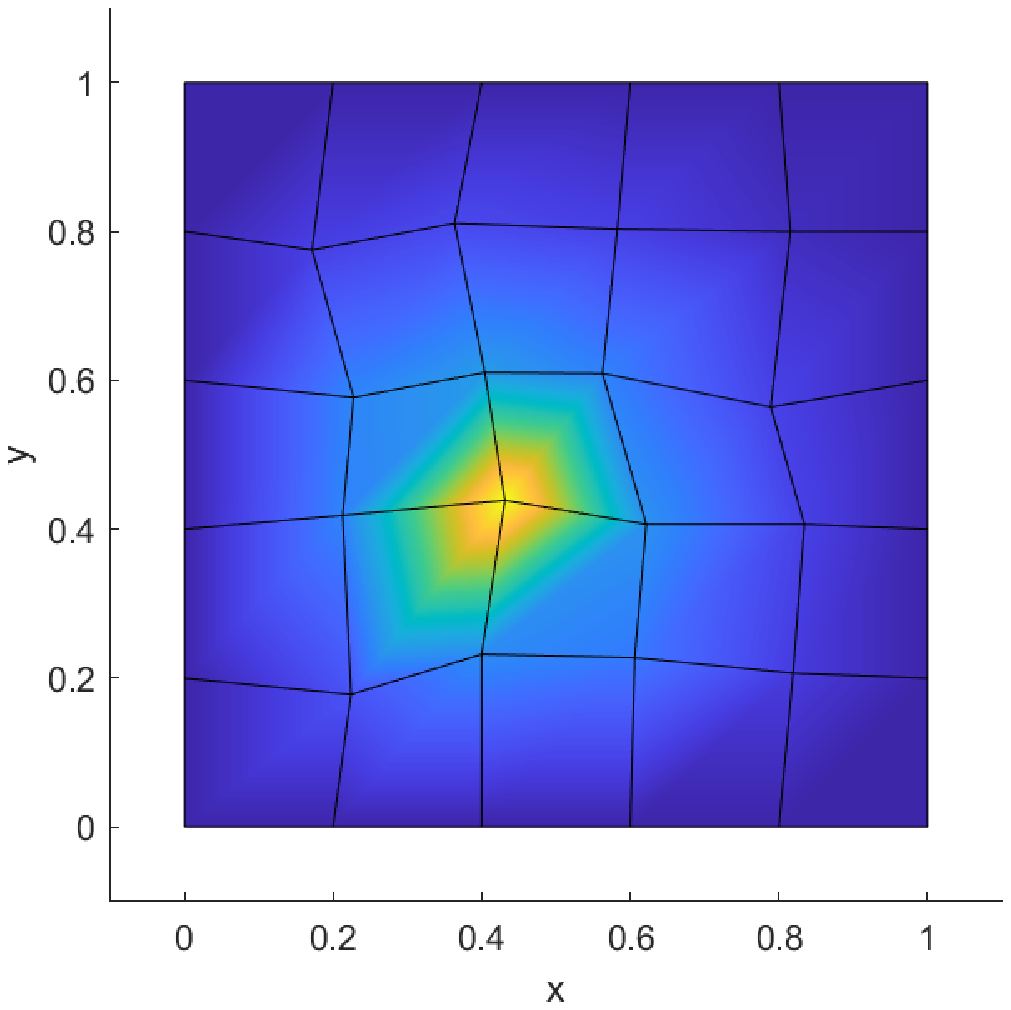}
	\end{subfigure}
		\caption{Voronoi mesh of 100 elements in Example~\ref{ex:5.1} (left) and  square distorted mesh of 25 elements in Example~\ref{ex:5.2} (right).}
	\label{fig:pt_force}
\end{figure}
\begin{table}[H]
	\begin{center}
		\begin{tabular}{|c|c|c|c|c|}
			\hline
			$h$&$\text{err}_1(u_h)$&conv. rate&$\text{err}_0(u_h)$&conv. rate\\
			\hline
			\hline	
0.28284&0.66028&0.11542&0.142200&0.93567\\
0.14142&0.60951&-0.04152&0.074344&0.99145\\
0.07097&0.62721&0.62531&0.037531&1.06480\\
0.03548&0.40661&0.22496&0.017942&1.08240\\
0.01772&0.34784&-&0.008465&-\\
0.00886&0&-&0&-\\
\hline
   \end{tabular}
   \end{center}
   \caption{Energy and $L^2$ errors with respective convergence rates in Example~\ref{ex:5.2}.}
   \label{table:ex2}
   \end{table}

\subsection{Inverse Problem}
The  algorithm below highlights the two main parts and subsequent major steps in each part in the VEM implementation of the discrete inverse problem. For the sake of simplicity, assume $p=1$ in the discrete virtual element space $F_{\tau,p}^\ell$ for the inverse problem (so that $V_h^k=F_{\tau,p}^{\ell}$). { The choice of $\mathcal{B}(\cdot,\cdot)$ in this paper is $a( \cdot,  \cdot)$, and the discrete bilinear form $\mathcal{B}_{\tau}(\cdot,\cdot)$ in assumption \ref{A4} is chosen as $a_h(\cdot, \cdot)$ from \eqref{2.18}. 

\medskip
\hrule
\vspace{0.2cm}
{\large \textbf{Algorithm}}
\vspace{0.2cm}
\hrule
\vspace{0.2cm}
{\small
\noindent \textbf{Part I - Forward problem}
\begin{enumerate}
\item Compute the projection matrix of $\pid_k$.
\item Solve the forward problem \eqref{eq:fweak-VE} for each basis function $\varphi_j$ of $V_h^k$ as a source function ($f=\varphi_j$) and denote the solution vector by $U_j=[U_j^r]_{r=1,\dots,\text{Ndof}}$.
\item Write $\cK_h\varphi_j = \sum_{r=1}^{\text{Ndof}}U_j^r\varphi_r$.
\end{enumerate}
\textbf{Part II - Inverse problem}
\begin{enumerate}
\item Compute the matrix $H_h\cK_h\varphi_j=\sum_{r=1}^{\text{Ndof}}U_j^rH(Q\varphi_r)$ to solve the discrete inverse problem \eqref{eq:weak-inv-h}.
\item  Compute $(H_h\cK_h\varphi_j)^T(H_h\cK_h\varphi_r)=((SU)^T(SU))_{jr}$ for $S_{ir}=\mathbb{h}_i(Q\varphi_r)$ with the matrix $S=[S_{ir}]_{i=1,\dots,N;r=1,\dots,\text{Ndof}}.$
\item Compute $\alpha$ for noise $n$ in measurement by solving $$\alpha = \underset{\alpha>0}{\arg\!\min}  \Big(\frac{ \alpha\,\|f_{\text{true}}\|}{\max(\text{eig}(L))} + \frac{\| n\|}{2 \sqrt{\alpha}} \Big) \text{ at finest mesh with $L_{ij}:=\cB_{\tau}(\eta_i^{\tau},\eta_j^{\tau})_{i,j=1,\dots,N},
$ }$$
 where $\eta_i^{\tau} \in F_{\tau,p}^{\ell}$ solves the discrete problem $\cB_{\tau}(\eta_i^{\tau},g_{\tau})=(\xi_i^h, g_{\tau})_{L^2(\Omega)}$ and $\xi_i^h \in V_h^k$ solves the discrete forward problem \eqref{eq:fweak-VE} with load function $\mathbb{h}_i$ for each $i=1, \dots,N$.

\item Depending on the choice of $\cB_\tau$, compute the projection matrices involved in $\cB_\tau$ and evaluate the matrix $B= [B_{jr}]_{j,r=1,\dots,\text{Ndof}}$ for 
 the term $\cB_\tau(\varphi_j,\varphi_r)$.
\item Compute the discrete right-hand side $m^TSU$.
\item Solve the linear system $A\mathcal{F}=R$ for $A= (SU)^T(SU)+\alpha B$ and $R=m^TSU$.
\end{enumerate}
\hrule
}
\subsubsection{Measurement functionals in $L^2(\Omega)$}

\medskip \noindent 
Recall that $m$ is the given measurement of the true forward solution $u$ and $m_h$ is the computed measurement of the discrete solution $u_h$; $f$ is the true inverse solution, $f_h$ is the solution to the discrete problem \eqref{eq:weak-inv-h} with mesh size $h$. The approximation errors $\text{err}(m)$ of measurement $m$, and errors $\text{err}_0(f)$ and $\text{err}_1(f)$ of the solution of the inverse problem  in $L^2$ and $H^1$ norm, and $\text{err}_d(f_h)$   are defined  by
\begin{equation} \label{eq:err_compute}
\begin{split}
\text{err}(m)=\frac{\|m - m_h\|}{\|m\|}, 
\quad  \text{err}_d(f)=\frac{\|f - \Pi_1 f_h \|_{d,\pw}}{\|f\|_{d,\pw}}, \\
\text{and} \quad
\text{err}_d(f_h)=\frac{\|\Pi_1f_h^{\text{final}} - \Pi_1f_{h}\|_{d,\pw}}{\| \Pi_1 f_h^{\text{final}}\|_{d,\pw}}, \qquad
\end{split}
\end{equation}
where $f_h^{\text{final}}$ is the solution of inverse problem \eqref{eq:weak-inv-h} at the finest mesh. 

Note that $f_r$ is not computable (even when $f$ is known) and is assumed as $f_h^{\text{final}}$ in order to verify the theoretical results. Moreover, the error $err_d(f)$ converges to relative error of regularised solution ($\text{err}_d(f_r) = \|f-f_r\|_{d,\Omega}/\|f\|_{d,\Omega}$) as the mesh-size decreases.

\begin{figure}[H]
	\begin{subfigure}{.5\textwidth}
		\centering
		\includegraphics[width=0.8\linewidth]{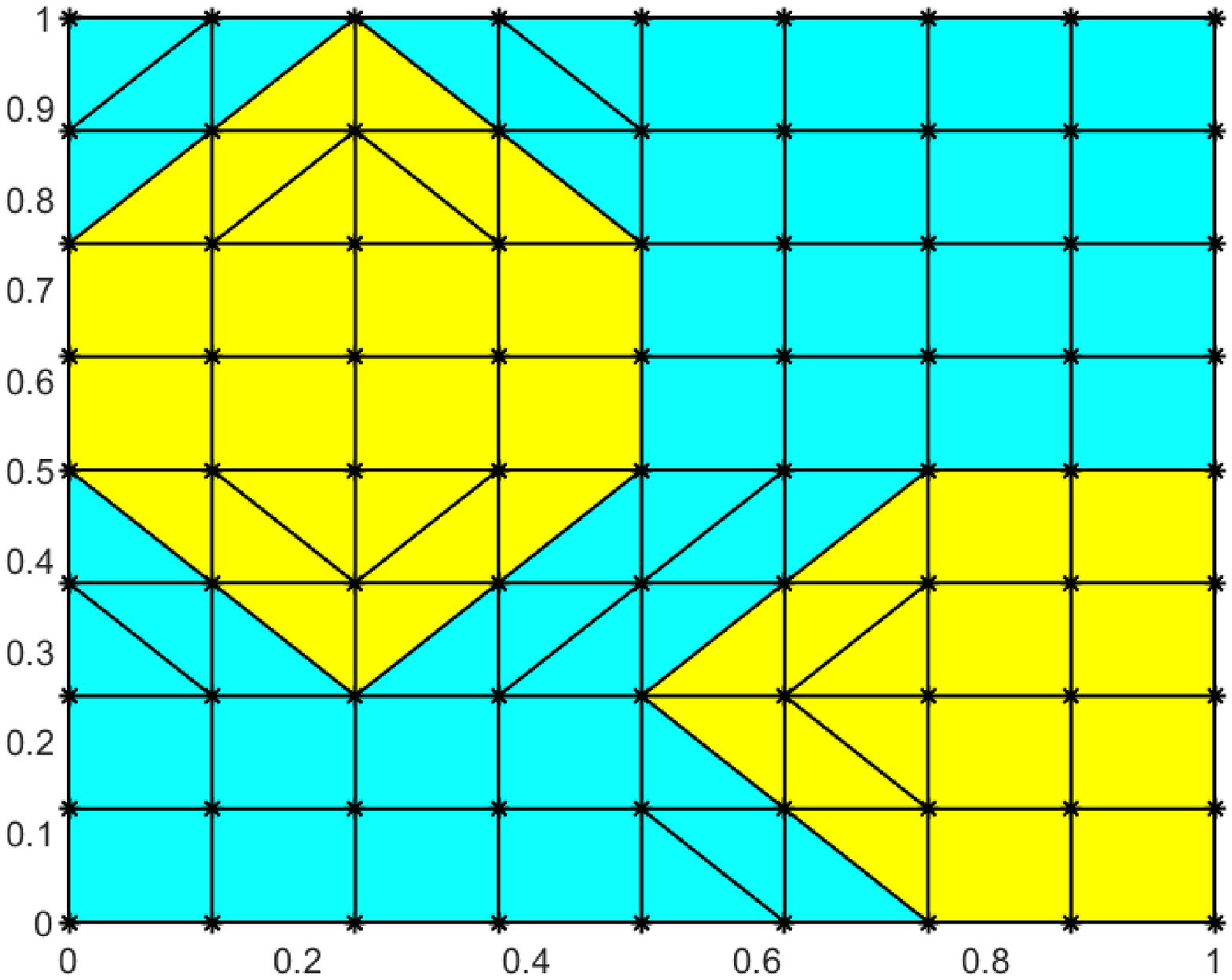}
	\end{subfigure}
	\begin{subfigure}{.5\textwidth}
		\centering
		\includegraphics[width=0.8\linewidth]{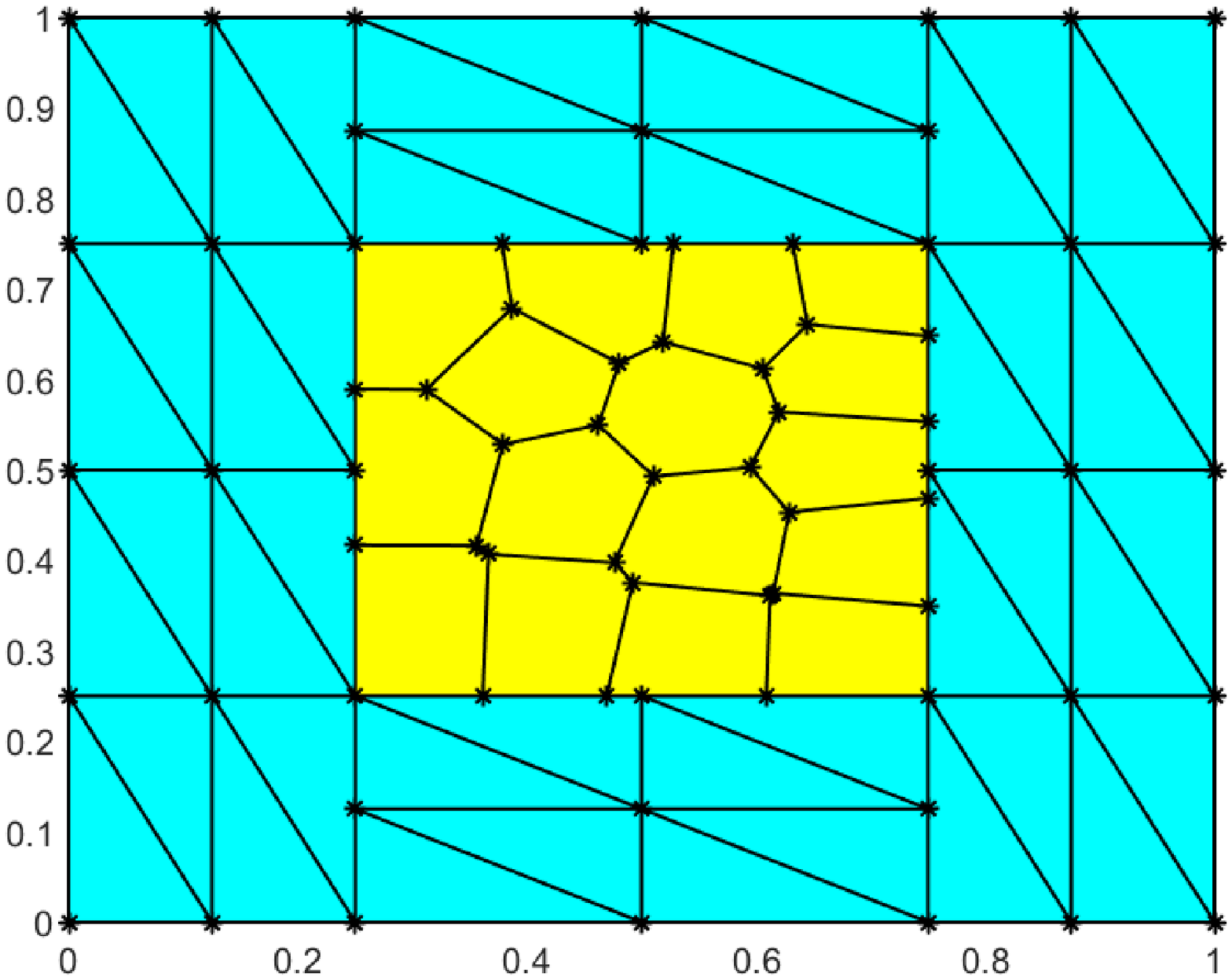}
	\end{subfigure}
	\caption{Meshing of domains in (a) Example~\ref{ex:5.4} (left) and (b) Example~\ref{ex:5.5} (right).}
	\label{fig:meshes}
\end{figure}



\begin{example}\label{ex:5.4}
 {\rm This example considers the  true solution for the forward problem as 
$$u(x,y)=\sin(\pi x) \sin(\pi y)$$ on the domain $\Omega:=[0,1]^2$,
and the true solution $f$ for the inverse problem is computed from the Poisson equation. The measurements of displacement $u$ are given in two subdomains $\omega_1$ and $\omega_2$
of $\Omega$ as shown in Figure \ref{fig:m-domains}.  
The measurement functionals $\mathbb{h}_i$ are defined as the  average of the solution $u$ on subdomains $\omega_i$ for $i=1,2$. That is,   the  measurement input $m$ is
\begin{equation} \label{eq:m_comp}
m(i)=\mathbb{h}_i(u) := \frac{1}{|\omega_i|} \int_{\omega_i} u \,\mbox{d}x \quad\text{for}\; i=1,2.
\end{equation}
In this example, the red refinement (new elements formed by joining midpoints of each old element) of the initial mesh partition is considered, and the mesh partition with $h=0.14$ is displayed in Figure~\ref{fig:meshes}(a). We determine the regularization parameter $\alpha$ as  $1.013e-05$ referring to \cite{nair2021conforming} on the finest mesh with choice of noise as $\| n\|/\|m\| = 2 \%$. Table \ref{tab:Inv_ex1} shows convergence rate $2$ for the $L^2$ error in measurement, and error with respect to a true solution is almost  constant after a few iterations. The iterative errors in $L^2$ and $H^1$ norms for the discrete solution $f_h$ converge with optimal rates (see Figure~\ref{fig:errors}(a)).
  \begin{table}[H]
	\begin{center}
		\begin{tabular}{|c|c|c|c|c|}
			\hline
   ${h}$&$\text{err}_0(m)$&$\text{err}_1(f)$&$\text{err}_1(f_h)$&$\text{err}_0(f_h)$\\
			\hline
			\hline	
         0.2800  &   0.118532   &   0.416440  & 0.439647  & 0.178973  \\
		0.1400 & 0.030081 & 0.209853 & 0.188650  & 0.043131  \\
		0.0700 & 0.007595  & 0.153089 & 0.090389 & 0.010682  \\
		0.0350 & 0.001906  & 0.137285 & 0.045747  & 0.002557  \\
		0.0175 & 0.000477  & 0.133165 & 0.021661 & 0.000533  \\
		0.0088 & 0.000119  & 0.132123 & -  & - \\
\hline
   \end{tabular}
   \end{center}
   \caption{Measurement error $m-m_h$, and source approximation error $f-f_h$ in the energy and $L^2$ norms.}
   \label{tab:Inv_ex1}
   \end{table}
   
   \begin{figure}[ht!]
	\centering
	\begin{subfigure}{.5\textwidth}
		\centering
		\includegraphics[width=1\linewidth]{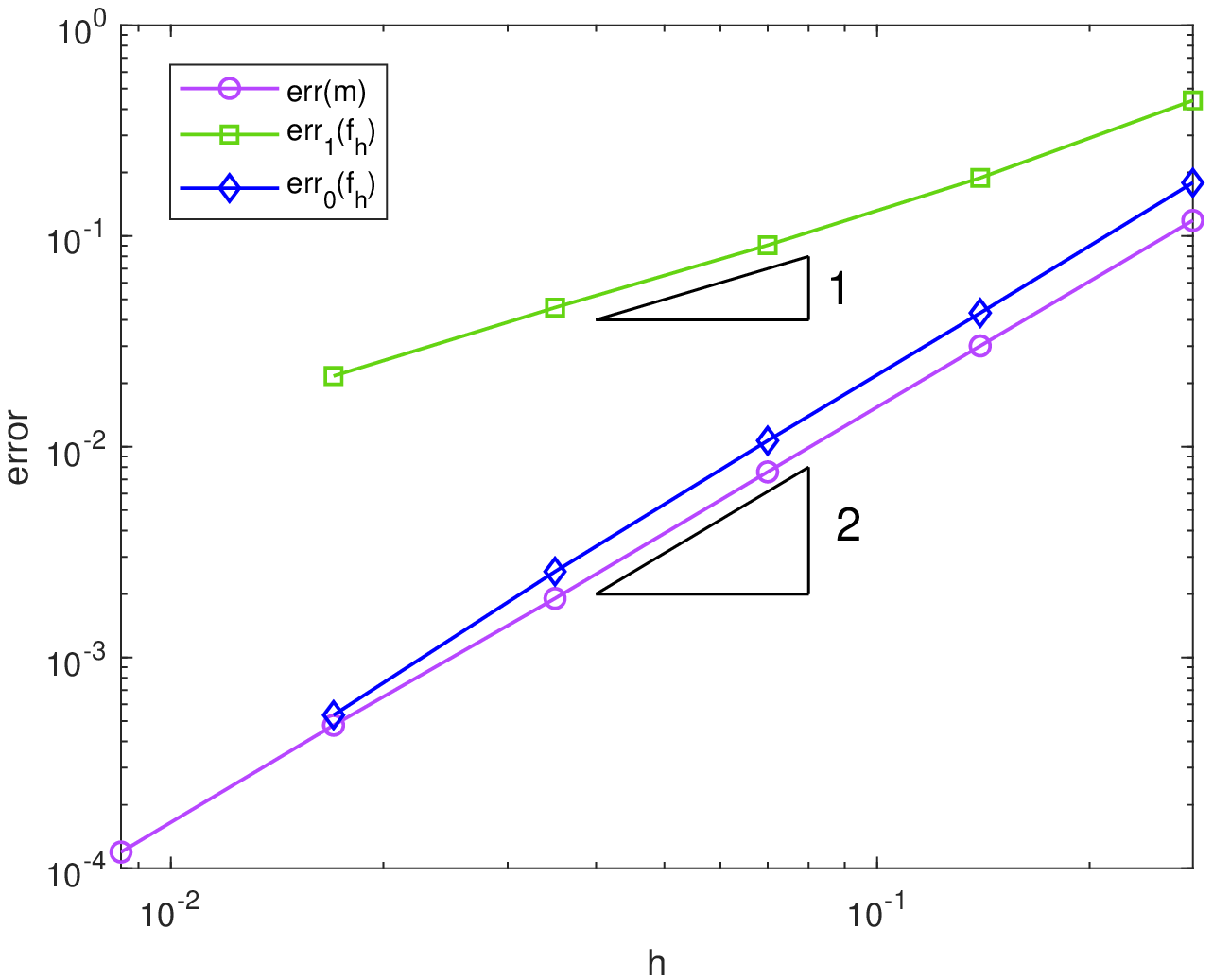}
	\end{subfigure}%
	\begin{subfigure}{.5\textwidth}
		\centering
		\includegraphics[width=1\linewidth]{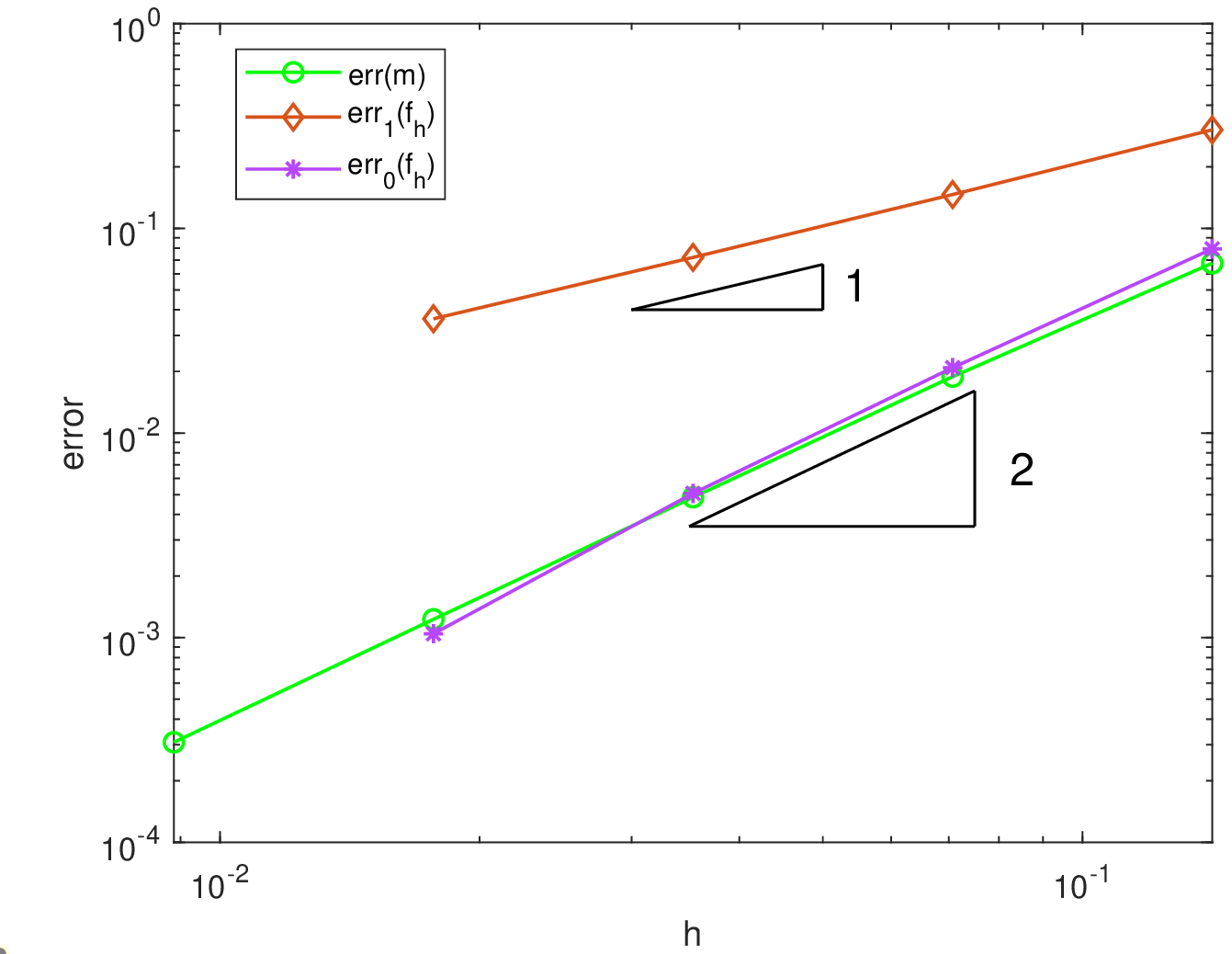}
	\end{subfigure}
		\caption{Error convergence plots for measurement $m$, and inverse solution $f_h$ in energy and $L^2$ norm in (a) Example~\ref{ex:5.4} and (b) Example~\ref{ex:5.5}.}
	\label{fig:errors}
\end{figure}}


\end{example}

\begin{example}\label{ex:5.5} 
{\rm 
The measurements of displacement $u$ are given in a subdomain $\omega:=[0.25,0.75]^2$ of the domain $\Omega$ (see Figure \ref{fig:m-domains}), and $m, m_h$ are computed from equation \eqref{eq:m_comp} {with computed regularization parameter $\alpha=1.847e-07$ at the finest mesh}. We have considered here the same true solutions as in Example~\ref{ex:5.2}, however, the triangulation {(combination of polygonal mesh for the measurement domain $\omega$ and red refinement for the rest)} of domain $\Omega$ includes the polygons. {The domain partition with mesh-size $h=0.1414$ is shown in Figure~\ref{fig:meshes}(b), the convergence results} are displayed in Table~\ref{tab:Inv_ex2} and the optimal rate of convergence in Figure~\ref{fig:errors}(b). 
 \begin{table}[H]
	\begin{center}
		\begin{tabular}{|c|c|c|c|c|}
			\hline
   ${h}$&$\text{err}_0(m)$ &$\text{err}_1(f)$ &$\text{err}_1(f_h)$&$\text{err}_0(f_h)$\\
			\hline
   \hline	
         0.1414 &  0.067571 & 3.41648 & 0.303115  & 0.079437  \\
		0.0707 & 0.018856 & 3.07953 &  0.146344  &  0.020854 \\
		0.0354 &  0.004852 & 2.98657 &  0.072144   & 0.005070  \\
		0.0177 &  0.001230 & 2.96281 &  0.036166  & 0.001045  \\
		0.0089 &  0.000308 & 2.95679 & - & -   \\
\hline
   \end{tabular}
   \end{center}
   \caption{Measurement error $m-m_h$, and source approximation error $f-f_h$ in the energy and $L^2$ norms.}
   \label{tab:Inv_ex2}
   \end{table}}


\end{example}




\subsubsection{Point Measurement}
This subsection deals with  rough measurements, in particular, the point loads and  computes the measurement error $m-m_h$ and the source approximation error $f-f_h$ in the energy and $L^2$ norm.   
\begin{example}\label{ex:5.6}
{\rm 
We consider uniform decompositions of the domain $\Omega= (0,1)^2$ into squares. Assume that the measurements of an exact solution $u$ are known at a few points (say $V_1, V_2, \dots, V_N$) in domain $\Omega$, that is, let $m=(u(V_i))_{i=1,\dots, N}$. The aim is to recover the approximate source function  with this information. Suppose that the measurement points are $V_1=(0.5,0.5), \,V_2=(0.75, 0.25), \,V_3=(0.25, 0.75),\, V_4=(0.25, 0.25),\, V_5=(0.75, 0.75),\, V_6=(0.125,0.375) $, and $V_7=(0.375, 0.375)$ as shown in Figure~\ref{fig:m-ex4}, and the exact solution $f$ is same as in Example~\ref{ex:5.4}. The square mesh are chosen such that the measurement points belong to the set of vertices, and they remain vertices in the next uniform refinements.
\begin{figure}[H]
		\centering
		\includegraphics[width=0.5\linewidth]{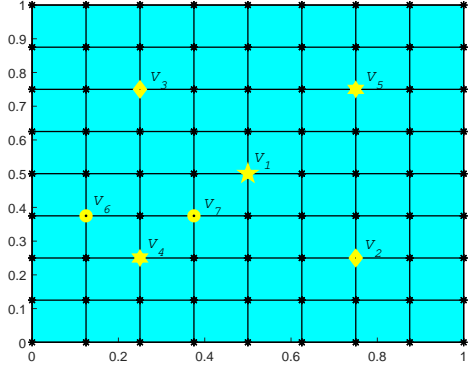}                
	\caption{The point measurements at $V_1,\dots,V_7$ in 
 a square mesh with $h=0.17678$. }
	\label{fig:m-ex4}
\end{figure}
   The numerical experiments demonstrate that the error decreases as the number of measurement points ``$N$" increases. See Table~\ref{tab:Inv_ex3}  for the values of approximate solution $f_h$ at the point $(0.5,0.5)$ with varying mesh-size $h$ and the number of measurements $N$, where the true value is  $f(0,5,0.5)=19.739209$. This is an interesting observation because we expect to recover a better approximation  $f_h$ with more information of $f$. The regularity indices in Assumptions~\ref{A1}-\ref{A3} are $s=2, r=1$ and $q=2$, and so theoretically we expect a linear order of  convergence  for $m-m_h$ from Theorem~\ref{thm:meas} and for $f-f_h$ in both  $H^1$ norm  $(p=1)$ and $L^2$ norm $(p=0)$   from Theorem~\ref{thm:dis-error}. Numerically we observe the expected rate for $f-f_h$ in the energy norm, but a better (quadratic) convergence rate  for $m-m_h$, and consequently for $f-f_h$ in the $L^2$ norm (see Figures~\ref{fig:errors_ex3}-\ref{fig:errors_ex4}).
\begin{table}[H]
	\begin{center}
		\begin{tabular}{|c|c|c|c|c|}
			\hline
   $h$ & $N=1$ & $N=3$ & $N=5$ & $N=7$
   \\	\hline
   \hline
   0.70711 & 28.526129 & 28.521174 & 28.511269 & 28.511269 \\
   0.35355 & 22.997621 & 22.188333 & 21.515906 & 21.501371 \\
   0.17678 & 22.065475  & 21.256892 & 20.547227 & 20.437930 \\
   0.08839 & 21.802501 &  20.975939 & 20.286060 & 20.170386 \\
   0.04419 & 21.722483 & 20.897970 & 20.213959 & 20.098021 \\
   0.02210 & 21.702987 & 20.876174 & 20.194335 & 20.078657 \\
\hline
   \end{tabular}
   \end{center}
   \caption{Approximate solution $f_h$ at $(0.5,0.5)$ for different mesh-sizes $h$ and number of measurement points $N$.}
   \label{tab:Inv_ex3}
   \end{table}
\begin{figure}[H]
	\centering
 \begin{subfigure}{.48\textwidth}
 \centering
 \includegraphics[width=1\linewidth]{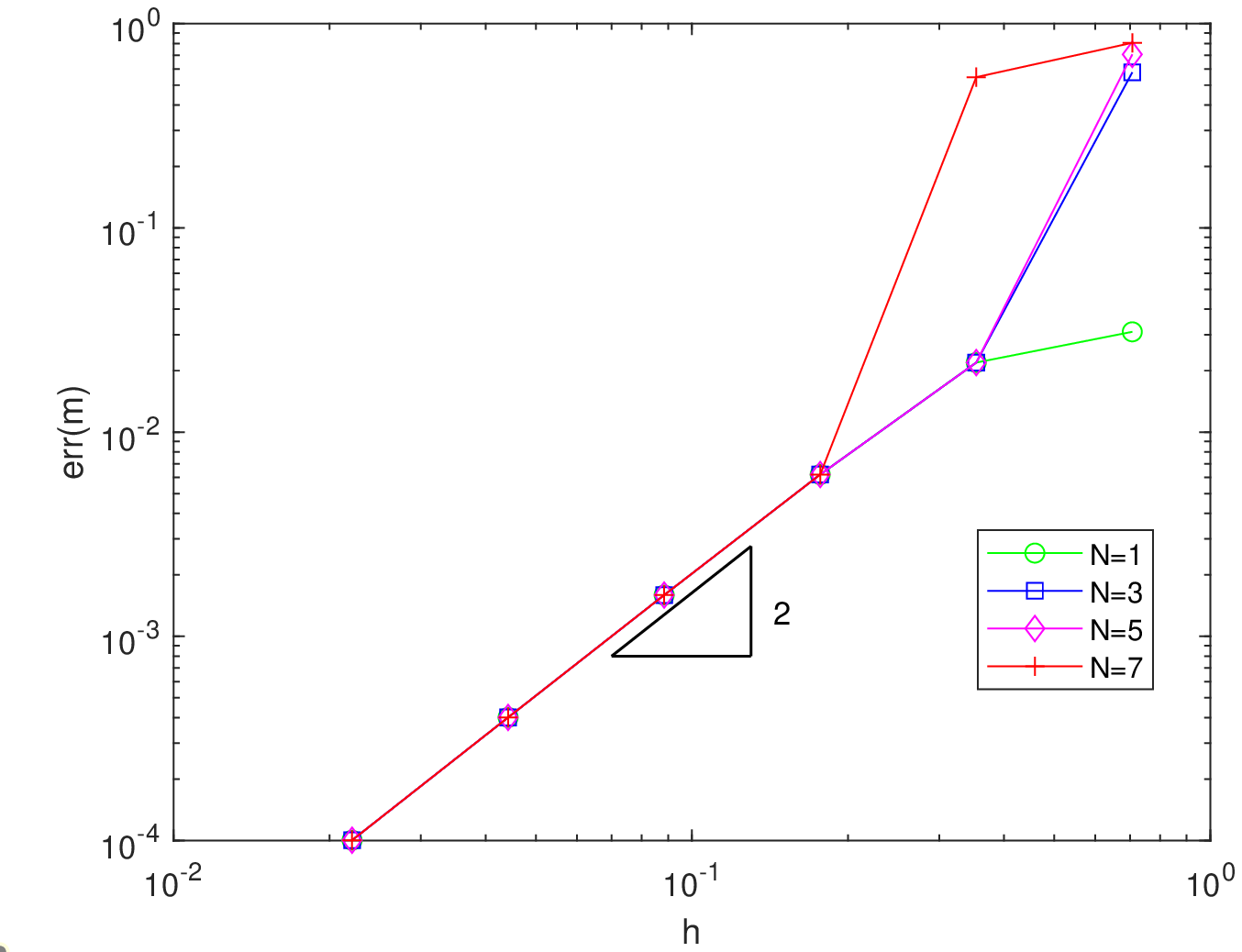} 
 \end{subfigure}
 \begin{subfigure}{.48\textwidth}
		\centering
		\includegraphics[width=1\linewidth]{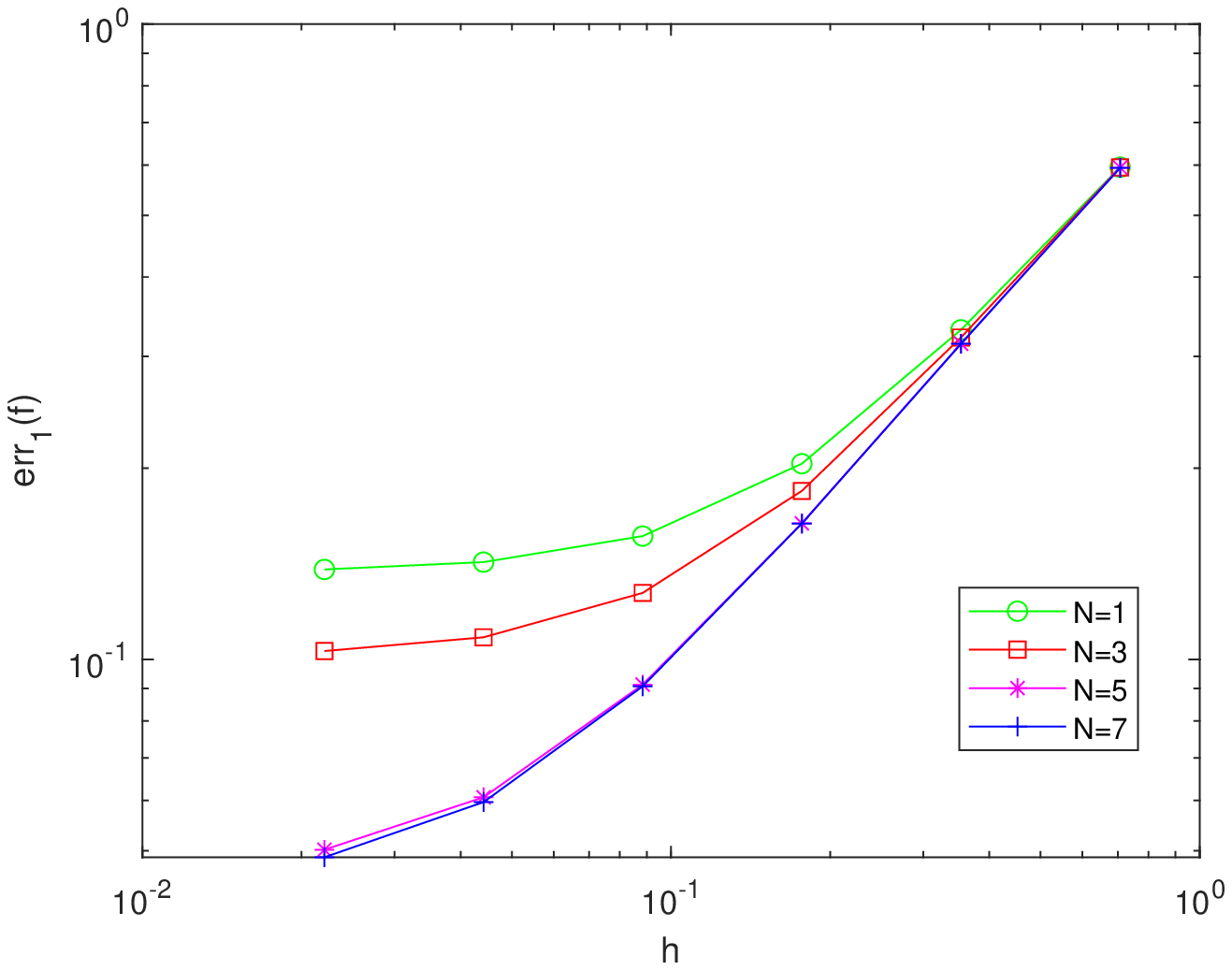}
	\end{subfigure}
 \caption{Error convergence plots of (a) relative $l^2$ error in $m$ (left) and (b) relative $H^1$ error in $f$ (right).}
 \label{fig:errors_ex3}
 \end{figure}
 \begin{figure}[H]
 \centering
	\begin{subfigure}{.48\textwidth}
		\centering
		\includegraphics[width=1\linewidth]{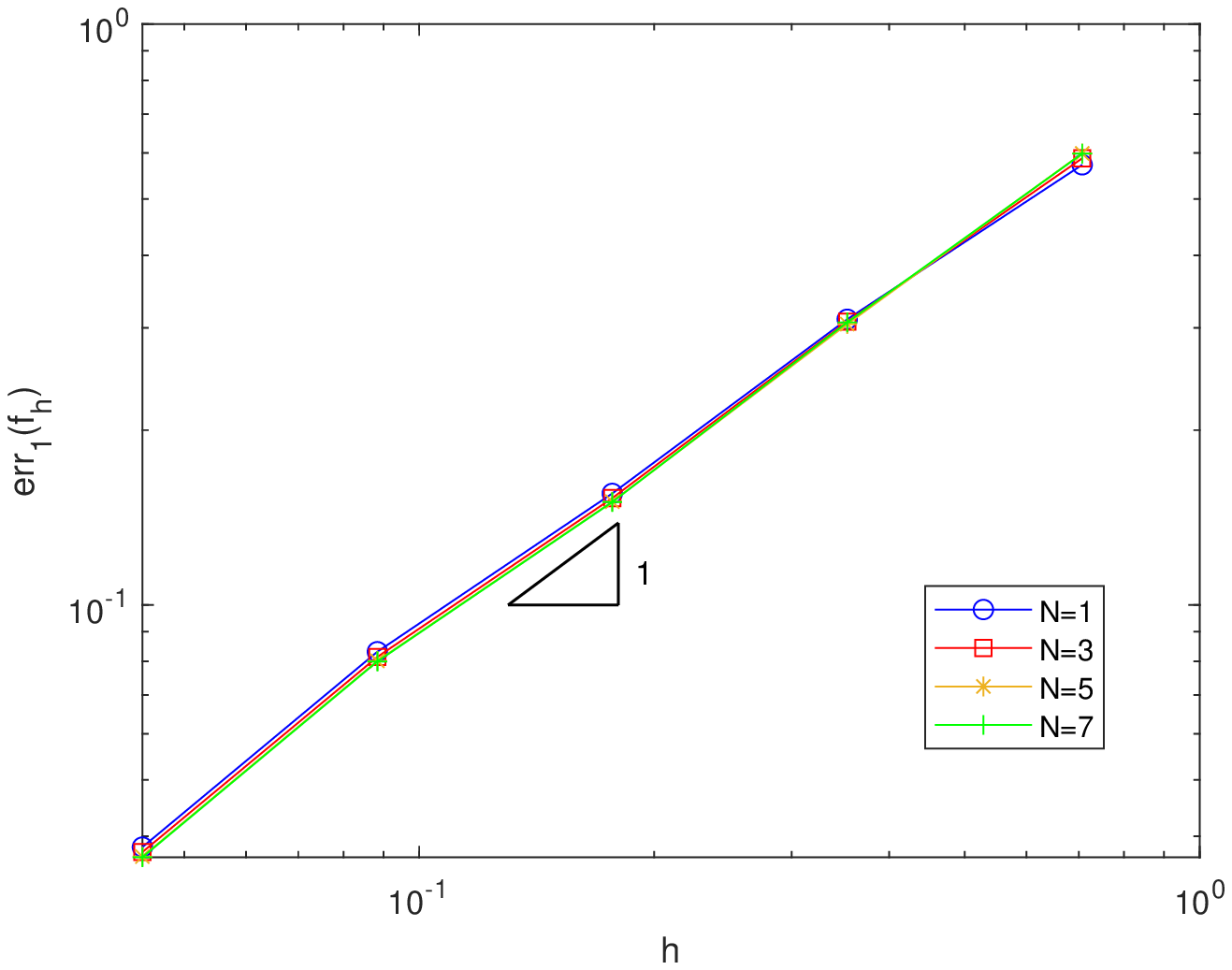}
	\end{subfigure}%
	\begin{subfigure}{.48\textwidth}
		\centering
		\includegraphics[width=1\linewidth]{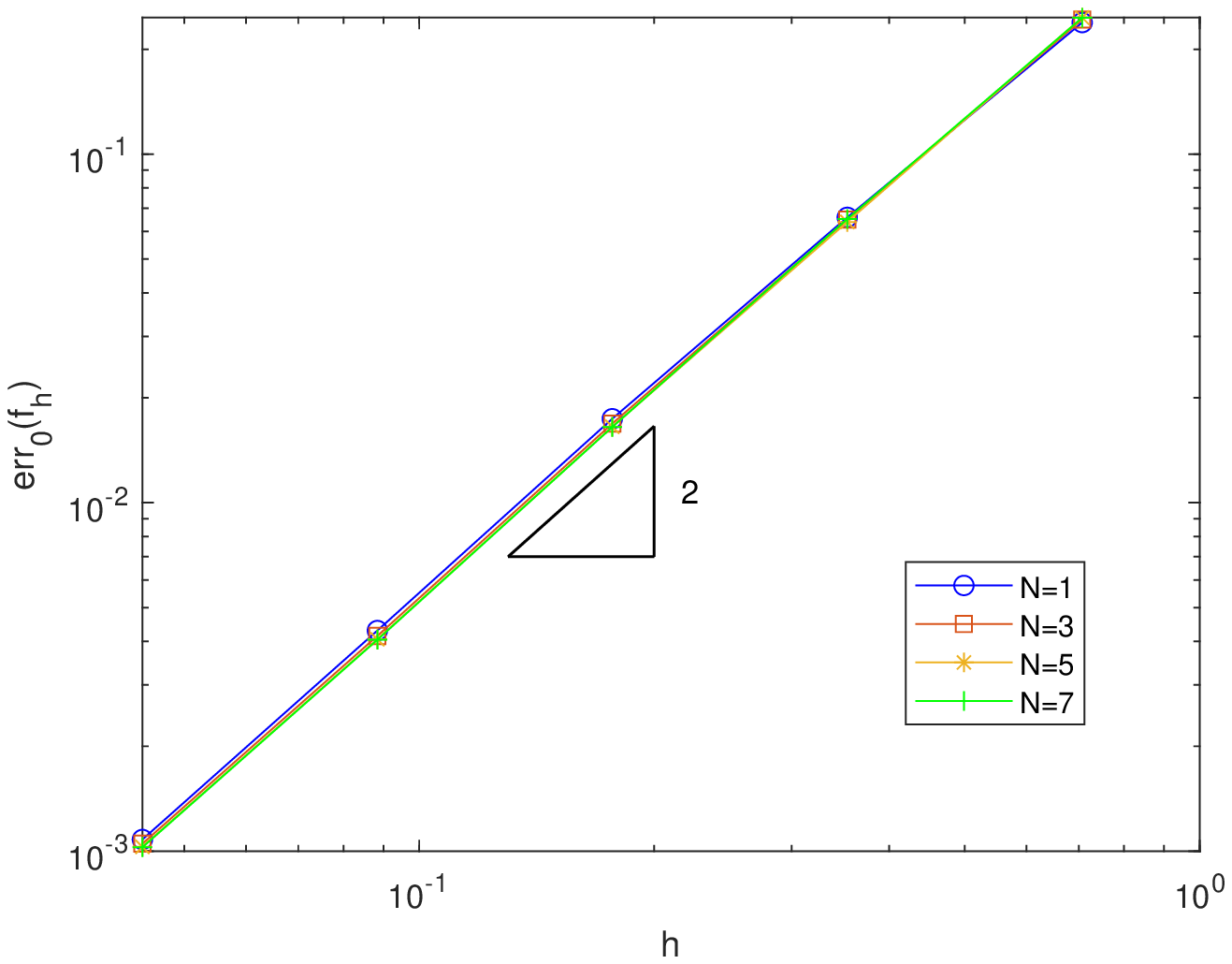} 
	\end{subfigure} 	
		\caption{Error convergence plots of (a) relative $H^1$ error in $f_h$ (left) and (b) relative $L^2$ error in $f_h$ (right).}
	\label{fig:errors_ex4}
\end{figure}
}
\end{example}


\subsection*{Conclusions}
This paper introduces the notion of  companion operators or smoothers for the  conforming VEM.  A smoother is used  to handle rough data in the discrete forward and inverse source problems. The techniques developed for the Poisson inverse source problems are different from the FEM analysis in \cite{MR3245124,nair2021conforming}.
\par The inverse source problem corresponding to  general second-order problems is challenging and the ideas in Section~4 have to be modified appropriately. For instance, Theorem~\ref{thm:meas} utilizes the symmetry of the bilinear form $a(\cdot,\cdot)$ and hence the auxiliary problem needs to be modified. Also the ideas developed in this article are fairly general and extension to higher-order problems and general boundary conditions with new companion operators is a future work. 
\subsection*{Acknowledgements}
Neela Nataraj and Nitesh Verma gratefully acknowledge the funding from the SERB POWER Fellowship SPF/2020/000019.

\bibliographystyle{amsplain}
\bibliography{references}

\providecommand{\bysame}{\leavevmode\hbox to3em{\hrulefill}\thinspace}
\providecommand{\MR}{\relax\ifhmode\unskip\space\fi MR }
\providecommand{\MRhref}[2]{%
  \href{http://www.ams.org/mathscinet-getitem?mr=#1}{#2}
}
\providecommand{\href}[2]{#2}
\begin{thebibliography}{10}

\bibitem{adak2022morley}
D.~Adak, D.~Mora, and A~Silgado, \emph{The {M}orley-type virtual element method
  for the {N}avier-{S}tokes equations in stream-function form on general
  meshes}, arXiv:2212.02173 (2022).

\bibitem{MR3073346}
B.~Ahmad, A.~Alsaedi, F.~Brezzi, L.~D. Marini, and A.~Russo, \emph{Equivalent
  projectors for virtual element methods}, Comput. Math. Appl. \textbf{66}
  (2013), no.~3, 376--391.

\bibitem{MR4070337}
P.~F. Antonietti, G.~Manzini, and M.~Verani, \emph{The conforming virtual
  element method for polyharmonic problems}, Comput. Math. Appl. \textbf{79}
  (2020), no.~7, 2021--2034.

\bibitem{di2019hybrid}
D.~Antonio Di~Pietro and J.~Droniou, \emph{The {H}ybrid {H}igh-{O}rder method
  for polytopal meshes}, Design, analysis, and applications \textbf{19} (2019).

\bibitem{MR3507277}
B.~Ayuso~de Dios, K.~Lipnikov, and G.~Manzini, \emph{The nonconforming virtual
  element method}, ESAIM Math. Model. Numer. Anal. \textbf{50} (2016), no.~3,
  879--904.

\bibitem{MR3194809}
S.~Badia, R.~Codina, T.~Gudi, and J.~Guzm\'{a}n, \emph{Error analysis of
  discontinuous {G}alerkin methods for the {S}tokes problem under minimal
  regularity}, IMA J. Numer. Anal. \textbf{34} (2014), no.~2, 800--819.

\bibitem{MR3460621}
L.~Beir\~{a}o~da Veiga, F.~Brezzi, L.~D. Marini, and A.~Russo, \emph{Virtual
  element method for general second-order elliptic problems on polygonal
  meshes}, Math. Models Methods Appl. Sci. \textbf{26} (2016), no.~4, 729--750.

\bibitem{beirao2013basic}
L.~Beir{\~a}o~da Veiga, F.~Brezzi, A.~Cangiani, G.~Manzini, L.~D. Marini, and
  A.~Russo, \emph{Basic principles of virtual element methods}, Math. Models
  Methods Appl. Sci. \textbf{23} (2013), no.~01, 199--214.

\bibitem{beirao2014hitchhiker}
L.~Beir{\~a}o~da Veiga, F.~Brezzi, L.~D. Marini, and A.~Russo, \emph{The
  {H}itchhiker's guide to the virtual element method}, Math. models methods
  appl. sci. \textbf{24} (2014), no.~08, 1541--1573.

\bibitem{da2014mimetic}
L.~Beir{\~a}o~da Veiga, K.~Lipnikov, and G.~Manzini, \emph{The mimetic finite
  difference method for elliptic problems}, vol.~11, Springer, 2014.

\bibitem{braess2007finite}
D.~Braess, \emph{Finite elements: Theory, fast solvers, and applications in
  solid mechanics}, Cambridge University Press, 2007.

\bibitem{brenner2017some}
S.~Brenner, Q.~Guan, and L.~Sung, \emph{Some estimates for virtual element
  methods}, Comput. Math. Appl. \textbf{17} (2017), no.~4, 553--574.

\bibitem{brenner2008mathematical}
S.~Brenner and L.~R. Scott, \emph{The mathematical theory of finite element
  methods}, vol.~3, Springer, 2008.

\bibitem{MR2142191}
S.~Brenner and L.~Sung, \emph{{$C^0$} interior penalty methods for fourth order
  elliptic boundary value problems on polygonal domains}, J. Sci. Comput.
  \textbf{22/23} (2005), 83--118.

\bibitem{MR3002804}
F.~Brezzi and L.~D. Marini, \emph{Virtual element methods for plate bending
  problems}, Comput. Methods Appl. Mech. Engrg. \textbf{253} (2013), 455--462.

\bibitem{cangiani2014hp}
A.~Cangiani, E.~H. Georgoulis, and P.~Houston, \emph{hp-version discontinuous
  {G}alerkin methods on polygonal and polyhedral meshes}, Math. Models Methods
  Appl. Sci. \textbf{24} (2014), no.~10, 2009--2041.

\bibitem{MR3671497}
A.~Cangiani, G.~Manzini, and O.~J. Sutton, \emph{Conforming and nonconforming
  virtual element methods for elliptic problems}, IMA J. Numer. Anal.
  \textbf{37} (2017), no.~3, 1317--1354.

\bibitem{carstensen2022nonconforming}
C.~Carstensen, R.~Khot, and A.~K. Pani, \emph{Nonconforming virtual elements
  for the biharmonic equation with {M}orley degrees of freedom on polygonal
  meshes}, arXiv:2205.08764 (2022).

\bibitem{MR4444835}
\bysame, \emph{A priori and a posteriori error analysis of the lowest-order
  {NCVEM} for second-order linear indefinite elliptic problems}, Numer. Math.
  \textbf{151} (2022), no.~3, 551--600.

\bibitem{MR4235819}
C.~Carstensen and N.~Nataraj, \emph{A priori and a posteriori error analysis of
  the {C}rouzeix-{R}aviart and {M}orley {FEM} with original and modified
  right-hand sides}, Comput. Methods Appl. Math. \textbf{21} (2021), no.~2,
  289--315.

\bibitem{ciarlet1978finite}
P.~G. Ciarlet, \emph{The finite element method for elliptic problems},
  North-Holland, 1978.

\bibitem{cockburn2009unified}
B.~Cockburn, J.~Gopalakrishnan, and R.~Lazarov, \emph{Unified hybridization of
  discontinuous galerkin, mixed, and continuous galerkin methods for second
  order elliptic problems}, SIAM J. Numer. Anal. \textbf{47} (2009), no.~2,
  1319--1365.

\bibitem{MR4167044}
A.~Ern and P.~Zanotti, \emph{A quasi-optimal variant of the hybrid high-order
  method for elliptic partial differential equations with {$H^{-1}$} loads},
  IMA J. Numer. Anal. \textbf{40} (2020), no.~4, 2163--2188.

\bibitem{MR3524926}
M.~S. Gockenbach, \emph{Linear inverse problems and {T}ikhonov regularization},
  Carus Mathematical Monographs, vol.~32, Mathematical Association of America,
  Washington, DC, 2016.

\bibitem{guermond2021finite}
J.~Guermond and A.~Ern, \emph{Finite elements ii: Galerkin approximation,
  elliptic and mixed pdes}, Springer, 2021.

\bibitem{huang2021medius}
J.~Huang and Y.~Yu, \emph{A medius error analysis for nonconforming virtual
  element methods for {P}oisson and biharmonic equations}, J. Comput. Appl.
  Math. \textbf{386} (2021), 113229.

\bibitem{MR3245124}
A.~Huhtala, S.~Bossuyt, and A.~Hannukainen, \emph{{\it {A} priori} error
  estimate of the finite element solution to a {P}oisson inverse source
  problem}, Inverse Problems \textbf{30} (2014), no.~8, 085007, 25.

\bibitem{MR4399830}
S.~Mondal and M.~T. Nair, \emph{Identification of matrix diffusion coefficient
  in a parabolic {PDE}}, Comput. Methods Appl. Math. \textbf{22} (2022), no.~2,
  413--441.

\bibitem{nair2017linear}
M.~T. Nair and S.~D. Roy, \emph{A linear regularization method for a nonlinear
  parameter identification problem}, Journal of Inverse and Ill-posed Problems
  \textbf{25} (2017), no.~6, 687--701.

\bibitem{nair2021conforming}
M.~T. Nair and D.~Shylaja, \emph{Conforming and nonconforming finite element
  methods for biharmonic inverse source problem}, Inverse Problems \textbf{38}
  (2021), no.~2, 025001.

\bibitem{talischi2010polygonal}
C.~Talischi, G.~H. Paulino, A.~Pereira, and I.~FM Menezes, \emph{Polygonal
  finite elements for topology optimization: A unifying paradigm}, Int. J.
  Numer. Methods Eng. \textbf{82} (2010), no.~6, 671--698.

\bibitem{MR3907927}
A.~Veeser and P.~Zanotti, \emph{Quasi-optimal nonconforming methods for
  symmetric elliptic problems. {II}---{O}verconsistency and classical
  nonconforming elements}, SIAM J. Numer. Anal. \textbf{57} (2019), no.~1,
  266--292.

\end{thebibliography}
\end{document}